\documentclass[11pt,oneside]{amsart}
\usepackage{pinlabel}
\usepackage{amsmath, amsthm, amssymb, amscd, mathrsfs, eucal}
\usepackage{enumerate}
\usepackage[T1]{fontenc}
\usepackage{hyperref}
\usepackage{url}
\usepackage{float}
\usepackage{mathtools}
\usepackage{color}
\usepackage{geometry}
\usepackage{graphicx}

\newtheorem{theorem}{Theorem}[section]
\newtheorem{lemma}[theorem]{Lemma}
\newtheorem{corollary}[theorem]{Corollary}

\newtheorem{proposition}[theorem]{Proposition}
\newtheorem{question}[theorem]{Question}

\newtheorem*{claim*}{Claim}

\theoremstyle{definition}
\newtheorem{example}[theorem]{Example}
\newtheorem{definition}[theorem]{Definition}
\newtheorem{construction}[theorem]{Construction}

\theoremstyle{theorem}
\newtheorem{theoremA}{Theorem}

%
%

\theoremstyle{remark}
\newtheorem{remark}[theorem]{Remark}

\usepackage{color}
\usepackage{pdfcolmk}

\newcommand{\rom}[1]{\text{\uppercase\expandafter{\romannumeral #1\relax}}}


\newcounter{lcomments}

\newcounter{ycomments}

\newcounter{dcomments}

\newcommand{\inj}{\hookrightarrow}

\def\Z{\mathbb Z}
\def\N{\mathbb N}
\def\R{\mathbb R}

\def\Q{\mathbb Q}

\def\defeq{\vcentcolon=}

\def\sc{\mathrm{sc}}
\def\lamhat{\hat{\lambda}}
\def\sign{\mathrm{sign}}

\makeatletter
\newsavebox{\@brx}
\newcommand{\llangle}[1][]{\savebox{\@brx}{\(\m@th{#1\langle}\)}%
	\mathopen{\copy\@brx\kern-0.5\wd\@brx\usebox{\@brx}}}
\newcommand{\rrangle}[1][]{\savebox{\@brx}{\(\m@th{#1\rangle}\)}%
	\mathclose{\copy\@brx\kern-0.5\wd\@brx\usebox{\@brx}}}
\makeatother

\numberwithin{equation}{section}

\usepackage{pdfpages}

\title{The Wiegold problem and free products of left-orderable groups}
\author{Lvzhou Chen}
\address{Department of Mathematics\\ Purdue University\\ West Lafayette, Indiana, USA}
\email[L.~Chen]{lvzhou@purdue.edu}

\author{Yash Lodha}
\address{Department of Mathematics\\ Purdue University\\ West Lafayette, Indiana, USA}
\email[Y.~Lodha]{ylodha@purdue.edu}


\begin{document}

\begin{abstract}
    A group has normal rank (or weight) greater than one if no single element normally generates the group.
    The Wiegold problem from $1976$ asks about the existence of a finitely generated perfect group of normal rank greater than one. 
	We show that any free product of nontrivial left-orderable groups has normal rank greater than one.
	This solves the Wiegold problem by taking free products of finitely generated perfect left-orderable groups, a plethora of which are known to exist.
	We obtain our estimate of normal rank by a topological argument, proving a type of spectral gap property for an unsigned version of stable commutator length. A key ingredient in the proof is an intricate new construction of a family of left-orders on free products of two left-orderable groups. 
\end{abstract}

%

\maketitle

\section{Introduction}\label{sec: intro}
A fundamental concept in group theory is 
the \emph{normal rank} (or \emph{weight}) of a group, which is the smallest cardinality of a set of elements that normally generates the group. 
One obtains a lower bound by counting the minimal number of factors in the abelianization, viewed as a direct sum.
Providing other estimates from below has proved to be notoriously difficult.
Such estimates are also of interest in $3$-manifold topology, as the normal rank of $\pi_1(M)$ is a lower bound of the so-called Dehn surgery number of $M^3$; see Section~\ref{subsec: connections}.

A longstanding open problem posed by Wiegold in $1976$ (Problem FP14 in \cite{MR1921705}, Problem $5.52$ in \cite{Kourovka}), asks if there exist finitely generated perfect groups with normal rank greater than one, for which we give a positive answer:
\begin{theoremA}\label{thmA: Wiegold}
    There exist finitely generated (even finitely presented) perfect groups that have normal rank greater than $1$. 
\end{theoremA}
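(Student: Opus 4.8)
The plan is to obtain Theorem~\ref{thmA: Wiegold} as a corollary of the Main Theorem stated in the abstract --- that every free product of two or more nontrivial left-orderable groups has normal rank greater than $1$ --- together with the known existence of finitely presented perfect left-orderable groups. Granting these, the deduction is short. Fix a nontrivial, finitely presented, perfect, left-orderable group $A$; such groups are known to exist (for instance one may take one of the finitely presented left-orderable \emph{simple} groups produced in the literature, which are perfect because they are simple and nontrivial). Put $G \defeq A * A$. Then $G$ is finitely presented, since concatenating two copies of a finite presentation of $A$ gives a finite presentation of $G$; $G$ is perfect, since $H_1(G;\Z) \cong H_1(A;\Z) \oplus H_1(A;\Z) = 0$; and $G$ is a free product of two nontrivial left-orderable groups. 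By the Main Theorem, $G$ has normal rank greater than $1$, which is Theorem~\ref{thmA: Wiegold}. (Taking $A * B$ with $A \not\cong B$ both as above produces further, non-isomorphic examples.)

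Thus the entire problem is pushed into the Main Theorem, and that is where I would concentrate the effort. First, two harmless reductions. It suffices to treat $G = A * B$ with $A, B$ nontrivial left-orderable: an arbitrary free product of $\geq 2$ nontrivial left-orderable factors retracts onto such a $G$ (collapse all but two factors), and normal rank cannot decrease under a retraction, because the image of a single normal generator would normally generate the retract, while the two surviving factors, being subgroups of the original, remain left-orderable. Second, it suffices to handle $g$ that is not conjugate into $A$ or into $B$: if $g$ is conjugate into, say, $A$, then $\llangle g \rrangle \le \llangle A \rrangle$, and $\llangle A \rrangle \neq G$ because $G / \llangle A \rrangle \cong B \neq 1$, so $g$ does not normally generate $G$ --- and here no left-orderability is used.

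For the remaining case --- $g \in G = A * B$ not conjugate into a factor --- the strategy is to exhibit a quantitative obstruction to $\llangle g \rrangle = G$. Suppose $g$ did normally generate $G$; then every $h \in G$ is a product of conjugates of $g^{\pm 1}$, and, writing $\nu(h)$ for the least number of such conjugates needed, one has $\nu(h_1 h_2) \leq \nu(h_1) + \nu(h_2)$ and $\nu(t h t^{-1}) = \nu(h)$. The point is to play this against an \emph{unsigned} version of stable commutator length: since conjugates of $g$ and of $g^{-1}$ both contribute to $\nu$ with the same weight (no cancellation), $\nu$ is bounded below in terms of this unsigned scl, and the heart of the argument is a \emph{spectral gap} --- a uniform positive lower bound for the unsigned scl on all elements of $A * B$ not conjugate into a factor. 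The relations expressing suitable elements of $G$ as bounded products of conjugates of $g^{\pm 1}$ are then incompatible with this positive lower bound (concretely, one applies the bound to well-chosen elements manufactured from those relations), and the contradiction follows.

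The genuine obstacle is establishing that spectral gap, and the tool is topological/dynamical: construct a rich family of left-orders on $A * B$ that restrict to prescribed left-orders on $A$ and on $B$, convert each such order into an action of $G$ on the line, and extract from these actions the (quasimorphism-like) ``certificates'' witnessing the unsigned-scl lower bound --- with the extra subtlety that the certificates must be insensitive to the sign of $g$, matching the unsigned count in $\nu$. Producing this family of left-orders with enough flexibility and control --- the ``intricate new construction'' flagged in the abstract --- is the crux; by comparison the surface/counting side of the gap is a variant of existing free-product scl technology (in the spirit of the gap $\scl \geq 1/2$ for elements of free products not conjugate into a factor), and, as already noted, the passage from the Main Theorem to Theorem~\ref{thmA: Wiegold} is routine.
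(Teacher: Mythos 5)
Your deduction of Theorem~\ref{thmA: Wiegold} from the Main Theorem is exactly the paper's: take a finitely presented perfect left-orderable group $A$, form $G = A\star A$ (or $A\star B$), and note that $G$ is finitely presented, perfect (since $H_1(G)\cong H_1(A)\oplus H_1(A)=0$), and a free product of two nontrivial left-orderable groups; your preliminary reductions to two factors and to $w$ not conjugate into a factor also match the paper. Since you devote the rest of the proposal to sketching the Main Theorem, one caution is worth recording. You propose to play a conjugation-invariant, subadditive count $\nu$ (minimal number of conjugates of $g^{\pm1}$) against a spectral gap for an unsigned scl ``on all elements of $A\star B$ not conjugate into a factor,'' but the element one must examine in $\llangle w\rrangle$ is a nontrivial $a\in A$, which \emph{is} in a factor, so the gap as phrased never applies to it, and the sketch does not say which ``well-chosen elements manufactured from the relations'' would violate it. The mechanism in the paper is different and essential: the gap (Theorem~\ref{thmA: spectral gap theorem}) is proved for boundary-incompressible $w$-admissible surfaces that are \emph{allowed extra boundary components landing in a free factor}. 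From a minimal-length equation $a=\prod_{i=1}^{k}g_i w^{n_i}g_i^{-1}$ one builds such a surface $S$ of genus zero with $k+1$ boundary components, so $-\chi(S)=k-1$, while $\deg(S)=\sum|n_i|\geq k$; this flatly contradicts $-\chi(S)\geq\deg(S)$. Moreover, the dynamical input (a relative stacking coming from an engineered action on $\R$) supplies a combinatorial labeling of junctures used to count sign changes on disk- and annulus-pieces and bound $-\chi(S)$ from below; it is not packaged as a quasimorphism certificate. The relative, extra-boundary form of the spectral gap is the ingredient your outline of the Main Theorem is missing.
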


There has been remarkably little progress on the problem since it was posed, besides Wiegold's elementary proof that finite perfect groups have normal rank $1$. There are many candidates of such examples, such as free products of finitely generated torsion-free perfect groups. However, proving the lower bound on the normal rank remained intractable.

We prove the desired bound for free products of \emph{left-orderable} groups, and thus provide a family of examples for Theorem~\ref{thmA: Wiegold}.
A group $G$ is left-orderable if it admits a total order $<$ such that $f<g$ implies that $hf<hg$ for all $f,g,h\in G$.
For countable groups, this is equivalent to admitting an embedding in $\textup{Homeo}^+(\R)$ \cite{GOD}.
The landscape of left-orderable groups is vast and includes free and surface groups (more generally, locally indicable groups \cite{BurnsHale}), braid groups, and virtually the fundamental groups of closed hyperbolic $3$-manifolds \cite{Agol}. 
Finitely generated perfect left-orderable groups are plentiful with examples going back to the $1980$s \cite{GhysSergiescu}. 
See \cite{GOD} for a comprehensive reference on left-orderable groups.

Here is the general result we prove, where $\llangle w \rrangle$ is the smallest normal subgroup of $G$ containing $w$.
\begin{theoremA}\label{thmA: main}
    Any free product $G=A\star B$ of nontrivial left-orderable groups $A$ and $B$ has normal rank greater than $1$. More precisely, for any $w\in G$ not conjugate into $A$, the natural map $A\inj A\star B$ induces an \emph{injection} $A\inj (A\star B)/\llangle w \rrangle$.
\end{theoremA}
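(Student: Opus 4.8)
The kernel of the composite $A\to A\star B\to(A\star B)/\llangle w\rrangle$ is precisely $A\cap\llangle w\rrangle$, so the plan is to show this intersection is trivial. Suppose not, and pick $a\in A\cap\llangle w\rrangle$ with $a\neq 1$. Left-orderable groups are torsion-free, so $a$ has infinite order; after fixing a left-order on $A$ and replacing $a$ by $a^{-1}$ if necessary, we may assume $a>1$ in $A$. Restricting attention to the countable subgroup generated by $a$, $w$, and the conjugators witnessing $a\in\llangle w\rrangle$ reduces us to the case in which $A$ and $B$ are countable, so that their left-orders are realized by faithful actions on $\R$.

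I would then translate the relation $a=\prod_{i=1}^{n}g_iw^{\epsilon_i}g_i^{-1}$, with $n$ chosen minimal, into the topological language standard for stable commutator length in free products. It produces an admissible surface: a map $f\colon S\to X$, which we may take to be taut, where $X=K(A,1)\vee K(B,1)$ is a $K(A\star B,1)$ and $S$ is a planar surface (a disk with $n$ open subdisks removed), with outer boundary representing $a$ and the $n$ inner boundary circles representing conjugates of $w^{\epsilon_i}$. After making $f$ transverse to the wedge point $\ast$, the $1$-manifold $f^{-1}(\ast)$ cuts $S$ into $A$-pieces mapping to $K(A,1)$ and $B$-pieces mapping to $K(B,1)$. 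Since $w$ is not conjugate into $A$, its cyclic normal form has a nonempty $B$-part, so every inner boundary circle passes through a $B$-piece, whereas the outer boundary circle, labelled by $a\in A$, sits inside a single $A$-piece. Concatenating with the analogous surface built from an expression for $a^{-1}$ and capping off the resulting nullhomotopic outer boundary turns this into an admissible surface whose boundary runs only over copies of $w^{\pm1}$.

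The crux, and the step I expect to be the main obstacle by a wide margin, is to combine this surface decomposition with the new family of left-orders on $A\star B$ to force a contradiction, packaged as a spectral gap: a uniform positive lower bound for an unsigned version of the stable commutator length of $w$ in $A\star B$, which the surface just produced would violate. From the chosen left-order on $A$ (together with an auxiliary left-order on $B$) I would build a family of left-orders on $A\star B$, all restricting to the given one on $A$, rich enough that averaging a suitable order-theoretic winding invariant over the family assigns to each conjugate $g_iw^{\epsilon_i}g_i^{-1}$ a controlled, balanced value while still recording that $a>1$. Evaluating the normal-form decomposition of $S$ through these orders, each $A$-piece and each $B$-piece contributes a linear constraint reflecting left-orderability of $A$, respectively of $B$ --- a ``the boundary data winds back to zero'' condition --- and the obligatory $B$-part of $w$ keeps the $B$-pieces from being vacuous. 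Summing these constraints over all pieces yields the asserted positive lower bound on the unsigned stable commutator length of $w$; but the admissible surface produced above has too small a complexity to be consistent with it, the contradiction we seek. Making this precise requires left-orders on $A\star B$ that are simultaneously compatible with a prescribed order on $A$, uniformly sensitive to the non-$A$-part of $w$, and additive enough to be read off piece by piece on the surface; this is exactly the intricate construction flagged in the abstract, and I expect it to absorb most of the work.

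Finally, the contradiction forces $a=1$, so $A\cap\llangle w\rrangle=\{1\}$ and $A\inj(A\star B)/\llangle w\rrangle$. For Theorem~\ref{thmA: Wiegold}, take $A$ and $B$ to be finitely presented perfect left-orderable groups, which are available by \cite{GhysSergiescu} and later constructions: then $A\star B$ is finitely presented and perfect, and any $w\in A\star B$ is either conjugate into one factor --- so that $\llangle w\rrangle$ omits the other factor --- or not conjugate into $A$ (or not into $B$), in which case the theorem embeds that factor nontrivially into the quotient; either way $\llangle w\rrangle\neq A\star B$, so $A\star B$ has normal rank greater than $1$.
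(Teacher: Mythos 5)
Your topological setup matches the paper's: the reduction to $A\cap\llangle w\rrangle=\{1\}$, the planar surface with $-\chi(S)=k-1$ and unsigned degree $\sum|n_i|\ge k$ built from a minimal-length expression, and the decomposition along $f^{-1}(\star)$ into $A$-pieces and $B$-pieces. The intermediate step of doubling along the $a$-boundary to get a surface "whose boundary runs only over copies of $w^{\pm1}$" is unnecessary and risky: the paper's notion of $w$-admissible surface \emph{relative to} $\{A,B\}$ already permits the $a$-boundary, whereas doubling can destroy the boundary-incompressibility that you established by minimality of $k$. Keep the $a$-boundary.

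The genuine gap, which you candidly flag, is in the crux. The mechanism you propose --- constructing a rich family of left-orders on $A\star B$ compatible with a prescribed order on $A$ and then \emph{averaging} an order-theoretic winding invariant over the family --- is not what the paper does and is not worked out. The paper instead builds a single object: a \emph{relative stacking}, i.e.\ one (not necessarily faithful) action $\sigma\colon A\star B\to\textup{Homeo}^+(\R)$ and one point $x\in\R$ whose trajectory under the nonempty prefixes of the cyclically reduced $w$ consists of $|w|$ \emph{distinct} points and closes up ($x\cdot\sigma(w)=x$). This gives a labeling $\lambda$ of the junctures of $S^1_w$ by real numbers. The spectral gap $-\chi(S)\ge\deg(S)$ then follows from a purely combinatorial count: orient each arc of $f^{-1}(\star)$ by the sign of the $\lambda$-difference at its endpoints (boundary-incompressibility makes this well defined), observe that each disk-piece has at least two sign changes (a monotonicity contradiction using that letters act by orientation-preserving homeomorphisms and that a disk-piece has trivial winding class), and that each degree on a $w$-boundary forces at least two inconsistent segments (via the unique maximum and minimum of $\lambda$ on $J_w$). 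No averaging over orders appears; a family of actions enters only in \emph{constructing} the single $\sigma$, by solving finitely many equation/inequation systems over $A\star B$ and combining the solutions through a blow-up and diagonal-product device. Moreover, the relative stacking exists \emph{exactly} when $w$ is not a proper power, and that hypothesis is where the argument reduces the general case. As written, your averaging scheme is a direction rather than a proof; you would need to replace it either with the relative-stacking construction or with a genuinely new invariant whose additivity over pieces you can verify.
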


We remark that the same estimate of normal rank holds for free products of groups, where each free factor admits a nontrivial left-orderable quotient.

Theorem~\ref{thmA: Wiegold} is a corollary of Theorem~\ref{thmA: main} by taking $A$ and $B$ to be finitely generated perfect left-orderable groups. Some prominent examples of finitely presentable perfect left-orderable groups include: the commutator subgroup of the braid group $B_n$ for $n\geq 5$ \cite{Dehornoy,GOD}, a central extension of Thompson's group $T$ \cite{GhysSergiescu} and a central extension of the $(2,3,7)$-triangle group \cite{GOD}, and Higman's group \cite{RivasTriestino}. 
There are also many $\Z$-homology $3$-spheres with left-orderable fundamental groups \cite{LSpace,CullerDunfield}.
The second author with Hyde constructed the first examples of finitely generated simple left-orderable groups \cite{HydeLodha}, and even finitely presentable ones \cite{HydeLodhaFP}. The former family contains continuum many groups (up to isomorphism).

One can also build other examples for the Wiegold problem based on the ones from Theorem~\ref{thmA: main}, so that the group has other properties: freely indecomposable, linear, residually finite, and hyperbolic; see Theorem~\ref{thmA: Dehn surgery}.

\subsection{Connections to other problems and results}\label{subsec: connections}
We now explain how our results connect to other problems, and compare them with previously known results. The method of proof for Theorem~\ref{thmA: main} will be explained in Section~\ref{sec: structure of proof}.

Most known approaches towards the Wiegold problem involve showing that a free product $G=\star_{i\in I} A_i$ of nontrivial groups $A_i$ has normal rank greater than $1$, under suitable assumptions.

For $|I|\ge3$, an unsolved conjecture of Cameron Gordon \cite[Conjecture 9.5]{Gordon:DehnSurg} predicts that such a free product always has normal rank greater than $1$. When the factors are finite cyclic groups, this is the Scott--Wiegold conjecture, confirmed by James Howie \cite{Howie_cyclic}. It seems tempting to make this work for a free product of three (even finite) perfect groups, but no one has succeeded.

When $|I|=2$, it is well-known that torsion can enable a free product $G=A\star B$ to have normal rank $1$. For example, $G/\llangle w\rrangle$ is trivial for $w=ab$ in $G=(\Z/2)\star(\Z/3)$, where $a$ and $b$ are generators of $\Z/2$ and $\Z/3$ respectively. The same argument works when $A,B$ are simple groups containing torsion elements of coprime orders. So some assumptions should be imposed on $A$ and $B$ for $G$ to have normal rank greater than $1$.

Since the early $1980$s, Theorem~\ref{thmA: main} was known to be true under the much stronger assumption that each free factor is \emph{locally indicable}, meaning that every nontrivial finitely generated subgroup surjects onto $\Z$; see the independent work of Brodski\u{\i} \cite{Brodskii}, Howie \cite{Howie_LocIndFrei} and Short \cite{Short}. However, finitely generated locally indicable groups can never be perfect.
Theorem~\ref{thmA: main} is also suspected to hold under the weaker assumption that each free factor is torsion-free; see Kirby's ($1970$s) problem list \cite[Problem 66]{Kirby:oldlist} contributed by Freedman. 
Such generalizations are challenging due to the lack of structures.
Local indicability implies left-orderability, which in turn implies torsion-freeness.

If one takes $B=\Z$ in Theorem~\ref{thmA: main}, it becomes easier to show that $G$ has normal rank greater than one, but there is a surjection $p:G=A\star\Z\to \Z$, and so $G$ is not perfect.
The Howie (or Kervaire--Laudenbach) conjecture \cite{Howie_LocIndFrei} asserts that Theorem~\ref{thmA: main} holds for $G=A\star\Z$ with any nontrivial group $A$ whenever $p(w)\neq0$. This has been confirmed when $A$ is residually finite (by Gerstenhaber--Rothaus \cite{GerstenhaberRothaus}), hyperlinear (by Pestov \cite{Pestov}), or torsion-free\footnote{under the stronger assumption $p(w)=\pm 1$} (by Klyachko \cite{Klyachko} and a recent new proof by the first author \cite{Chen:Kervaire}).
Actually, if $A$ is torsion-free, the Levin conjecture \cite{Levin} asserts that the result holds with the weaker assumption that $w$ is not conjugate into $A$. Theorem~\ref{thmA: main} confirms this for $A$ left-orderable since $B=\Z$.

Considerable interest in the notion of normal rank comes from $3$-manifold topology. The Lickorish--Wallace theorem \cite{Wallace,Lickorish} shows that any closed, orientable, connected $3$-manifold $M$ is the result of a Dehn surgery on some $n$-component link $L$ in $S^3$, and the minimal number $n$, called the \emph{Dehn surgery number}, has been considered as a complexity of $M$. There are many articles studying this \cite{surg1,surg2,HomLidman,surg4}, but it is very difficult to give lower bounds. In fact, no example of Dehn surgery number greater than two is known\footnote{Ali Daemi and Mike Miller Eismeier claimed the existence of such examples using gauge theory methods, but their work has not appeared yet.}; see the recent work \cite{LiuPiccirillo} for a more detailed summary of the relevant work.
The normal rank of $\pi_1(M)$ is a natural lower bound of the Dehn surgery number, since the fundamental group of $S^3\setminus L$ has normal rank at most $n$, and so is the fundamental group of $M$ if it is obtained from a Dehn surgery on $S^3\setminus L$ since $\pi_1(M)$ is a quotient of $\pi_1(S^3\setminus L)$. Actually, a finitely generated group has normal rank $1$ if and only if it is the quotient of some knot group \cite{Gonzalez,Johnson}.

In this view, the Gordon conjecture \cite[Conjecture 9.5]{Gordon:DehnSurg} is a generalization of the three-summand conjecture: A connected sum of three $3$-manifolds not homeomorphic to $S^3$ must have Dehn surgery number greater than $1$. This is in turn related to the cabling conjecture \cite{AcunaShort}.
Left-orderability of fundamental groups of $3$-manifolds is also well-studied in the context of the L-space conjecture \cite{LSpace}.


Due to this connection to the Dehn surgery number, we have the following corollary of Theorem~\ref{thmA: main}.
\begin{corollary}\label{cor: connected sum}
    If $M=M_1\# M_2$ is a connected sum of closed orientable connected $3$-manifolds with $\pi_1(M_i)$ nontrivial and left-orderable for $i=1,2$, then $\pi_1(M)$ has normal rank at least $2$, in particular, $M$ cannot be the result of a Dehn surgery on a knot in $S^3$.
\end{corollary}

The second assertion is a consequence of a theorem of Gordon--Luecke \cite[Theorem 3]{GordonLuecke}: If a reducible $3$-manifold is the result of a Dehn surgery on a knot in $S^3$, then one summand is a lens space.

Moreover, as Nathan Dunfield pointed out to us, one can find irreducible manifolds that have degree-one maps to examples from Corollary~\ref{cor: connected sum}. Combining this with a trick we learned from Tye Lidman, we obtain Theorem~\ref{thmA: Dehn surgery} below.

\begin{theoremA}\label{thmA: Dehn surgery}
    There are infinitely many different hyperbolic integer homology $3$-spheres $M$ such that $\pi_1(M)$ has normal rank at least $2$, and $M$ cannot be obtained from a Dehn surgery on a knot in $S^3$, $S^2\times S^1$, or any lens space.
\end{theoremA}

First, this gives infinitely many solutions to the Wiegold problem that are linear groups. Second, it strengthens theorems of Auckly \cite{Auckly} and Hom--Lidman \cite{HomLidman} about the existence of (infinitely many) hyperbolic integer homology $3$-spheres with Dehn surgery number at least $2$ (\cite[Problem 3.6(C)]{Kirby95}). Our examples have normal rank at least $2$ and our proof does not use any gauge theory. Moreover, the construction is very flexible; see Construction~\ref{construction}. 
See Section~\ref{sec: Dehn surgery} for more details.




It is natural to ask if the following generalization of Theorem~\ref{thmA: main} holds.
\begin{question}
    Let $G$ be a free product of $n$ nontrivial left-orderable groups. Is the normal rank of $G$ at least $n$?
\end{question}
A positive answer would imply that the connected sum of $n$ $3$-manifolds with nontrivial left-orderable fundamental groups must have Dehn surgery number at least $n$.

There have been some attempts to approach the Wiegold problem from other directions, which still remain mysterious.
In the article \cite{OsinThom}, Osin and Thom provide a conjectural connection between the notion of normal rank of a countable torsion-free group and its first $l^2$-betti number, motivated by the Wiegold problem. In \cite{MonodOzawaThom}, Monod, Ozawa and Thom formulate a version of the Wiegold problem for \emph{irng}'s which are rings that are possibly non-unital, demonstrating that a solution in this setting would lead to a solution of the original Wiegold problem. Both approaches remain unsolved. 
Also in a contrasting result \cite{MonodEisenmann}, Monod and Eisenmann proved that a compactly generated perfect locally compact group without infinite discrete quotients is normally generated by a single element. This emphasizes the difference with the topological group setting.

\section{The structure of the proof}\label{sec: structure of proof}

We sketch the proof of Theorem \ref{thmA: main} and introduce the two main ingredients. 

The notion of \emph{right-orderability} is defined in the same way as left-orderability, except that the total order is required to be invariant under right multiplication instead. A group admits a left-order if and only if it admits a right-order. In this article, all actions will be right actions, so we shall use the notion of right-orderability in the rest of this paper.

Let $A,B$ be right-orderable groups and $G=A\star B$.
Up to conjugation, we express $w$ as a cyclically reduced word:
$$w=a_1b_1\cdots a_n b_n\in G\qquad a_i\in A\setminus \{id\}, b_i\in B\setminus \{id\}.$$

Suppose some $a\in A\setminus \{id\}$ lies in the subgroup $\llangle w\rrangle$ normally generated by $w$, we get an equation of the form: 
$$a= (g_1 w^{n_1} g_1^{-1})(g_2 w^{n_2} g_2^{-1})\cdots (g_k w^{n_k} g_k^{-1}),$$
where $k\in \Z_+$, each $g_i\in G$ and $n_i\in\Z\setminus\{0\}$ for all $1\le i\le k$.

This gives rise to a surface map $f:S\to X$, where $X$ is a $K(G,1)$, $S$ is a sphere with $k+1$ boundary components, and $f$ takes the boundary components to loops in $X$ representing the conjugacy classes of $a, w^{n_1},\cdots, w^{n_k}$ respectively; see Figure~\ref{fig: fromeqn}.
We aim to show that such a surface map is too simple to exist in the sense that the complexity of the surface, measured by $-\chi(S)=k-1$, is strictly less than the complexity of the boundary, measured by $\sum_i |n_i|$ (which is at least $k$).


We show that the inequality should go the other way in Theorem~\ref{thmA: spectral gap theorem} below, for a more general class of surfaces (allowing genus and so on), which we call \emph{$w$-admissible surfaces}; see Definition \ref{def: admissible}. 
For such a surface $S$, we measure the complexity of the boundary by a notion of degree $\deg(S)$, which is an \emph{unsigned} count of the number of copies of $w^{\pm1}$ on the boundary. We ask the surface to be boundary-incompressible (Definition \ref{def: boundary-incompressible}) to ensure a genuine count of $\deg(S)$.
The surface $S$ above arising from equations fits into the definition, 
where $\deg(S)=\sum_i |n_i|$, and it is boundary-incompressible if we take an equation of minimal length $k$; see Example \ref{example: equations in groups}. Thus the inequality we show in Theorem~\ref{thmA: spectral gap theorem} below gives a contradiction.

\begin{theoremA}\label{thmA: spectral gap theorem}
    Let $G=A\star B$ be a free product of right-orderable groups $A$ and $B$, and let $w$ be an element of $G$ not conjugate into $A$ or $B$. Then for any boundary-incompressible $w$-admissible surface $S$ without $2$-sphere or disk components, we have
    $$-\chi(S)\ge \deg(S).$$
\end{theoremA}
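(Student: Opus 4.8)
The plan is to fix an arbitrary boundary-incompressible $w$-admissible surface $f\colon S\to X$, with $X$ a $K(G,1)$ and $S$ having no $2$-sphere or disk components, to put $f$ in normal form with respect to the splitting $G=A\star B$, and then to combine a bookkeeping of Euler characteristics with a suitably chosen right-order from the family constructed earlier in the paper. For Step~1 (normal form), model $X$ as $K(A,1)$ and $K(B,1)$ joined by an interval and homotope $f$ to be transverse to the midpoint $\ast$ of that interval. Then $\Gamma\defeq f^{-1}(\ast)$ is a properly embedded $1$-manifold in $S$ whose complementary pieces each map, after a deformation retraction, into $K(A,1)$ or $K(B,1)$; call these the $A$-pieces and $B$-pieces. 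Since $\Gamma$ maps to a single point, each circle component of $\Gamma$ bounds a null-homotopic boundary circle of an adjacent piece, so after capping such circles by disks and discarding the resulting closed components (none of which is a $2$-sphere, using the hypothesis on $S$) we may assume $\Gamma$ consists of $E$ arcs; cutting $S$ along $\Gamma$ then gives $-\chi(S)=\sum_P(-\chi(P))+E$, the sum over all pieces $P$.

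For Step~2 (boundary combinatorics), write $w=a_1b_1\cdots a_nb_n$ in cyclically reduced form, where $n\ge 1$ because $w$ is not conjugate into $A$ or $B$. A boundary component of $S$ that wraps $w^{\pm1}$ a total of $m$ times carries $nm$ arcs of $\partial S$ labeled by the $A$-syllables $a_i^{\pm1}$, so $\partial S$ carries exactly $n\deg(S)$ such arcs. Around each $A$-piece the arcs of its boundary alternate between $A$-syllable arcs and arcs of $\Gamma$, and each arc of $\Gamma$ is shared by exactly one $A$-piece and one $B$-piece; counting in two ways gives $E=n\deg(S)$. Substituting this into the formula from Step~1, the theorem becomes equivalent to the purely combinatorial assertion that $\sum_P(-\chi(P))\ge (1-n)\deg(S)$.

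Step~3 is the crux. For every piece one has $-\chi(P)=2g(P)+b(P)-2\ge -1$, with equality exactly when $P$ is a disk, so the only possible obstruction to the inequality in Step~2 is the presence of disk pieces, and a disk $A$-piece forces its cyclic boundary word in the syllables $a_i^{\pm1}$ to equal $\id$ in $A$. When $n=1$ all these syllables are powers of a single $a_1\ne\id$, so torsion-freeness of $A$ eliminates odd-length words, while the even balanced words correspond to a cancellation that is ruled out by boundary-incompressibility; hence there are no disk pieces and the case $n=1$ is done. For $n\ge 2$ the factors may carry many short relations among $a_1,\dots,a_n$, so neither a per-piece nor a naive global count suffices, and this is precisely where the main construction enters: one passes to the dynamical realization $G\curvearrowright\R$ of a right-order chosen from the constructed family on $A\star B$, and uses it to show that a surface with too many low-complexity pieces would yield a globally inconsistent configuration of the syllable arcs along $\Gamma$, forcing the disk pieces to be too few (relative to $\deg(S)$) to violate $\sum_P(-\chi(P))\ge(1-n)\deg(S)$. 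Feeding this back through Steps~1–2 gives $-\chi(S)\ge\deg(S)$.

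I expect the entire difficulty to sit in Step~3. Steps~1 and~2 are routine normal-form and Euler-characteristic manipulations, and the torsion-free reductions only handle the easiest configurations. The substantive work is (i) constructing a family of right-orders on $A\star B$ flexible enough to obstruct every gluing pattern that can arise from a surface, and (ii) translating ``compatibility with some order in the family'' into the quantitative statement that the surface is at least as complicated as its boundary. It is in this step, rather than anywhere in the bookkeeping, that right-orderability of the factors (not merely torsion-freeness) is indispensable.
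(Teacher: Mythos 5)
Your Steps~1 and~2 correctly reproduce the paper's normal-form decomposition and Euler-characteristic bookkeeping: cutting along $F=f^{-1}(\star)$, reducing to a normal form, and arriving at $-\chi(S)=\sum_P(-\chi(P))+|F|$ with $|F|=n\deg(S)$, so that the claim reduces to $\sum_P(-\chi(P))\ge(1-n)\deg(S)$. That reduction is exactly what the paper proves in Section~\ref{sec: adm} (Lemmas~\ref{lemma: reduced}, \ref{lemma: simple normal form}, \ref{lemma: Euler char formula}).

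Step~3, however, is not a proof but a gesture, and the gesture misidentifies the key dynamical input. You speak of choosing a right-order on $A\star B$ and passing to its dynamical realization, and of a ``globally inconsistent configuration of syllable arcs.'' The paper does not use a right-order on $A\star B$. What it constructs and uses is a \emph{relative stacking}: a possibly non-faithful action $\sigma:A\star B\to\textup{Homeo}^+(\R)$ together with a point $x$ such that $x\cdot\sigma(w)=x$ and the images of $x$ under the nonempty prefixes of $w$ are pairwise distinct (Definition~\ref{def: relative stacking}); showing such a pair exists precisely when $w$ is not a proper power is the content of Section~\ref{sec: rel stacking} and is entirely nontrivial. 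With a relative stacking in hand, the argument is not ``rule out disk pieces'': the paper permits disk-pieces, assigns real labels $\lamhat$ to junctures, orients each arc of $F$ by comparing $\lamhat$ at its two ends (boundary-incompressibility is exactly what guarantees these labels are distinct), and then counts ``sign changes'' around each piece. The quantitative steps are Lemma~\ref{lemma: sign changes} (each disk-piece has at least two sign changes) together with Lemma~\ref{lemma: segment count} (at least $2\deg(S)$ inconsistent segments globally), and these combine with the Euler-characteristic formula to give $-\chi(S)\ge\deg(S)$. None of this machinery appears in your Step~3, and ``the surface would yield a globally inconsistent configuration'' is not a substitute for it.

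Two smaller gaps. First, your $n=1$ special case is asserted, not proved: it is not clear that an even-valence disk-piece forces a boundary-compression in the sense of Definition~\ref{def: boundary-incompressible}, and the paper does not split off this case but treats it uniformly via the stacking. Second, you do not address the case where $w$ is a proper power, where relative stackings do not exist; the paper handles this by observing that a $w$-admissible surface is $u$-admissible of larger degree for $u$ a root of $w$, and reduces to the non-power case. As written, your proposal identifies where the difficulty lies but does not supply the central construction (relative stackings) or the central counting argument (sign changes), which are the actual content of the theorem.
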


If we express $w$ as a cyclically reduced word $a_1b_1\cdots a_nb_n$, Theorem~\ref{thmA: spectral gap theorem} holds under the weaker assumption that the finitely generated subgroups $A'=\langle a_1,\ldots,a_n\rangle$ and $B'=\langle b_1,\ldots,b_n\rangle$ are right-orderable. Similarly for Theorem~\ref{thmA: spectral gap from stacking} below; see Theorem~\ref{thm: spectral gap from stacking}.

A similar result in the context of HNN extensions was recently shown by the first author \cite[Theorem A]{Chen:Kervaire} to give a new proof of the Klyachko theorem \cite{Klyachko}. These are inspired by the spectral gap property in the context of stable commutator length \cite{DH91,Chen:sclfpgap,CH:sclgap,Heuer}, where the degree is the \emph{signed} count $|\sum_i n_i|$. In that context, the analog of Theorem~\ref{thmA: spectral gap theorem} holds under the weaker torsion-freeness assumption on the free factors \cite{Chen:sclfpgap}. Here we are interested in the stronger inequalities involving the \emph{unsigned} degree, for which we impose stronger assumptions. Similar unsigned or unoriented variants of stable commutator length has been studied by Duncan--Howie, Larsen Louder, Doron Puder, and Henry Wilton \cite{DH91,LouderWilton:coherence,Wilton:surfsubgroup,Wilton:curvinv,Puder}, which have been useful in understanding surface subgroups, one-relator groups, and more.

To prove Theorem~\ref{thmA: spectral gap theorem}, we have two main ingredients, one topological and the other dynamical.
First, we use a topological argument that gives the desired estimate of the Euler characteristic (Theorem~\ref{thmA: spectral gap from stacking} below), assuming the existence of a combinatorial labeling. Such a labeling can be derived from a dynamical condition that we call \emph{relative stacking} (Definition~\ref{def: relative stacking}), generalizing the notion of stacking from \cite{LouderWilton:stacking}.

\begin{theoremA}\label{thmA: spectral gap from stacking}
    For any free product $G=A\star B$ and $w\in A\star B$ expressed as a cyclically reduced word. If there is a relative stacking of $w$, then for any boundary-incompressible $w$-admissible surface $S$ without sphere or disk components, we have
    $$-\chi(S)\ge \deg(S).$$
\end{theoremA}

Here is the key notion of relative stacking.
\begin{definition}[Relative stacking]\label{def: relative stacking}
    Consider an action $\sigma:G=A\star B\to\textup{Homeo}^+(\R)$, not necessarily faithful.
    For a cyclically reduced word $w=a_1b_1\cdots a_n b_n$ with $a_i\in A\setminus\{id\}, b_i\in B\setminus \{id\}$,
    the \emph{trajectory} of some $x\in\R$ under $w$ is the multiset of points:
    $$\Omega(w,x)=\{x\cdot \sigma(a_1b_1\ldots a_ib_i)\mid 1\leq i\leq n\}\cup \{x\cdot \sigma(a_1b_1\ldots a_i)\mid 1\leq i\leq n\},$$
    which are the images of $x$ under the action of the nonempty prefixes of $w$.

    We say the trajectory of $x$ under $w$ is \emph{stable} if:
    \begin{enumerate}
        \item Each element in $\Omega(w,x)$ occurs exactly once.
        \item $x\cdot \sigma(w)=x$.
    \end{enumerate}

    A \emph{relative stacking} of $w$ is a right action $\sigma$ of $A\star B$ on $\R$ by orientation-preserving homeomorphisms of the real line and a point $x\in \R$, such that $\Omega(w,x)$ is stable.
\end{definition}

The second main input is to show the existence of relative stacking. It only works when $w$ is not a proper power, but Theorem~\ref{thmA: spectral gap theorem} is easier when $w$ is a proper power.
In fact, for the purposes of our main Theorem~\ref{thmA: main}, we only need to consider $w$ which are not proper powers since $\llangle w^n\rrangle\subset \llangle w\rrangle$.

\begin{theoremA}\label{thmA: Stacking}
    Let $G=A\star B$ be a free product of countable right-orderable groups $A$ and $B$. For any $w\in A\star B$ cyclically reduced of length at least $2$, a relative stacking of $w$ exists if and only if $w$ is not a proper power.
\end{theoremA}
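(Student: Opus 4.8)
The plan is to prove the two implications separately; the forward one is short, and the reverse carries all the difficulty.

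\textbf{Relative stacking implies that $w$ is not a proper power.} Suppose $(\sigma,x)$ is a relative stacking of $w$ but $w=u^m$ with $m\ge 2$. Since $w=a_1b_1\cdots a_nb_n$ is cyclically reduced, the normal form theorem for free products forces $m\mid n$ and $u=a_1b_1\cdots a_{n/m}b_{n/m}$, which is a proper prefix of $w$. Now $\sigma(w)=\sigma(u)^m$ fixes $x$, and since $\sigma(u)$ is an orientation-preserving, hence order-preserving, homeomorphism of $\R$, the relation $\sigma(u)^m(x)=x$ forces $\sigma(u)(x)=x$; that is, $x\cdot\sigma(u)=x=x\cdot\sigma(w)$. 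But $u$ and $w$ are two \emph{distinct} prefixes of $w$ appearing in the list that defines $\Omega(w,x)$, so $x$ occurs in $\Omega(w,x)$ with multiplicity at least two, contradicting stability condition (1). Hence $w$ is not a proper power.

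\textbf{Reducing the converse to a convexity statement.} For the other direction I would reduce the existence of a relative stacking to the assertion that \emph{there is a right-invariant total order $\prec$ on $G=A\star B$ in which the cyclic subgroup $\langle w\rangle$ is convex}. (This is lossless: one checks directly that a convex $\langle w\rangle$ can never exist when $w=u^m$ with $m\ge2$, since in any right-order $e\prec u\prec u^2\prec\cdots\prec u^m=w$, or the reversed chain if $u\prec e$, so $u\notin\langle w\rangle$ is trapped between two elements of $\langle w\rangle$.) Granting such a $\prec$, the set of right cosets $\langle w\rangle\backslash G$ carries a total order --- well defined because right cosets of a convex subgroup are convex subsets, and $G$-invariant under right translation because $\prec$ is right-invariant. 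As $A,B$ are countable, $\langle w\rangle\backslash G$ is a countable ordered right $G$-set, so its standard dynamical realization produces a right action $\sigma$ of $G$ on $\R$ by orientation-preserving homeomorphisms together with an order-equivariant placement of the cosets in $\R$; faithfulness of $\sigma$ is not needed. Let $x$ be the point of $\R$ attached to the coset $\langle w\rangle$. Then $x\cdot\sigma(w)$ is the point of $\langle w\rangle w=\langle w\rangle$, so $x\cdot\sigma(w)=x$; and for a prefix $p$ of $w$, $x\cdot\sigma(p)$ is the point of $\langle w\rangle p$. The $2n$ cosets $\langle w\rangle p_i$ (for the nonempty prefixes $p_1,\ldots,p_{2n}$ of $w$, with $p_{2n}=w$) are pairwise distinct: if $\langle w\rangle p_i=\langle w\rangle p_j$ for $1\le i<j\le 2n$ then $p_ip_j^{-1}\in\langle w\rangle$, but $p_ip_j^{-1}$ is a conjugate of the inverse of the syllable-subword of $w$ spanning positions $i+1,\ldots,j$, so it has cyclically reduced length at most $j-i<2n$, whereas every nontrivial power $w^k$ has cyclically reduced length $2n|k|\ge 2n$, and $w^0=e$ is excluded since $w$ is reduced. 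Thus $\Omega(w,x)$ consists of $2n$ distinct points and $x\cdot\sigma(w)=x$, i.e.\ the trajectory is stable.

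\textbf{The main obstacle: building the order.} It remains to construct, for $w$ not a proper power, a right-order on $G=A\star B$ in which $\langle w\rangle$ is convex, and I expect this to be where essentially all the work goes. My approach would be dynamical rather than combinatorial: fix right-orders on $A$ and $B$ (or, following the remark in the introduction, only on the finitely generated subgroups $A'=\langle a_1,\ldots,a_n\rangle$ and $B'=\langle b_1,\ldots,b_n\rangle$), realize each as an action on $\R$, and then amalgamate them into an action of $A\star B$ on $\R$ by a ping-pong construction organized along the axis of $w$ in the Bass--Serre tree of $A\star B$ --- laying down the ``tiles'' corresponding to the syllables of $w$ periodically along the orbit of a chosen basepoint and extending the action equivariantly over the rest of the tree --- so that the $\langle w\rangle$-orbit of the basepoint becomes convex, meaning no translate of it by an element outside $\langle w\rangle$ lands strictly between two powers of $w$. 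Securing this convexity requires enough freedom in the amalgamation that no single canonical order presents itself; instead one produces a whole \emph{family} of right-orders, parametrized by the choices in the ping-pong data, and then extracts a member with the required convexity. The hypothesis that $w$ is not a proper power is exactly the aperiodicity that keeps the periodic tiling from overlapping itself and so obstructing convexity; the genuinely delicate point is to maintain this convexity while keeping the order right-invariant, and that is the technical heart of the argument.
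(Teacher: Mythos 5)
Your forward direction is correct and matches the paper's (which is dismissed in one line). Your reduction of the converse to ``there is a right-order $\prec$ on $G$ in which $\langle w\rangle$ is convex'' is also internally coherent: the verification that such an order cannot exist when $w$ is a proper power, and the derivation of a stable trajectory from the dynamical realization of the ordered coset space $\langle w\rangle\backslash G$, both check out (the key point, that the $2n$ cosets $\langle w\rangle p_i$ are pairwise distinct because $p_ip_j^{-1}$ is conjugate to a reduced word of length $<2n$ and hence cannot be a nonzero power of $w$, is right).

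The genuine gap is that you never prove the convexity statement, which you yourself identify as ``the technical heart.'' Sketching ``a ping-pong construction organized along the axis of $w$ in the Bass--Serre tree'' with ``enough freedom in the amalgamation'' is a direction, not an argument, and it is not at all clear that it lands: the assertion that every non-proper-power $w$ in $A\star B$ generates a cyclic subgroup that is convex in \emph{some} right-order is, if true, strictly stronger than what a relative stacking requires. A stacking only needs the point $x$ to be separated from its images under the \emph{finitely many} proper prefixes of $w$; convexity of $\langle w\rangle$ constrains where \emph{every} element of $G\setminus\langle w\rangle$ can move the coset $\langle w\rangle$. You have therefore replaced a finite system of constraints with an infinite one, and a reduction that makes the target harder is not a reduction one can then wave at. The paper takes the opposite route: it isolates, for each single proper prefix $w_1$, the finite system $\{x\cdot\sigma(w)=x,\ x\cdot\sigma(w_1)\neq x\}$, solves each such system by an explicit and quite concrete construction (``dynamical arrangements'' on catenations of intervals, Propositions~\ref{Lem:Stackings} and~\ref{Lem:StackingsSimple}, with the non-proper-power hypothesis used only in the second case of the latter), and then merges the finitely many different solution-actions into one via a blow-up/diagonal-product lemma (Lemma~\ref{Lem:System}). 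That merging step is what lets the paper avoid ever needing a single globally coherent order, and is precisely the piece of machinery your proposal lacks. To rescue your outline you would need either to actually carry out the convexity construction (a nontrivial order-theoretic statement you would have to prove from scratch) or to weaken your target to the finitely many prefix constraints and find a way to satisfy them simultaneously, which is essentially what the paper's Lemma~\ref{Lem:System} accomplishes.
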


To prove Theorem~\ref{thmA: Stacking}, we provide a method to solve certain equations and inequations over the group where the variables are a $G$-action on $\R$ and a chosen point in $\R$. This is done using systematic blow-ups of fixed group actions on $\R$, reducing it to the following problem: Given any nonempty proper prefix $u$ of $w$, find an action $\sigma:A\star B\to \textup{Homeo}^+(\R)$ and $x\in \R$ that satisfies $x\cdot\sigma(w)=x$ and $x\cdot\sigma(u)\neq x$. For this, we build an action $\sigma$ with a closed interval $I$ such that:
\begin{enumerate}
    \item $I\cdot \sigma(w)\subseteq I$.
    \item $I\cdot \sigma(u)\cap I=\emptyset$.
\end{enumerate}
Such an action is built by developing a method that we call \emph{dynamical arrangements}, which are careful combinatorial encodings of actions of $A\star B$ on $\R$.
This finishes the proof since the first item ensures the existence of a point $x\in I$ fixed by $\sigma(w)$ using the intermediate value theorem, and the second item ensures the $\sigma(u)$ has no fixed point in $I$.

\subsection{Organization of the paper}\label{subsec: Organization}
We give the topological backgrounds on $w$-admissible surfaces in Section~\ref{sec: adm} and then prove Theorem~\ref{thmA: spectral gap from stacking} in Section~\ref{sec: topol argument}. Then we show the existence of relative stacking in Section~\ref{sec: rel stacking}. Finally we give the formal proofs of main results in Section~\ref{sec: proof of main results} and obtain the applications to Dehn surgeries in Section~\ref{sec: Dehn surgery}.

\section{Admissible surfaces}\label{sec: adm}
In this section, we define the necessary terminology about admissible surfaces and introduce the basic facts. Most of this is parallel to what is developed in the previous work of the first author \cite{Chen:Kervaire}, but here we focus on free products instead of HNN extensions. We include all details for completeness.

Given a group $G$ and a collection of proper subgroups $\{A_i\}_{i\in I}$, where $I$ could be empty, let $X$ be a connected topological space with $\pi_1(X)=G$. The only case of interest in this paper is when $G=\star_{i\in I} A_i$ with $|I|=2$.

\begin{definition}[$w$-admissible]\label{def: admissible}
    Given $w\in G$, a map $f:S\to X$ from a compact oriented surface $S$ is called a \emph{$w$-admissible surface} in $G$ (or $X$) relative to $\{A_i\}_{i\in I}$ if: 
    \begin{enumerate}

    \item The image of each boundary component of $S$
    \begin{itemize}
        \item either represents the conjugacy class of $w^n$ for some $n\in\Z\setminus\{0\}$ (we refer to the union of components of this kind as the \emph{$w$-boundary}),
        \item or represents a conjugacy class in $G$ that intersects some $A_i$ nontrivially (we refer to a boundary component of the this kind as an \emph{$A_i$-boundary}).
    \end{itemize}    
        \item We require that the $w$-boundary is non-empty.
    \end{enumerate}
     
    The $A_i$-boundary can be empty, but it is important to allow its existence (see Example \ref{example: equations in groups}). See the left of Figure \ref{fig: adm_and_bdrycompressible} for an illustration.

    We denote a (relative) $w$-admissible surface by the pair $(f,S)$,
    but we will often simply denote it as $S$ unless we would like to emphasize the map $f$.
    
    For each $w$-boundary component representing $w^n$, we define its \emph{degree} to be $|n|$. Then we define the \emph{degree} of $S$ as the sum of degrees over all $w$-boundary components and denote it as $\deg(S)$. For instance, the surface $S$ in Figure \ref{fig: adm_and_bdrycompressible} has degree $k+m+n$.
\end{definition}
\begin{figure}
	\labellist
	\small \hair 2pt
	\pinlabel $b$ at -10 235
	\pinlabel $b'$ at -10 175
    \pinlabel $a$ at -10 115
	\pinlabel $S$ at 40 200
	\pinlabel $X$ at 220 30
	\pinlabel $P$ at 125 130
	\pinlabel $f$ at 130 73
	\pinlabel $f'$ at 300 73
	\pinlabel $w^{-m}$ at 170 120
	\pinlabel $w^{m-n}$ at 95 165
	\pinlabel $w^n$ at 165 190
	\pinlabel $w^k$ at 165 250
	
	\pinlabel $b$ at 278 235
	\pinlabel $b'$ at 278 175
    \pinlabel $a$ at 278 115
	\pinlabel $S'$ at 330 200
	\pinlabel $w^{n-m}$ at 430 165
	\pinlabel $w^k$ at 445 250
	\endlabellist
	\centering
	\includegraphics[scale=0.7]{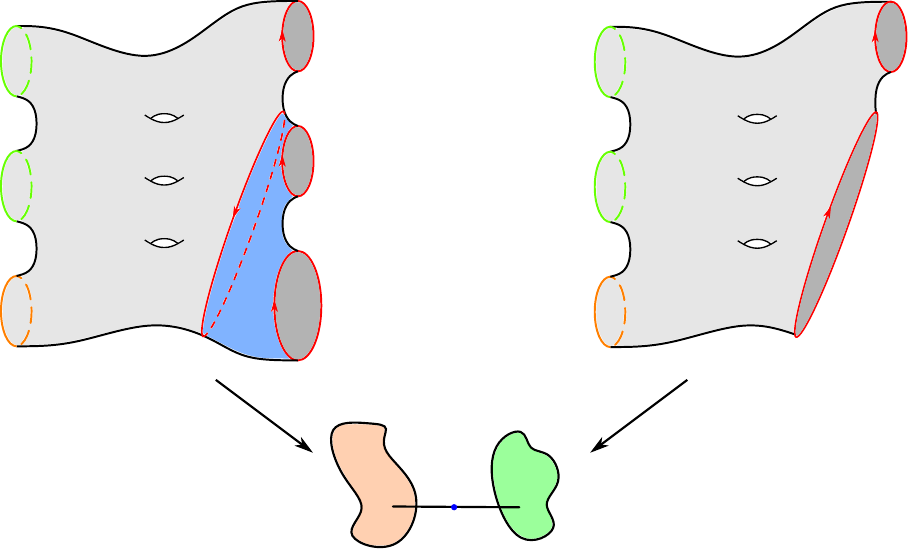}
	\caption{On the left is a $w$-admissible surface $(f,S)$ into a space $X$ with $\pi_1(X)=A\star B$, where $S$ has two boundary components representing $b,b'\in B$, one boundary component representing $a\in A$, and three $w$-boundary components representing $w^k,w^n,w^{-m}$ for some $k,m,n\in\Z_+$. The subsurface $P$ witnesses its boundary-compressibility, and $S'=S\setminus P$ on the right is the simplified $w$-admissible surface, whose boundary representing $w^{n-m}$ (with the orientation induced from $S'$) needs to be further capped off by a disk if $m=n$.}\label{fig: adm_and_bdrycompressible}
\end{figure}

\begin{remark}
    This is analogous to the definition of admissible surfaces relative to a collection of subgroups in the context of relative stable commutator length; see \cite[Definition 2.8]{Chen:sclBS}, and also \cite[Notation 2.5]{Cal:rational} for the absolute version. However, in that context, the degree is counted \emph{with signs}, namely, a boundary component representing $w^n$ with $n<0$ contributes $-n$ to the degree. The unsigned version of degree here is an upper bound of the absolute value of the signed version.
\end{remark}

To avoid a dummy count of degree, we will often require $w$-admissible surfaces to be \emph{boundary-incompressible}, following \cite[Definition 2.4]{Chen:Kervaire}.
\begin{definition}[boundary-incompressibility]\label{def: boundary-incompressible}
    A $w$-admissible surface $(f,S)$ relative to $\{A_i\}_{i\in I}$ is \emph{boundary-compressible} if there is an embedded subsurface $P\subset S$ homeomorphic to a pair of pants, such that two boundary components of $P$ are on the $w$-boundary of $S$ and represent the conjugacy class of $w^n$ and $w^{-m}$ for some $m,n\in\Z_+$, and the third boundary component of $P$ is in the interior of $S$ and, with the induced orientation from $P$, represents the conjugacy class of $w^{m-n}$ under the map $f$; see the left of Figure \ref{fig: adm_and_bdrycompressible}.

    We say $(f,S)$ is \emph{boundary-incompressible} if there is no such $P$.
\end{definition}

\begin{remark}\label{rmk:boundary-compression}
    One can compress a boundary-compressible $w$-admissible surface $S$ as follows to get a simpler surface that is either $w$-admissible again or has no $w$-boundary left; see Figure \ref{fig: adm_and_bdrycompressible} for an illustration. Let $S'$ be the closure of $S\setminus P$ equipped with the restriction of $f$. This ``merges'' the two $w$-boundary components of $S$ representing $w^n$ and $w^{-m}$ into one that represents the conjugacy class of $w^{n-m}$ (with the induced orientation from $S'$). If $m\neq n$, then $(f,S')$ is a $w$-admissible surface already. 
    If $m=n$, then the new boundary is null-homotopic in $X$ and thus bounds a disk $g:D\to X$. We use this to cap off $S'$ to get a surface $S''\defeq S'\cup D$ with a well-defined map $f''$ using $f$ and $g$, so that $(f'',S'')$ is a $w$-admissible surface except that possibly there is no $w$-boundary left. The surfaces $S'$ and $S''$ are simpler in the sense that $-\chi(S')=-\chi(S)-1<-\chi(S)$ and $-\chi(S'')=-\chi(S')-1<-\chi(S)$.
\end{remark}

The pair of pants $P$ in Definition~\ref{def: boundary-incompressible} shrinks down to a neighborhood of the union of the two $w$-boundary components $C,C'$ with a proper arc $\gamma\subset S$ connecting them. One can characterize boundary-incompressibility in terms of arcs (like $\gamma$); compare to the second part of \cite[Definition 3.1]{Puder}.

Here is the key example relating $w$-admissible surfaces to equations in a group.
\begin{example}\label{example: equations in groups}
    Let $a\neq id\in A$, which we treat as an element in $G=A\star B$. Let $w\in G$ be an element not conjugate into $A$ or $B$. Let $\llangle w\rrangle$ be the subgroup normally generated by $w$, then $a\in \llangle w \rrangle$ if and only if one can express $a$ in an equation of the form below:
    \begin{equation}\label{eqn: group equation}
        a= (g_1 w^{n_1} g_1^{-1})(g_2 w^{n_2} g_2^{-1})\cdots (g_k w^{n_k} g_k^{-1}),
    \end{equation}
    where $k\in \Z_+$, each $g_i\in G$ and $n_i\neq0\in\Z$ for all $1\le i\le k$.
    
    Each such equation corresponds to a surface map $f:S\to X$, where $S$ is a sphere with $k+1$ boundary components, where one boundary component represents the conjugacy class of $a$, and the remaining $k$ components represent $w^{n_i}$ for $1\le i\le k$ respectively; see Figure~\ref{fig: fromeqn}. This makes $(f,S)$ a $w$-admissible surface relative to $\{A,B\}$ (or just $\{A\}$), whose degree is $\sum_i |n_i|$.

\begin{figure}
	\labellist
	\small \hair 2pt
	\pinlabel $a$ at -10 105
	\pinlabel $S$ at 70 60
	\pinlabel $w^{n_1}$ at 130 180
	\pinlabel $w^{n_2}$ at 70 120
	\pinlabel $w^{n_3}$ at 128 60
	\pinlabel $w^{n_4}$ at 190 120
	\endlabellist
	\centering
	\includegraphics[scale=0.6]{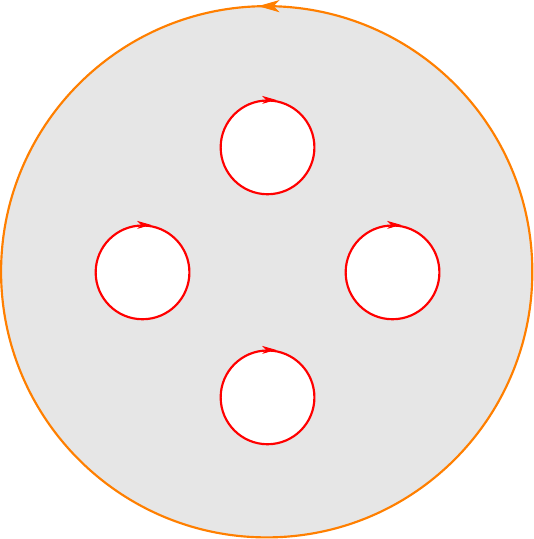}
	\caption{A $w$-admissible surface $S$ corresponding to an equation of the form (\ref{eqn: group equation}) with $k=4$.}\label{fig: fromeqn}
\end{figure}

    If $S$ is boundary-compressible, then the compression as in Remark \ref{rmk:boundary-compression} yields a $w$-admissible surface $S'$ with fewer $w$-boundary components, which corresponds to an equation with strictly smaller $k$. Since $a$ is nontrivial and the corresponding component in $S$ does not change in the process, we cannot run out of $w$-boundary components. It follows that, if we consider an equation for $a$ of the form (\ref{eqn: group equation}) with minimal $k$, then the corresponding $w$-admissible surface $S$ must be boundary-incompressible. These are the surfaces that we will apply our spectral gap Theorem \ref{thmA: spectral gap theorem} to.
\end{example}

We measure the complexity of a ($w$-admissible) surface $S$ by the negative modified Euler characteristic $-\chi^-(S)$ defined as follows: For each connected component $\Sigma$ of $S$, define $\chi^-(\Sigma)\defeq\min(\chi(\Sigma),0)$; then define $\chi^-(S)$ as the sum of $\chi^-(\Sigma)$ over all components of $S$. Equivalently, $\chi^-(S)$ is the Euler characteristic of $S$ after removing all disk or sphere components. For our purposes, we can restrict our attention to $w$-admissible surfaces where each connected component contains some $w$-boundary component, so there is no disk (if $w$ has infinite order) or sphere component, and hence the complexity is simply $-\chi^-(S)=-\chi(S)$.

\subsection{A normal form}\label{subsec: normal form}
In this subsection, we focus on the case of $w\in G=A\star B$ and introduce a decomposition of any $w$-admissible surface into a \emph{simple normal form}, after possibly simplifying the surface first. This allows us to put specific structures on the surface to aid our Euler characteristic estimate.

The (simple) normal form we define below is similar to the one used in \cite{Chen:Kervaire}, both should generalize to graphs of groups/spaces and are adapted from the (simple) normal form in \cite{Chen:sclBS} of admissible surfaces in the context of stable commutator length.

Since we will focus on the case of a free product $G=A\star B$ and $w\in G$ not conjugate into $A$ or $B$, it is understood that the $w$-admissible surfaces we consider are \emph{relative to $\{A,B\}$}.

Let $X_A$ and $X_B$ be pointed $K(A,1)$ and $K(B,1)$ spaces respectively. Let $X=X_A\sqcup [-1,1] \sqcup X_B/\sim$, where the equivalence relation $\sim$ glues $-1$ (resp. $1$) to the base point of $X_A$ (resp. $X_B$). 
Then $X$ is a $K(G,1)$ space, which contains a point $\star$ corresponding to the midpoint $0$, and we treat it as the base point of $X$.
Let $\widehat{X}_A\defeq X_A\cup [-1,0]\subset X$, which is a copy of $K(A,1)$ based at $\star$, and similarly let $\widehat{X}_B\defeq X_B\cup[0,1]\subset X$; see Figure~\ref{fig: junctures_and_F}.

Fix any element $w\in G$ that is not conjugate into $A$ or $B$. 
Up to conjugation, we assume that $w$ is written as a cyclically reduced word $w=a_1 b_1\cdots a_n b_n$ for some $n\in\Z_+$, where each $a_i\neq id \in A$ and $b_i\neq id\in B$. Denote the \emph{word length} of $w$ as $|w|\defeq 2n$. Note that any such $w$ has infinite order.

We can represent $w$ as a map $f_w: S^1_w\to X$ from an oriented circle $S^1_w$, so that 
$$J_w\defeq f_w^{-1}(\star)\subset S^1_w$$ 
consists of $|w|$ points, which we refer to as the \emph{junctures}.
The junctures cut $S^1_w$ into $|w|$ arcs $\alpha_1, \beta_1,\ldots, \alpha_n, \beta_n$ in cyclic order, where each $\alpha_i$ (resp. $\beta_i$) is mapped by $f_w$ to a loop in $X$ supported on $\widehat{X}_A$ (resp. $\widehat{X}_B$) based at $\star$ representing $a_i\in A$ (resp. $b_i\in B$); see Figure~\ref{fig: junctures_and_F}.

\begin{figure}
	\labellist
	\small \hair 2pt
	\pinlabel $b$ at -12 220
	\pinlabel $a$ at -12 100
	\pinlabel $S$ at 150 25
	\pinlabel $w^{-1}$ at 330 55
	\pinlabel $w^2$ at 320 310
    \pinlabel $C$ at 318 230
	
	\pinlabel $f$ at 345 102
    \pinlabel $p$ at 350 260
    \pinlabel $\color{blue}{F}$ at 100 150
	
	\pinlabel $\color{blue}{\star}$ at 468 90
    \pinlabel $\color{blue}{J_w}$ at 470 200
	\pinlabel $X$ at 485 60
	\pinlabel $S^1_w$ at 545 220
	\pinlabel $f_w$ at 480 160
    \pinlabel $X_A$ at 425 95
	\pinlabel $X_B$ at 510 95
	\pinlabel $\widehat{X}_A$ at 440 25
	\pinlabel $\widehat{X}_B$ at 505 25
    \pinlabel $\alpha_1$ at 505 215
    \pinlabel $\beta_1$ at 510 280
    \pinlabel $\alpha_2$ at 420 280
    \pinlabel $\beta_2$ at 425 215
    \endlabellist
	\centering
	\includegraphics[scale=0.6]{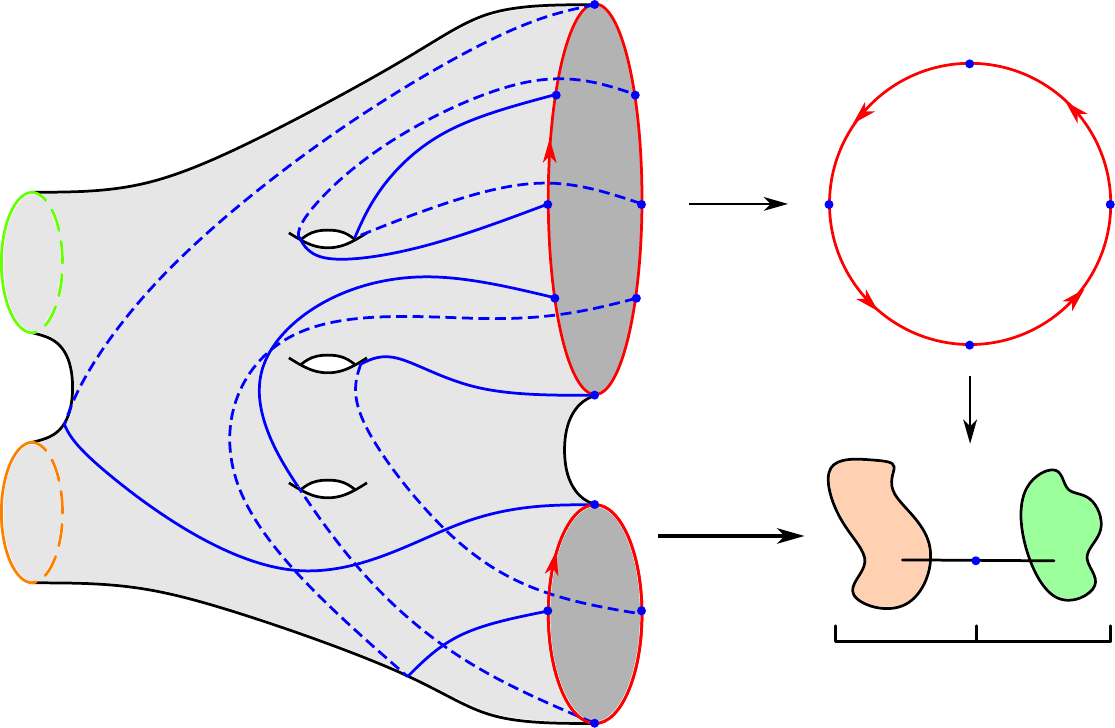}
	\caption{The set $J_w=f_w^{-1}(\star)$ on the circle $S^1_w$. The set of disjoint proper arcs $F=f^{-1}(X_C)$ is a set of embedded disjoint proper arcs in a reduced $w$-admissible surface $S$. The $w$-boundary component $C$ has a cover map $p$ to $S^1_w$ that pulls back $J_w$ to $J_C$.}\label{fig: junctures_and_F}
\end{figure}

For any $w$-admissible surface $(f,S)$, up to homotopy, we assume that the restriction of $f$ to each $w$-boundary component $C$ factors as $C\cong S^1\stackrel{p}{\to} S^1_w\stackrel{f_w}{\to} X$, where $p$ is a covering map whose degree $d$ is the degree of the component $C$. Then $p$ pulls back the set of junctures $J_w$ to a set $J_C\defeq p^{-1}(J_w)$ of $d|w|$ points on $C$, which we also refer to as junctures; see Figure~\ref{fig: junctures_and_F}. These junctures cut $C$ into $d|w|$ \emph{segments}, each is mapped to some $\alpha_i^{\pm 1}$ or $\beta_i^{\pm 1}$ under the map $p$, where the sign is $+1$ if and only if $C$ represents a positive power of $w$. 

We may also assume up to a homotopy that each $A$-boundary or $B$-boundary of $S$ has image under $f$ away from $\star$.

Since $\star$ is locally a submanifold of codimension one in $X$, and $f|_{\partial S}$ is already transverse to $\star$, we may assume that $f$ is transverse to $\star$, up to a homotopy of $f$ rel $\partial S$.
Then by transversality, $F\defeq f^{-1}(\star)$ is a properly embedded $1$-submanifold of $S$. That is, $F$ is a finite disjoint union of embedded proper arcs and embedded loops. 

\begin{definition}[reduced]
    We say a $w$-admissible surface $S$ is \emph{reduced} if each connected component of $S$ intersects the $w$-boundary, and $F=f^{-1}(\star)$ contains no loop, that is, $F$ is a finite disjoint union of embedded proper arcs; see Figure~\ref{fig: junctures_and_F}.
    In particular, there are no closed or disk components since $w$ has infinite order, so $\chi^-(S)=\chi(S)$ if $S$ is reduced.
\end{definition}

We can always simplify $S$ to a reduced one by the next lemma.

\begin{lemma}\label{lemma: reduced}
    If $(f,S)$ is a $w$-admissible surface in $X$, then there is a $w$-admissible surface $(f',S')$ with $\deg(S')=\deg(S)$ and $-\chi^-(S')\le -\chi^-(S)$ such that $S'$ is reduced. Moreover, $(f',S')$ is boundary-incompressible if $(f,S)$ is.
\end{lemma}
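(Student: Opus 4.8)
The plan is a two-stage reduction to ``normal form'', in the spirit of the corresponding lemmas in the stable-commutator-length literature (compare \cite{Cal:rational,Chen:sclBS,Chen:Kervaire}). First I would discard the useless components: let $S_1\subseteq S$ be the union of those connected components that meet the $w$-boundary, with $f_1=f|_{S_1}$. Since $w$ has infinite order, every discarded component $\Sigma$ has $\chi^-(\Sigma)=\min(\chi(\Sigma),0)\le 0$, so $-\chi^-(S_1)\le -\chi^-(S)$; clearly $\deg(S_1)=\deg(S)$, the $w$-boundary stays non-empty, and any compressing pair of pants in $S_1$ is already one in $S$, so boundary-incompressibility is inherited. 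After this step, every component of $S_1$ meets the $w$-boundary, hence is neither a disk nor a sphere (its boundary would be null-homotopic, but every nonzero power of $w$ is non-trivial), so $-\chi^-$ will agree with $-\chi$ on $S_1$ and on all surfaces produced below.

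Second, I would eliminate the loop components of $F$ one at a time. We may assume $f_1$ is transverse to $\star$ (as arranged above), so $F_1\defeq f_1^{-1}(\star)$ is a properly embedded $1$-manifold whose arcs end at junctures on the $w$-boundary and whose loops lie in the interior; each such loop $\ell$ is mapped constantly to $\star$, hence is null-homotopic in $X$. If $\ell$ bounds a disk in $S_1$, take an innermost such $\ell$, bounding a disk $D$ with $\mathrm{int}(D)\cap F_1=\emptyset$, and choose a slightly larger disk $D''\supseteq D$ meeting $F_1$ only in $\ell$; then $f_1(\partial D'')$ lies in a single component of $X\setminus\{\star\}$, a copy of $K(A,1)$ or $K(B,1)$, and is null-homotopic there because the free-product inclusions $A\inj G$ and $B\inj G$ are injective, so $f_1|_{\partial D''}$ extends over $D''$ away from $\star$; this removes $\ell$ while leaving $S_1$, $\deg$, and $-\chi^-$ unchanged and creating no new loop. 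If instead $\ell$ is essential in $S_1$, I would compress along it: cut $S_1$ along $\ell$ and cap the two new boundary circles with disks mapped off $\star$ (using the null-homotopy as in the previous case). This raises $\chi$ by $2$ on the affected component(s), and since none of the resulting pieces can be a disk or a sphere (a nonzero power of $w$, and a non-trivial element of $A$ or $B$, do not bound), it strictly decreases $-\chi^-$, keeps $\deg$ fixed, and creates no new loop; I would then discard any new component with no $w$-boundary. Each move destroys at least one loop and creates none, so finitely many steps yield a reduced $w$-admissible $(f',S')$ with $\deg(S')=\deg(S)$ and $-\chi^-(S')\le-\chi^-(S)$.

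For the ``moreover'' clause, I would use that every modification above is supported in the interior and fixes the $w$-boundary together with its map, and that the compressions in the second stage are inverses of tubings along embedded annuli, so $S$ is recovered from $S'$ by reattaching throwaway components and finitely many tubes and by re-mapping finitely many interior disks. Given a compressing pair of pants $P'\subseteq S'$, shrink it to a regular neighborhood of $C\cup C'\cup\gamma$, where $C,C'$ are $w$-boundary circles of $S'$ (hence of $S$, with the same map) and $\gamma$ is an embedded arc; isotope $\gamma$ rel its endpoints off the finitely many re-mapped disks and off the feet of the reconstructing tubes so that it lies in a subsurface common to $S$ and $S'$ on which $f$ and $f'$ agree. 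The resulting pair of pants witnesses boundary-compressibility of $(f,S)$, giving the contrapositive.

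I expect the genuine work to be in this last step — verifying that none of the surgeries can manufacture a boundary compression — together with the Euler-characteristic bookkeeping in the essential-loop case, making sure $-\chi^-$ does not increase even when a component disconnects or is thrown away. Both are routine but fiddly; everything else is bookkeeping.
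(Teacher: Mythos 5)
Your proof is correct and follows essentially the same route as the paper's: both discard components missing the $w$-boundary, iteratively eliminate loop components of $F$ by compressing along them and capping with disks mapped to $\star$ (with the same Euler-characteristic bookkeeping via Mayer--Vietoris), and preserve boundary-incompressibility by shrinking a putative pair of pants to a neighborhood of two $w$-boundary circles joined by an arc $\gamma$ and isotoping $\gamma$ off the affected disks. The only cosmetic differences are that you handle a loop bounding a disk by directly re-homotoping $f$ off $\star$ over a slightly larger disk rather than cutting, capping, and discarding the resulting sphere as the paper does, and that you argue no disk or sphere components arise from an essential compression, whereas the paper just observes that at most two disk components can appear and bounds $\chi^-$ accordingly.
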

\begin{proof}
    First we discard components of $S$ that are disjoint from the $w$-boundary. This preserves $\deg(S)$ and boundary-incompressibility, and it does not increase $-\chi^-(S)$. 

    Now assume each component of $S$ witnesses the $w$-boundary. Then $S$ has no closed or disk components as $w$ has infinite order, so $\chi^-(S)=\chi(S)$.
    
    Suppose there is a loop $L\subset F$, which is embedded in $S$. 
    Let $c: D\to X$ be the constant map taking a closed disk $D$ to $\star\in X$. Cutting $S$ along $L$ creates two new boundary components, and we close them up by gluing in two copies of $D$ along the boundary. Denote the new surface by $S'$. 
    Note that $f|_L$ agrees with $c|_{\partial D}$, so we obtain a well-defined map $f':S'\to X$.

    Then $(f',S')$ is $w$-admissible with $\deg(S')=\deg(S)$ since boundary components of $S'$ exactly correspond to those of $S$, on which the map $f'$ agrees with $f$. 

    Denote by $\Sigma$ the component of $S$ containing $L$, which becomes $\Sigma'\subset S'$, which is either one or two components of $S'$ depending on whether $L$ separates $\Sigma$. Note that $-\chi(\Sigma')=-\chi(\Sigma)-2$ by Mayer--Vietoris. By our assumption above, $\Sigma$ is not closed, so $L$ cannot bound disks on both sides. 
    \begin{enumerate}
        \item If $L$ bounds a disk on one side, the process above creates a sphere component, and the other component is homeomorphic to $\Sigma$. In this case we delete the sphere component and redefine $S'$ to be the remaining surface (Figure~\ref{fig: compress}), in which case $\chi^-(\Sigma')=\chi(\Sigma')=\chi(\Sigma)=\chi^-(\Sigma)$.
        \item Otherwise, this does not create any sphere component and can create at most two disk components\footnote{Actually, it is also easy to show that this does not occur.}, so $\chi^-(\Sigma') \ge \chi(\Sigma')-2$. 
        Thus 
        $$-\chi^-(\Sigma')\le -\chi(\Sigma')+2=-\chi(\Sigma)=-\chi^-(\Sigma).$$
    \end{enumerate}
    Hence in any case we have $-\chi^-(S')\le -\chi^-(S)$.
    For each component of $\Sigma'$, homotope $f'$ slightly to push $f'(D)$ away from $\star$ to eliminate the loop $L$ from $F'=f'^{-1}(\star)$ so that $F'$ has fewer loops than $F$.
    
    It remains to see that $(f',S')$ inherits the boundary-incompressibility of $(f,S)$.
    Suppose $(f',S')$ is boundary-compressible due to an embedded pair of pants $P$. Denote the boundary components of $P$ as $L_1$, $L_2$ and $L_3$, where $L_1$ and $L_2$ are $w$-boundary components of $S'$ (coming from $S$) and $L_3$ is in the interior of $S'$. Up to an isotopy, we may shrink $P$ to a tubular neighborhood of $L_1\cup L_2\cup \gamma$ for some proper embedded arc $\gamma$ connecting $L_1$ and $L_2$. We can homotope $\gamma$ away from the new disks in the construction of $S'$, which does not change the homotopy class of $f'(\partial L_3)$; see Figure~\ref{fig: compress}. 
    So we may assume that $P$ is disjoint from the new disks and thus correspond to a pair of pants in $S$ that witnesses the boundary-compressibility of $(f,S)$.

    The process above strictly decreases the number of loop components in $F$, so by repeating the whole process above finitely many times we arrive at a desired reduced $(f',S')$.
\end{proof}

\begin{figure}
	\labellist
	\small \hair 2pt
	\pinlabel $S$ at 60 25
	\pinlabel $L_1$ at 160 160
	\pinlabel $L_2$ at 160 95
    \pinlabel $L$ at 80 145
    \pinlabel $P$ at 115 105
    
    \pinlabel $S'$ at 280 25
    \pinlabel $L_1$ at 380 160
	\pinlabel $L_2$ at 380 95
    \pinlabel $L$ at 305 145
    \pinlabel $D$ at 325 155
    \pinlabel $\gamma$ at 280 120
    \pinlabel $P$ at 335 105
    \endlabellist
	\centering
	\includegraphics[scale=0.8]{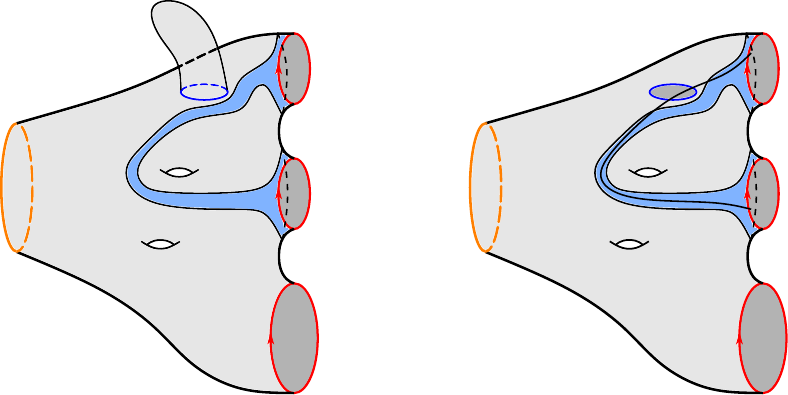}
	\caption{Compress a $w$-admissible surface $S$ along a loop $L\subset F$ to obtain $S'$. Push the proper arc $\gamma$ away from the disk $D$ bounding $L$ in $S'$ to isotope the pair of pants $P\simeq L_1\cup L_2\cup\gamma$ so that it ``lifts'' to $S$.}\label{fig: compress}
\end{figure}

Now assume that $(f,S)$ is already a reduced $w$-admissible surface. In particular, 
$F=f^{-1}(\star)$ is a finite disjoint union of proper arcs, whose endpoints are necessarily junctures on the $w$-boundary of $S$. 
Note that any juncture is contained in exactly one arc in $F$ by disjointness.
Then $F$ cuts $S$ into two closed (as subsets) possibly disconnected subsurfaces $S_A\defeq f^{-1}(\widehat{X}_A)$ and $S_B\defeq f^{-1}(\widehat{X}_B)$ so that $S=S_A \cup_F S_B$; see the left of Figure~\ref{fig: simple}.

There are two possible kinds of boundary components of $S_A$:
\begin{enumerate}
    \item $A$-boundary components, exactly corresponding to the $A$-boundary components of $S$ (which are disjoint from $F$); see the orange loops on the right of Figure~\ref{fig: simple}; and
    \item \emph{polygonal boundary components}, each of which has an even number of sides, alternating between segments on some $w$-boundary of $S$ and proper arcs in $F$, the latter are called \emph{turns}; see the red and blue arcs on the right of Figure~\ref{fig: simple}.
\end{enumerate}

\begin{figure}
	\labellist
	\small \hair 2pt
	\pinlabel $S$ at 150 10
	\pinlabel $S_A$ at 150 125
	\pinlabel $S_B$ at 153 50
    
    \pinlabel $S'$ at 480 5
	\pinlabel $S'_A=P_1\sqcup P_2$ at 480 125
	\pinlabel $S'_B=P_3$ at 482 45
    
    \pinlabel $P_1$ at 330 110
    \pinlabel $P_2$ at 640 110
    
    \endlabellist
	\centering
	\includegraphics[scale=0.6]{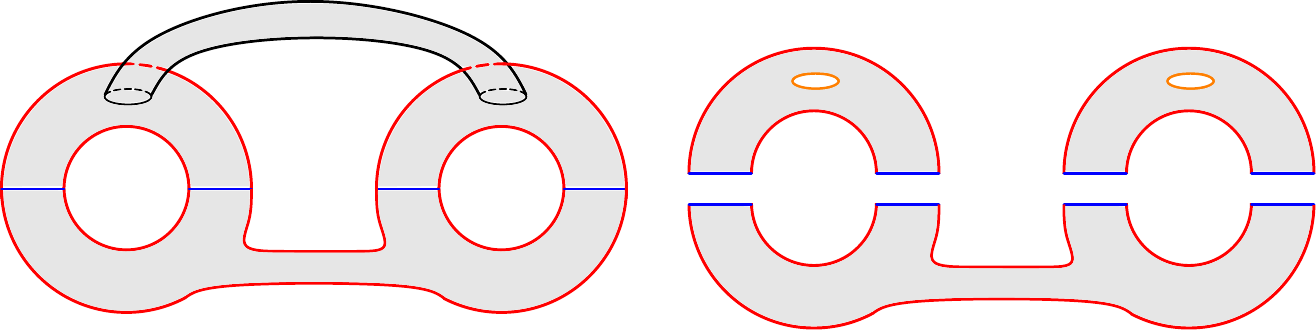}
	\caption{Left: The proper arcs in $F$ cut a reduced $w$-admissible surface $S$ into $S_A$ and $S_B$, and in this example $S_A$ has two polygonal boundary components and $S_B$ has one. Right: Simplifying $S$ as in Lemma~\ref{lemma: simple normal form} gives a $w$-admissible $S'$ in simple normal form, which is a union of a disk-piece $P_3$ and two annulus-pieces $P_1$ and $P_2$, where $d(P_1)=d(P_2)=2$, and $d(P_3)=4$.}\label{fig: simple}
\end{figure}

Since $f(S_A)\subset \widehat{X}_A$, each polygonal boundary component of $S_A$ is mapped to a loop in $\widehat{X}_A$, representing some conjugacy class in $A$, referred to as the \emph{winding class} of this polygonal boundary component. 
Note that the winding class is necessarily trivial if the polygonal boundary component bounds a disk component of $S_A$.

There is a similar structure on $S_B$ and we define the terminology similarly. The surface $S$ is obtained by gluing $S_A$ and $S_B$ along the turns.

\begin{definition}[(simple) normal form, disk-pieces, and annuli-pieces]\label{def: normal form}
    We call each connected component of $S_A$ or $S_B$ a \emph{piece}. 
    We refer to such a decomposition of $S$ into pieces as the \emph{normal form} of the reduced $w$-admissible surface $(f,S)$.
    We say the normal form is \emph{simple} if each piece has exactly one polygonal boundary component and is homeomorphic to either a disk or an annulus; see the right of Figure~\ref{fig: simple}.

    In the simple normal form, we refer to the two kinds of pieces as \emph{disk-pieces} and \emph{annulus-pieces} depending on the topological type. For each such piece $P$, define its \emph{valence} as $d(P)\defeq k$ if the unique polygonal boundary component has $2k$-sides (i.e. $k$ segments), where $k\in\Z_+$. Denote the Euler characteristic of such a piece by $\chi(P)$, which is $1$ (resp. $0$) if $P$ is a disk-piece (resp. annulus-piece).
\end{definition}

We can always simplify a $w$-admissible surface into one that admits a simple normal form. In particular, a piece in $S_A$ with more than one $A$-boundary component can be simplified.
\begin{lemma}\label{lemma: simple normal form}
    For any $w$-admissible surface $(f,S)$, there is another $w$-admissible surface $(f',S')$, such that $(f',S')$ has a simple normal form, with $-\chi^-(S')\le -\chi^-(S)$ and $\deg(S')= \deg(S)$. Moreover, $(f',S')$ is boundary-incompressible if $(f,S)$ is.
\end{lemma}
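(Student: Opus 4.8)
The plan is to first reduce $(f,S)$ via Lemma~\ref{lemma: reduced}, and then to run a surgery that, piece by piece, replaces each piece of $S_A$ and of $S_B$ by a disjoint union of disk-pieces and annulus-pieces. The crucial point is to make this surgery \emph{supported away from a collar of the $w$-boundary}, so that the junctures, segments, turns, and hence $\deg(S)$ are all unchanged; only the genus, the $A$-boundary components, and the topology ``deep inside'' the pieces are altered, which is harmless since $w$-admissibility imposes no condition on the $A$-boundary beyond meeting $A$ or $B$ nontrivially.

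So assume $(f,S)$ is reduced, with normal form $S=S_A\cup_F S_B$. Fix a piece $P$ of $S_A$; the case of $S_B$ is identical. As $f|_P$ maps into $\widehat{X}_A\simeq K(A,1)$, each polygonal boundary component $\partial_i$ of $P$ carries a well-defined winding class $z_i\in A$, the conjugacy class of $f|_{\partial_i}$; moreover $P$ has at least one polygonal boundary component, since otherwise $P$ would be disjoint from $F$ and hence a component of $S$ with no $w$-boundary, contradicting reducedness. Choose disjoint half-open collars $\partial_i\times[0,\varepsilon)\subset P$ of these components; a collar of the $w$-boundary of $S$ inside $P$ lies in $\bigsqcup_i\partial_i\times[0,\varepsilon)$, so the surgery below avoids such a collar. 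Cut $S$ along $\bigsqcup_i\partial_i\times\{\varepsilon\}$, discard the core $P\setminus\bigsqcup_i\partial_i\times[0,\varepsilon)$ together with its $A$-boundary components, and for each $i$ fill back in: if $z_i=1$, cap $\partial_i\times\{\varepsilon\}$ with a disk mapped into $\widehat{X}_A\setminus\{\star\}$ bounding the null-homotopic loop $f|_{\partial_i\times\{\varepsilon\}}$; if $z_i\ne1$, glue on an annulus $\partial_i\times[\varepsilon,1]$ mapped into $\widehat{X}_A\setminus\{\star\}$ realizing a free homotopy from $f|_{\partial_i\times\{\varepsilon\}}$ to a fixed loop representing $z_i$, with $\partial_i\times\{1\}$ declared a new $A$-boundary component. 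Each new piece carries exactly the polygonal boundary component $\partial_i$ and is a disk-piece ($z_i=1$) or an annulus-piece ($z_i\ne1$). Doing this for all pieces yields a $w$-admissible $(f',S')$ with $\deg(S')=\deg(S)$ that is reduced (the new pieces map away from $\star$, so $F'=F$) and in simple normal form.

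For the Euler characteristic, gluing pieces along the turns contributes $-\#F$ to $\chi$ for both $S$ and $S'$, and $F'=F$; so $-\chi(S')\le-\chi(S)$ is equivalent to the per-piece inequality $\#\{i : z_i=1\}\ge\chi(P)$, the left side being the number of new disk-pieces replacing $P$. This is clear when $\chi(P)\le0$; when $\chi(P)=1$ the piece $P$ is a disk, whose unique boundary component must be a polygonal boundary of trivial winding class (it is not an $A$-boundary, as $P$ has a polygonal boundary), so both sides equal $1$; and $\chi(P)=2$ cannot occur, as $S$ is reduced and $w$ has infinite order. Moreover every disk-piece has valence $\ge2$, since the winding class of a valence-$1$ polygonal boundary component is represented by a single nontrivial $a_i$ or $b_i$; hence $S'$ has no disk or sphere components --- a disk component would be a union of $k$ disk-pieces joined along $k-1$ turns, of total valence $2(k-1)<2k$, against valence $\ge2$, and sphere components are excluded even more easily --- so $-\chi^-(S')=-\chi(S')\le-\chi(S)=-\chi^-(S)$.

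It remains to see that $(f',S')$ is boundary-incompressible if $(f,S)$ is. If not, an embedded pair of pants $P^\ast\subset S'$ witnesses the compression; shrink it to a regular neighborhood of $C_1\cup C_2\cup\gamma$, where $C_1,C_2$ are $w$-boundary components of $S'$ and $\gamma$ is a proper arc joining them. Since the surgery avoided a collar of the $w$-boundary, $C_1$, $C_2$, and such a collar form a subsurface identified with part of $S$, so it suffices to isotope $\gamma$ into this unchanged region. In each new piece, a subarc of $\gamma$ has both endpoints on the unique polygonal boundary component, hence is boundary-parallel in a disk or an annulus and can be pushed into the collar $\partial_i\times[0,\varepsilon)$; an ambient isotopy of $S'$ carrying $P^\ast$ along preserves the $f'$-homotopy class of its third boundary component. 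The resulting pair of pants lies in the region of $S'$ identified with a subsurface of $S$ and witnesses boundary-compressibility of $(f,S)$, a contradiction. I expect this last step --- tracking a boundary-compression of $S'$ back down to $S$ --- to be the main technical obstacle, since it relies on the surgery being genuinely supported away from the $w$-boundary and on the boundary-parallel pushing inside each new piece; the construction of the pieces and the Euler-characteristic bookkeeping are routine, parallel to the analogous normal-form results in \cite{Chen:sclBS, Chen:Kervaire}.
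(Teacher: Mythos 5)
Your proof is correct, but it takes a more elaborate route than the paper's. The paper's surgery is pure deletion: disk pieces are left untouched, and for a non-disk piece $P$ (so $\chi(P)\le 0$) one keeps only an annular collar $N$ of each polygonal boundary component, declares the new boundary circle of $N$ an $A$-boundary (regardless of its winding class), and throws the rest of $P$ away. Because each retained $N$ has $\chi(N)=0\ge\chi(P)$ and no material is ever added, the Euler characteristic estimate is immediate, and boundary-incompressibility is a one-liner: $S'$ is a subsurface of $S$, so any compressing pair of pants in $S'$ already sits in $S$. Your version additionally caps the trivial-winding polygonal boundaries with disks and runs the surgery on every piece including disks. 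That is harmless, but it is what makes your per-piece inequality $\#\{i:z_i=1\}\ge\chi(P)$ necessary (together with the side observation that a disk piece's unique polygonal boundary has trivial winding), and, more substantially, it forces you to rerun the isotopy argument of Lemma~\ref{lemma: reduced} to push a hypothetical boundary-compression of $S'$ off the newly glued disks and back into the subsurface identified with $S$ --- exactly the step you flagged as the main obstacle. Your treatment of that step is correct (subarcs of the compressing arc $\gamma$ meeting a new disk-piece or annulus-piece have both endpoints on the polygonal boundary, hence are boundary-parallel and can be isotoped into the untouched collar), but the paper avoids it entirely by never adding material. The only thing your construction buys is that all new $A$-boundaries represent nontrivial conjugacy classes, a property that is never used.
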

\begin{proof}
    By Lemma \ref{lemma: reduced}, we may assume that $(f,S)$ is reduced. 
    Decompose $S$ into the normal form by the process above. 
    Then each piece must contain at least one polygonal boundary component, since otherwise this piece must be a component of $S$ disjoint from the $w$-boundary.
    
    In particular, any piece homeomorphic to a disk must have a unique polygonal boundary component, thus it is a disk-piece.

    Consider any piece $P$ that is not a disk. 
    Then $\chi(P)\le 0$. Let us assume that $P\subset S_A$, and the other case is similar.
    For each polygonal boundary component, cut out a tubular neighborhood $N$ of it. Then $N$ is an annulus and we treat the new boundary component of $N$ as an $A$-boundary as it is mapped to a loop in $\widehat{X}_A$. This cuts $P$ into some $P'$ homeomorphic to $P$ together with finitely many annuli like $N$. 
    So $\chi(P')=\chi(P)\le 0$, and throwing $P'$ away does not increase $-\chi^-(S)$. Moreover, the operation above is performed away from the polygonal boundaries, so it does not affect the gluing of $S_A$ with $S_B$ and does not change the $w$-boundary; see Figure~\ref{fig: simple}.
    
    Applying this to each such piece $P$ in $S$, denote the new surface as $S'$ and define $f'$ as the restriction of $f$. This makes $(f',S')$ a reduced $w$-admissible surface with $-\chi^-(S')\le -\chi^-(S)$ and $\deg(S')= \deg(S)$. Moreover, the normal form of $S'$ consists of disk-pieces and annuli-pieces by construction.
    Since we are just deleting a subsurface of $S$ to obtain $S'$, boundary-incompressibility of $S$ implies that of $S'$.
\end{proof}

Finally, we give a formula to compute the Euler characteristic of a $w$-admissible surface $S$ in a simple normal form, which we will use in the next section for our estimates.

\begin{lemma}\label{lemma: Euler char formula}
    For any $w$-admissible surface $S$ in a simple normal form, we have
    $$
        -\chi^-(S)=-\chi(S)=\sum_P \left[\frac{d(P)}{2} - \chi(P)\right],
    $$
    where the summation is taken over all pieces $P$ in the decomposition of $S$.
\end{lemma}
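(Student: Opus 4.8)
The plan is to derive the formula from additivity of the Euler characteristic across the decomposition $S=S_A\cup_F S_B$ furnished by the simple normal form, together with a combinatorial count of the turns. Since a surface in simple normal form is in particular reduced, it has no sphere or disk components, so $\chi^-(S)=\chi(S)$; it therefore suffices to prove $-\chi(S)=\sum_P[d(P)/2-\chi(P)]$.

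First I would set up the count. Recall that $F=f^{-1}(\star)$ is a finite disjoint union of properly embedded arcs (the turns); write $t$ for their number, so $\chi(F)=t$ since each arc is contractible. Because $S_A\cap S_B=F$ and $S_A\cup S_B=S$, inclusion--exclusion for the Euler characteristic gives
$$\chi(S)=\chi(S_A)+\chi(S_B)-\chi(F)=\sum_P\chi(P)-t,$$
where the sum runs over all pieces $P$, using that $\chi(S_A)$ (resp. $\chi(S_B)$) is the sum of $\chi(P)$ over the pieces it contains.

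Next I would express $t$ through the valences. By definition of the simple normal form, the unique polygonal boundary component of each piece $P$ has $2d(P)$ sides, alternating between $d(P)$ segments on the $w$-boundary of $S$ and $d(P)$ turns. Conversely, each arc $\gamma\subset F$ is properly embedded, and since $\star$ locally separates $X$ into the $\widehat{X}_A$-side and the $\widehat{X}_B$-side, one side of $\gamma$ lies in $S_A$ and the other in $S_B$; hence, after cutting $S$ along $F$, the arc $\gamma$ becomes exactly one turn on a piece of $S_A$ and exactly one turn on a piece of $S_B$. Summing over pieces gives $\sum_{P\subset S_A}d(P)=t=\sum_{P\subset S_B}d(P)$, so $\sum_{\text{all }P}d(P)=2t$. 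Substituting $t=\tfrac12\sum_P d(P)$ into the previous display yields $-\chi(S)=\sum_P[d(P)/2-\chi(P)]$.

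I do not anticipate a real obstacle; the only delicate point is the bookkeeping in the turn count --- checking that each arc of $F$ contributes exactly one turn on the $S_A$-side and exactly one on the $S_B$-side, so that turns are neither double-counted nor omitted. This uses that $F$ contains no loop component (guaranteed by reducedness, so that $\chi(F)$ is literally the number of arcs) and that $\star$ locally disconnects a neighborhood in $X$ into its two halves $\widehat{X}_A$ and $\widehat{X}_B$.
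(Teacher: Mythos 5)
Your proof is correct and follows essentially the same route as the paper: $\chi^-(S)=\chi(S)$ from reducedness, Mayer--Vietoris (inclusion--exclusion) for $\chi(S)=\chi(S_A)+\chi(S_B)-\chi(F)$, and the observation that each proper arc of $F$ is counted once as a turn on a piece of $S_A$ and once on a piece of $S_B$, giving $|F|=\tfrac12\sum_P d(P)$. The extra justification you give for the two-sided count of each arc (via $\star$ locally separating $X$) is a sound elaboration of what the paper leaves implicit.
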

\begin{proof}
    Any $S$ in simple normal form is reduced. 
    Hence $S$ has no closed or disk components (as $w$ has infinite order). Thus $\chi^-(S)=\chi(S)$.
    
    Now by Mayer--Vietoris, using the fact that $S_A$ and $S_B$ are disjoint unions of pieces $P$, we have 
    $$\chi(S)=\chi(S_A)+\chi(S_B)-\chi(F)=\sum_P \chi(P) - |F|,$$
    where $|F|$ denotes the number of proper arcs in $F$ (each having Euler characteristic $1$).
    
    Note that each piece witnesses $d(P)$ proper arcs in $F$, namely the turns in $P$, and each proper arc in $F$ is counted exactly twice, by a piece in $S_A$ and one in $S_B$. Thus $|F|=\sum_P d(P)/2$. Combining this with the formula above, the desired equation 
    follows.
\end{proof}

\section{The spectral gap property via stackings}\label{sec: topol argument}
The purpose of this section is to prove Theorem~\ref{thmA: spectral gap from stacking}, which we restate as the following stronger version, allowing $A$ and $B$ to be uncountable. This is not essential, since a $w$-admissible surface in simple normal form only sees the countable subgroups $A'$ and $B'$. So we will assume $A$ and $B$ to be countable in the proof below.


\begin{theorem}\label{thm: spectral gap from stacking}
    For any free product $G=A\star B$ and a cyclically reduced word $w=a_1b_1\ldots a_n b_n\in A\star B$ with $n\in\Z_+$, $a_i\in A\setminus\{id\}$ and $b_i\in B\setminus\{id\}$. If there is a relative stacking of $w$ in $G'=A'\star B'$, where $A'$ (resp. $B'$) is the subgroup generated by $a_1,\ldots,a_n$ (resp. $b_1,\dots,b_n$),
    then for any boundary-incompressible $w$-admissible surface $S$ in $G$, we have
    $$-\chi^-(S)\ge \deg(S).$$
\end{theorem}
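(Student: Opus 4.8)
The plan is to combine Lemma~\ref{lemma: simple normal form}, which reduces us to a $w$-admissible surface $S$ in simple normal form, with the Euler characteristic formula of Lemma~\ref{lemma: Euler char formula}, which expresses $-\chi^-(S) = \sum_P [d(P)/2 - \chi(P)]$, and then use the relative stacking to produce a combinatorial labeling on $S$ that forces $\sum_P [d(P)/2 - \chi(P)] \ge \deg(S)$. So after the reduction I would first spell out how a relative stacking $\sigma: G' = A'\star B' \to \textup{Homeo}^+(\R)$ together with the stable trajectory point $x$ gives rise to a way to assign, to every juncture appearing on the $w$-boundary of $S$, a real number — essentially the image of $x$ under the prefix of $w$ that labels the arc of $S^1_w$ immediately preceding that juncture. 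Because the trajectory $\Omega(w,x)$ is stable, these $|w|$ values attached to the junctures of a degree-one copy of $w$ are pairwise distinct, and the condition $x\cdot\sigma(w) = x$ makes the labeling consistent around each $w$-boundary circle (of any degree $d$, one reads the $d|w|$ junctures cyclically, the labels cycling through the same multiset $d$ times but each occurrence distinct as we go around). The key point is that the order structure on $\R$ then gives a cyclic order, or at least a well-defined way to compare consecutive junctures, along each $w$-boundary circle.

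Next I would transport this labeling onto the pieces. Each turn (proper arc in $F$) of a piece $P \subset S_A$ connects two junctures on the $w$-boundary, and the two segments adjacent to that turn inside $P$ are mapped by $f$ to loops representing generators $a_i^{\pm1}$; the action $\sigma$ restricted to $A'$ moves the real-number label of one endpoint of a turn to that of the adjacent turn along the polygonal boundary of $P$. Going all the way around the $2d(P)$-gon boundary of $P$, the composite of these homeomorphisms is $\sigma$ of the winding class of $P$, which must fix the starting label (since the labels come back to themselves around the circle). The heart of the argument is then a counting/orientation estimate on a single piece: reading the $d(P)$ labels at the turn-endpoints of $P$ in cyclic order around $\partial P$, and using that $\sigma(A')$ acts by orientation-preserving homeomorphisms, I want to show that the number of "sign changes" or the way the labels are interleaved forces $d(P)/2 - \chi(P) \ge (\text{number of times a copy of } w \text{ passes through } P)$, suitably normalized — the annulus pieces ($\chi(P)=0$) contribute the bulk, while disk pieces ($\chi(P)=1$) are constrained because a disk piece has trivial winding class, so its labels must be "monotone" in a way that costs extra valence. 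Summing over all pieces, with the bookkeeping that each degree-$|n_i|$ copy of $w^{n_i}$ on the $w$-boundary threads through exactly $|n_i|\cdot n$ turns and hence is "seen" with total weight $|n_i|$ after dividing by $n$, yields $\sum_P[d(P)/2 - \chi(P)] \ge \sum_i |n_i| = \deg(S)$.

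The step I expect to be the main obstacle is precisely this per-piece estimate: formalizing how the orientation-preserving hypothesis on the $\sigma$-action, combined with the distinctness of labels guaranteed by stability, bounds below the quantity $d(P)/2 - \chi(P)$. The subtlety is that a piece can be complicated (many sides, an annulus with nontrivial winding), and one must rule out configurations where a single copy of $w$ re-enters the same piece many times "for free." I anticipate the right framework is to think of the cyclic sequence of labels around $\partial P$ as a cyclic word in $\R$, note that the adjacent segments force the successive labels to be related by fixed elements of $\sigma(A')$ (or $\sigma(B')$), and then apply an intermediate-value/winding-number argument: the map $\sigma(\text{winding class})$ fixes each label, so between consecutive fixed points the homeomorphism is either above or below the diagonal, and each "polygonal excursion" that the $w$-boundary makes through $P$ must be accompanied by enough turns — this is where boundary-incompressibility is used, to prevent a $w$-boundary and a $w^{-1}$-boundary from cancelling across a thin band. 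Getting the accounting exactly right so that the constant works out to $\deg(S)$ rather than something weaker is the delicate part; I would isolate it as a standalone combinatorial lemma about labeled polygons before assembling the global inequality.
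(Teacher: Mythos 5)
Your proposal gets several things right: the reduction to simple normal form via Lemma~\ref{lemma: simple normal form}, the Euler characteristic formula of Lemma~\ref{lemma: Euler char formula}, the idea that the stacking produces an $\R$-valued labeling $\lamhat$ of the junctures on the $w$-boundary with distinct values per period, and the heart of the disk-piece argument (trivial winding class plus the orientation-preserving action forces the cyclic sequence of labels to be non-monotone). You have also correctly sensed where boundary-incompressibility enters: it is exactly what guarantees that the two junctures joined by a turn have distinct $\lamhat$-values (otherwise, by injectivity of $\lambda$, they correspond to the same juncture of $J_w$ and a neighborhood of the two boundary components together with the turn is a boundary-compressing pair of pants), so that each turn acquires a well-defined $\lamhat$-orientation.

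There is, however, a genuine gap in the accounting, and it is precisely the step you flag as the main obstacle. You propose a per-piece bound of the form $d(P)/2 - \chi(P) \ge (\text{number of copies of } w \text{ passing through } P)$, to be summed directly to $\deg(S)$ after some "dividing by $n$" normalization. This is not obviously true (e.g.\ an annulus-piece with $d(P)=2$ and no sign changes contributes only $1$ to the left side while possibly touching several $w$-boundary components), and the normalization scheme is not spelled out. What is missing is a \emph{global} ingredient, not a per-piece one: classify each segment of the $w$-boundary as consistent or inconsistent according to whether the two turns at its endpoints have the same $\lamhat$-orientation relative to the segment or not, and then observe that because $\lambda: J_w\to\R$ is injective there is a unique maximal juncture $j_M$ and a unique minimal juncture $j_m$ in $J_w$; on a $w$-boundary component of degree $d$, the $d$ copies of $j_M$ and the $d$ copies of $j_m$ alternate around the circle, and the turn at a copy of $j_M$ must point away while the turn at a copy of $j_m$ must point toward it, forcing at least one inconsistent segment between each adjacent max/min pair, hence at least $2d$ inconsistent segments on that component. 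Summing gives at most $(|w|-2)\deg(S)$ consistent segments in total. Combining this global count with the per-piece bound $\sc(P)\ge 2\chi(P)$ (where $\sc(P)$ is the number of consistent segments in $P$; evenness is automatic, and $\sc(P)\ge 2$ for disk-pieces is your winding-class argument) yields
$$-\chi(S)=\sum_P\Bigl[\tfrac{d(P)}{2}-\chi(P)\Bigr]\ge\tfrac12\sum_P\bigl[d(P)-\sc(P)\bigr]\ge\tfrac12\bigl[|w|\deg(S)-(|w|-2)\deg(S)\bigr]=\deg(S).$$
Without the max/min-juncture observation and the consistent/inconsistent dichotomy tied to the $\lamhat$-orientation on turns, the inequality does not assemble, so as written the proposal is incomplete at its crux.
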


We use the setup as in Section~\ref{subsec: normal form}. 
By Lemma~\ref{lemma: simple normal form}, it suffices to prove the theorem for any $(f,S)$ that is in simple normal form. For our estimate of Euler characteristic, we need to put some additional structure on the pieces using the relative stacking.

A \emph{prefix} $u$ of $w$ is a subword of $w$ of the form $u=a_1b_1\ldots a_i$ or $u=a_1b_1\ldots a_ib_i$ for for some $1\le i\le n$. There are exactly $|w|=2n$ of them.

As in Section~\ref{subsec: normal form}, we view $w$ as a map $f_w: S^1_w\to X$ so that the set $J_w=f_w^{-1}(\star)$ of junctures divides 
the oriented circle $S^1_w$ into $|w|=2n$ oriented arcs 
$\alpha_1,\beta_1,\ldots,\alpha_n,\beta_n$ in cyclic order.
The map $f_w$ takes each arc $\alpha_i$ (resp. $\beta_i$) to a loop based at $\star$ representing $a_i\in A$ (resp. $b_i\in B$).
Note that the junctures correspond to prefixes $u$ of $w$, by thinking of a juncture as where the prefix $u$ ends, where the juncture between $\beta_n$ and $\alpha_1$ corresponds to the full word $w$.

Recall that a relative stacking of $w$ gives a right action of $G$ on $\R$ and a special point $x$ whose trajectory $\Omega(w,x)$ is stable (i.e. this is a multiset without repeated elements and also $x\cdot w=x$), where each point in the trajectory is $x\cdot u$ for a nonempty prefix $u$ of $w$. Hence a relative stacking gives an assignment $\lambda: J_w\to \R$, where the value of the juncture corresponding to a prefix $u$ is $x\cdot u$. We record the data of a relative stacking into properties of this assignment $\lambda$ as follows:

\begin{lemma}\label{lemma: lambda}
    Given a relative stacking of $w$, the assignment $\lambda: J_w\to \R$ is \emph{injective}, with the property that, for each arc $\alpha_i$ (resp. $\beta_i$) on $S^1_w$ going from a juncture $j$ to another juncture $j'$, we have $\lambda(j)\cdot a_i=\lambda(j')$ (resp. $\lambda(j)\cdot b_i=\lambda(j')$), for the action given by the relative stacking.
\end{lemma}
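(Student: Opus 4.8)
The plan is to unwind the two definitions involved and check that the claimed properties are immediate consequences of the stability of the trajectory $\Omega(w,x)$. The assignment $\lambda$ is defined by $\lambda(j) = x\cdot u$ where $u$ is the nonempty prefix of $w$ ending at the juncture $j$; there are $|w| = 2n$ junctures, in bijection with the $2n$ nonempty prefixes $a_1, a_1b_1, a_1b_1a_2, \ldots, a_1b_1\cdots a_nb_n = w$, where I read a juncture on $S^1_w$ as the endpoint of the arc corresponding to the last letter of $u$.

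First I would verify injectivity. By Definition~\ref{def: relative stacking}, the trajectory $\Omega(w,x) = \{x\cdot u : u \text{ a nonempty prefix of } w\}$ is stable, and part (1) of stability says every element of $\Omega(w,x)$ occurs exactly once, i.e. the multiset has no repeats. Hence the map sending a nonempty prefix $u$ to $x\cdot u$ is injective, which is precisely injectivity of $\lambda$ after translating through the prefix--juncture bijection. One subtlety to address: the juncture between $\beta_n$ and $\alpha_1$ is identified with the full word $w$, and stability part (2) gives $x\cdot w = x$; I should note this is consistent because this juncture is the ``start'' juncture and the value $x = x\cdot w$ is one of the trajectory points (the prefix $u = w$ itself), so no collision with $x$ arises unless $x\cdot u = x$ for a proper prefix, which is excluded by injectivity on $\Omega(w,x)$.

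Next I would check the arc condition. Fix an arc $\alpha_i$ going from juncture $j$ to juncture $j'$ in the orientation of $S^1_w$. By construction of $f_w$, the arc $\alpha_i$ is mapped to the loop representing $a_i$, and the juncture $j$ preceding $\alpha_i$ corresponds to the prefix $u = a_1b_1\cdots a_{i-1}b_{i-1}$ (or the empty prefix when $i=1$, whose value we take to be $x$ since $x\cdot w = x$), while $j'$ corresponds to $u a_i = a_1b_1\cdots a_{i-1}b_{i-1}a_i$. Then $\lambda(j)\cdot a_i = (x\cdot u)\cdot a_i = x\cdot(u a_i) = \lambda(j')$, using that $\sigma$ is a right action. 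The case of $\beta_i$ is identical with $b_i$ in place of $a_i$: $\beta_i$ runs from the juncture corresponding to $a_1b_1\cdots a_i$ to the one corresponding to $a_1b_1\cdots a_ib_i$, and $\lambda(j)\cdot b_i = x\cdot(a_1b_1\cdots a_i b_i) = \lambda(j')$.

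The main (and essentially only) obstacle is bookkeeping: making the prefix--juncture correspondence precise, especially the wrap-around juncture between $\beta_n$ and $\alpha_1$ which must simultaneously be read as ``after $w$'' (value $x\cdot w$) and ``before $\alpha_1$'' (value $x$, as the base of the loop $a_1$) --- these agree exactly by stability part (2). There is no hard analysis here; the lemma is purely a repackaging of Definition~\ref{def: relative stacking} into the form needed for the Euler characteristic estimate in the next section, so I would keep the proof to a short paragraph verifying these two points.
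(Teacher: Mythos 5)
Your proof is correct and follows essentially the same route as the paper: injectivity from the no-repeats condition in stability, and the arc condition from the right-action formula $(x\cdot u)\cdot a_i = x\cdot(ua_i)$, with the wrap-around juncture between $\beta_n$ and $\alpha_1$ handled by $x\cdot w = x$. The only cosmetic difference is that the paper's convention assigns that juncture to the prefix $w$ rather than to an "empty prefix," but you correctly note that the two readings give the same value by stability condition (2).
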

\begin{proof}
    The injectivity of $\lambda$ follows from the fact that the elements in $\Omega(w,x)$ are distinct. The other property is by definition except for the arc $\alpha_1$: It goes from the juncture $j$ corresponding to $w$ to the juncture $j'$ corresponding to the prefix $a_1$, but note that $\lambda(j)=x\cdot w=x$, so $\lambda(j)\cdot a_1 =x \cdot a_1=\lambda(j')$ as desired.
\end{proof}
The properties of $\lambda$ described above involve the right action from the relative stacking, and essentially encapsulate the stability of $\Omega(w,x)$.
In what follows, we will just work with the assignment $\lambda$ instead of the relative stacking.

In this form, interpreted topologically, this is analogous to the notion of stacking in \cite{LouderWilton:stacking}. In fact, one can generalize our notion of relative stacking to the setting of graphs of groups with trivial edge groups. If the vertex groups are also trivial, then it is equivalent to stacking in the sense of \cite{LouderWilton:stacking}.
Our definition is inspired by but different from the notion of relative stacking in \cite{Millard}.

Recall from Section~\ref{subsec: normal form} that the restriction of the map $f$ to each $w$-boundary component $C$ of $S$ of some degree $d\ge 1$ factors as a map of the form $C\cong S^1\stackrel{p}{\to} S^1_w\stackrel{f_w}{\to} X$ for a covering map $p$ of degree $d$. The map $p$ pulls back the junctures $J_w$ on $S^1_w$ to junctures $J_C\defeq p^{-1}(J_w)$ on $C$, which cut $C$ into $d|w|$ segments, each labeled by some $a_i^s$ or $b_i^s$, where $s=\pm1$ records whether $p$ is orientation-preserving when the $C$ is equipped with the orientation induced from $S$.

The map $p$ pulls back the assignment $\lambda$ to an assignment $\lamhat_C: J_C\to \R$ defined as $\lamhat_C\defeq \lambda \circ p$, which is $|w|$-periodic when we go around $J_C$ in the cyclic order. Define $J_S$ as the disjoint union of $J_C$ over all $w$-boundary components $C$, and define $\lamhat: J_S\to \R$ component-wise using $\lamhat_C$. Note that $|J_S|=|w|\deg(S)$.

As $(f,S)$ is in simple normal form, it is reduced and $F=f^{-1}(\star)$ is a finite disjoint union of embedded proper arcs, which correspond to turns on the pieces. Each such arc $\gamma$ connects two junctures $j,j'\in J_S$, and we \emph{orient} $\gamma$ so that it points from $j$ to $j'$ if $\lamhat(j)>\lamhat(j')$ and the other way around if $\lamhat(j)<\lamhat(j')$. The basic but important observation below shows that this defines an orientation on each arc $\gamma\subset F$ whenever $(f,S)$ is boundary-incompressible.

\begin{lemma}
    If $(f,S)$ is boundary-incompressible and in simple normal form, then for any arc $\gamma \subset F$ connecting junctures $j,j'\in J_S$, we have $\lamhat(j)\neq\lamhat(j')$.
\end{lemma}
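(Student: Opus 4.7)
The plan is to prove the contrapositive: assume $\gamma \subset F$ connects junctures $j \in C$ and $j' \in C'$ (both on the $w$-boundary of $S$) with $\lamhat(j) = \lamhat(j')$, and construct a pair of pants $P \subset S$ witnessing boundary-compressibility. The first observation is immediate: since $\lambda\colon J_w \to \R$ is injective by Lemma~\ref{lemma: lambda} and $\lamhat = \lambda \circ p$, the hypothesis forces $p(j) = p(j')$, so $j$ and $j'$ correspond to the same juncture on $S^1_w$, namely the end of some prefix $u$ of $w$; write $w = uv$.

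Next I would extract a combinatorial compatibility constraint between $j$ and $j'$ from the simple normal form. The arc $\gamma$ appears as a turn on the polygonal boundary of a piece $P_A \subset S_A$. Since $P_A \subset S$, along any segment of $C$ lying in $\partial P_A$, the boundary orientation of $\partial P_A$ coincides with $C$'s orientation. Tracing $\partial P_A$ in its induced orientation shows that $\gamma$ is flanked on one side by an $A$-segment of $C$ ending at one endpoint, and on the other side by an $A$-segment of $C'$ starting at the other endpoint. Encoding this by a sign $\epsilon^A(j) \in \{\pm\}$ that records whether the $A$-segment of $C$ adjacent to $j$ precedes $(+)$ or follows $(-)$ $j$ in $C$'s orientation, the analysis gives $\epsilon^A(j) = -\epsilon^A(j')$. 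A routine check comparing $C$'s orientation to the direction in which $p$ covers $S^1_w$ then yields the formula $\epsilon^A(j) = \sign(n)\,\delta(u)$, where $n$ is the signed degree of $C$ and $\delta(u) \in \{\pm\}$ records whether $u$ ends in an $A$-letter or a $B$-letter; the analogous formula holds at $j'$ with the same $\delta(u)$, since $p(j) = p(j')$.

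Combining these gives $\sign(n) = -\sign(m)$, which in particular rules out $C = C'$ (where $n = m$ would force equal signs); so $C \neq C'$, and after relabeling I may assume $C$ represents $w^n$ and $C'$ represents $w^{-m}$ with $n, m \in \Z_+$. Now let $P$ be a regular neighborhood of $C \cup \gamma \cup C'$ in $S$, which is an embedded pair of pants with $\partial P = C \sqcup C' \sqcup L$ and $L$ in the interior of $S$. Because $f(\gamma) = \star$, any path supported on $\gamma$ is null-homotopic in $X$, so a short basepoint calculation using the $\pi_1$-relation of a pair of pants shows that $L$, with orientation induced from $P$, represents the conjugacy class of $w^{m - n}$. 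Hence $P$ witnesses boundary-compressibility, contradicting the hypothesis.

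The main technical subtlety is the orientation bookkeeping in the identity $\epsilon^A(j) = \sign(n)\,\delta(u)$: a small but fiddly case analysis across the sign of $n$ crossed with the $A$-vs.-$B$ type of the last letter of $u$. The pleasant payoff is that this bookkeeping automatically rules out the would-be problematic $C = C'$ case, so the argument only ever needs the standard neighborhood-of-$C \cup \gamma \cup C'$ pair-of-pants construction.
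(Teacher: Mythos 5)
Your proposal is correct and follows essentially the same route as the paper's proof: injectivity of $\lambda$ forces $j$ and $j'$ to lie over the same juncture of $S^1_w$, the piece $P_A$ containing $\gamma$ as a turn forces the adjacent segments to be $A$-labeled and hence the degrees of $C$ and $C'$ to have opposite signs (your $\epsilon^A$ bookkeeping is a notational repackaging of the paper's two-case analysis on $\sign(n)$), and the tubular neighborhood of $C\cup\gamma\cup C'$ then exhibits boundary-compressibility. No substantive differences.
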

\begin{proof}
    Let $C$ and $C'$ be the $w$-boundary components of $S$ that $j$ and $j'$ belong to, respectively. 
    Suppose $\lamhat(j)=\lamhat(j')$. Since the map $\lambda$ is injective by Lemma~\ref{lemma: lambda}, the junctures $j$ and $j'$ must correspond to the same juncture $j_*\in J_w$ on $S^1_w$. Assume that $j_*$ is the one between $\alpha_i$ and $\beta_i$ for some $1\le i\le n$. The case where $j_*$ is between some $\beta_i$ and $\alpha_{i+1}$ is similar.
    
    With the induced orientation from $S$, there is a segment $s_1$ on $C$ (resp. $s'_1$ on $C'$) ending at $j$ (resp. $j'$) and a segment $s_2$ on $C$ (resp. $s'_2$ on $C'$) starting at $j$ (resp. $j'$); see the left of Figure~\ref{fig: coorientation}. Suppose $C$ (resp. $C'$) represents the conjugacy class of $w^n$ for some $n\neq0 \in\Z$ (resp. $w^{n'}$ for some $n'\neq0 \in\Z$). There are two cases for the labels on $s_1$ and $s_2$ depending on the sign of $n$, and similarly on the $C'$ side:
    \begin{enumerate}
        \item If $n>0$ (resp. $n'>0$), then $s_1$ (resp. $s'_1$) is labeled by $a_i$ and $s_2$ (resp. $s'_2$) by $b_i$;
        \item If $n<0$ (resp. $n'<0$), then $s_1$ (resp. $s'_1$) is labeled by $b_i^{-1}$ and $s_2$ (resp. $s'_2$) by $a_i^{-1}$.
    \end{enumerate}

    We now observe that $n$ and $n'$ must have opposite signs, and in particular $C\neq C'$. 
    In fact, recall that $F$ cuts $S$ into two subsurfaces $S_A$ and $S_B$, each of which consists of pieces.
    The arc $\gamma$ appears as turns in exactly two pieces $P_A\subset S_A$ and $P_B\subset S_B$.
    Up to swapping $j$ and $j'$, let us assume that the turn on $P_A$ with the induced orientation from $P_A$ is from $j$ to $j'$; see the left of Figure~\ref{fig: coorientation}. 
    Then $s_1$ and $s'_2$ lie in $P_A$ and must be labeled by something in the group $A$, so we must have $n>0$ and $n'<0$ based on the possibilities listed above.
    
    Let $N$ be a tubular neighborhood of $C\cup \gamma\cup C'$, which is homeomorphic to a pair of pants with boundary components $C$, $C'$ and $L$ for some embedded loop $L$ in $S$. Note that $f$ maps $\gamma$ to $\star$, so it follows that $L$ with the induced orientation from $N$ represents the conjugacy class of $w^{-(n'+n)}$, which implies that $(f,S)$ is boundary-compressible; see Definition~\ref{def: boundary-incompressible} and take $m=-n'$. This contradicts our assumption and thus we cannot have $\lamhat(j)=\lamhat(j')$.
\end{proof}

\begin{figure}
	\labellist
	\small \hair 2pt
	\pinlabel $j$ at 70 45
    \pinlabel $j'$ at 75 95
    \pinlabel $s_1(a_i)$ at 45 40
    \pinlabel $s_2'(a_i^{-1})$ at 50 100
    \pinlabel $C$ at 30 25
    \pinlabel $C'$ at 35 115
    \pinlabel $s_2(b_i)$ at 100 40
    \pinlabel $s_1'(b_i^{-1})$ at 105 105
    \pinlabel $L$ at 40 72
    \pinlabel $L$ at 105 70
    \pinlabel $N$ at 60 72
    \pinlabel $\gamma$ at 80 72
    \pinlabel $P_A$ at 10 72
    \pinlabel $P_B$ at 135 70
    
    \pinlabel $j_1^+$ at 265 5
    \pinlabel $j_2^-$ at 315 5
    \pinlabel $j_2^+$ at 355 45
    \pinlabel $j_3^-$ at 355 95
    \pinlabel $j_3^+$ at 315 135
    \pinlabel $j_4^-$ at 265 135
    \pinlabel $j_4^+$ at 225 95
    \pinlabel $j_1^-$ at 225 45
    \pinlabel $\gamma_2$ at 290 10
    \pinlabel $\gamma_3$ at 350 70
    \pinlabel $\gamma_4$ at 290 130
    \pinlabel $\gamma_1$ at 232 70
    
    \pinlabel $s_1$ at 260 40
    \pinlabel $s_2$ at 320 40
    \pinlabel $s_3$ at 320 100
    \pinlabel $s_4$ at 260 100
    \pinlabel $P$ at 260 70
    
	\pinlabel $j_5^-$ at 375 45
    \pinlabel $j_5^+$ at 445 45
    \pinlabel $j_6^-$ at 445 95
    \pinlabel $j_6^+$ at 375 95
    \pinlabel $s_5$ at 410 50
    \pinlabel $s_6$ at 410 90
    \pinlabel $P'$ at 400 70
    
    \endlabellist
	\centering
	\includegraphics[scale=0.9]{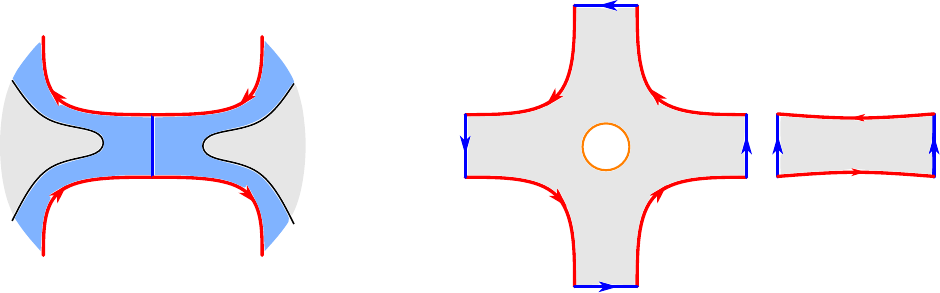}
	\caption{Left: The local structure of an arc $\gamma\in F$ connecting two junctures $j\in J_C$ and $j'\in J_{C'}$ that correspond to the same $j_*\in J_w$. A neighborhood $N$ of $C\cup C\cup \gamma$ witnesses boundary-compressibility.
    Right: Two pieces $P$ and $P'$ in the simple normal form of some $S$, marked with $\lamhat$-orientations. In this example, $\sc(P)=0$ and $\sc(P')=2$.}\label{fig: coorientation}
\end{figure}

With the lemma above, this defines an orientation on each arc in $F$ if $S$ is boundary-incompressible. Refer to this as the \emph{$\lamhat$-orientation}. Based on this, we make the following definition:
\begin{definition}
    For each segment $s$ on the $w$-boundary of $S$ connecting two junctures $j_1$ and $j_2$, let $\gamma_1$ and $\gamma_2$ be the arcs in $F$ that contain $j_1$ and $j_2$ respectively.
    There are four cases for the $\lamhat$-orientations on $\gamma_1$ and $\gamma_2$:
    \begin{enumerate}
        \item $\gamma_1$ points towards $j_1$, and $\gamma_2$ points towards $j_2$;\label{item: ++}
        \item $\gamma_1$ points away from $j_1$, and $\gamma_2$ points away from $j_2$;\label{item: --}
        \item $\gamma_1$ points towards $j_1$, and $\gamma_2$ points away from $j_2$; and\label{item: +-}
        \item $\gamma_1$ points away from $j_1$, and $\gamma_2$ points towards $j_2$.\label{item: -+}
    \end{enumerate}
    We say $s$ is \emph{consistent} if we have case (\ref{item: ++}) or (\ref{item: --}), and we say $s$ is \emph{inconsistent} otherwise.
    For example, in the right part of Figure~\ref{fig: coorientation}, the segment $s_i$ is inconsistent for $1\le i\le 4$ and is consistent for $i= 5,6$. 
\end{definition}

The lemma below is a key observation that helps us estimate the number of (in)consistent segments on the $w$-boundary of $S$: Each copy of $w$ (or $w^{-1}$) on $\partial S$ witnesses at least two inconsistent arrows.
\begin{lemma}\label{lemma: segment count}
    For any boundary-incompressible $w$-admissible surface $S$ in simple normal form, the total number of inconsistent segments is at least $2\deg(S)$, thus the total number of consistent segments is no more than $(|w|-2)\deg(S)$.
\end{lemma}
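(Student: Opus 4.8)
The plan is to translate the (in)consistency of a boundary segment into a condition on local signs at its two endpoints, and then to bound the number of sign changes around each $w$-boundary component using the extremal values of $\lamhat$.

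First I would attach to every juncture $j\in J_S$ a sign $\epsilon(j)\in\{+,-\}$: letting $\gamma_j\subset F$ be the unique arc of $F$ through $j$, set $\epsilon(j)=+$ if $\gamma_j$ points towards $j$ and $\epsilon(j)=-$ if it points away from $j$, in the $\lamhat$-orientation; this is well defined because the preceding lemma gives $\lamhat(j)\neq\lamhat(j')$ for the two endpoints $j,j'$ of any arc of $F$. Reading off the four cases in the definition, a segment $s$ with endpoints $j_1,j_2$ is inconsistent if and only if $\epsilon(j_1)\neq\epsilon(j_2)$ (cases (1) and (2) are exactly $\epsilon(j_1)=\epsilon(j_2)$). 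Hence, for a single $w$-boundary component $C$ of degree $d$, listing its junctures $j_1,\ldots,j_{d|w|}$ in cyclic order with $j_{d|w|+1}=j_1$, the inconsistent segments on $C$ are precisely the indices $k$ with $\epsilon(j_k)\neq\epsilon(j_{k+1})$, i.e. the sign changes of the cyclic word $\epsilon(j_1)\cdots\epsilon(j_{d|w|})$.

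Next I would pin down which junctures are forced to carry which sign. Since $\lamhat$ takes values in the finite set $\lambda(J_w)$, let $M$ and $\mu$ be its largest and smallest values; because $\lambda$ is injective (Lemma~\ref{lemma: lambda}) and $|J_w|=|w|\geq2$, we have $M>\mu$, and there is exactly one juncture of $S^1_w$ with $\lambda$-value $M$ and exactly one with $\lambda$-value $\mu$. If $\lamhat(j)=M$ then the other endpoint $j'$ of $\gamma_j$ has $\lamhat(j')\leq M$, and $\lamhat(j')\neq M$ by the preceding lemma, so $\lamhat(j')<\lamhat(j)$, the arc $\gamma_j$ points away from $j$, and $\epsilon(j)=-$. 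Symmetrically, $\lamhat(j)=\mu$ forces $\epsilon(j)=+$.

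Finally, I would fix a $w$-boundary component $C$ of degree $d=d(C)$ and count. The covering map $p\colon C\to S^1_w$ is $d$-fold and satisfies $\lamhat_C=\lambda\circ p$, so among $j_1,\ldots,j_{d|w|}$ exactly $d$ junctures have $\lamhat=M$ and exactly $d$ have $\lamhat=\mu$; moreover the single $M$-juncture and single $\mu$-juncture of $S^1_w$ split $S^1_w$ into two arcs each containing exactly one of them, so their $p$-preimages strictly alternate in the cyclic order on $C$. By the previous step these $2d$ junctures then carry strictly alternating signs, and between two cyclically consecutive ones (whose signs differ) there must be at least one sign change; since the $2d$ blocks delimited by these junctures partition the cyclic word, this yields at least $2d$ distinct sign changes, i.e. at least $2d(C)$ inconsistent segments on $C$. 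Summing over all $w$-boundary components gives at least $\sum_C 2d(C)=2\deg(S)$ inconsistent segments, and since the $w$-boundary carries $|J_S|=|w|\deg(S)$ segments in all, at most $|w|\deg(S)-2\deg(S)=(|w|-2)\deg(S)$ of them are consistent. The step I expect to require the most care is this last alternation claim, which genuinely uses both the injectivity of $\lambda$ (so $M$ and $\mu$ are each realized at a single juncture of $S^1_w$) and the covering structure $\lamhat_C=\lambda\circ p$ (so these single realizations lift to exactly $d$ cyclically interleaved junctures on $C$); everything else is bookkeeping.
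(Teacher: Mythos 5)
Your proof is correct and takes essentially the same route as the paper: you attach a sign to each juncture recording whether its $F$-arc points toward or away from it (the paper leaves this implicit), note that $\lamhat$-maximal junctures are forced to $-$ and $\lamhat$-minimal ones to $+$, observe that these extremal junctures alternate $d$ times around each degree-$d$ boundary component because $p$ is a $d$-fold covering, and conclude that each of the $2d$ arcs between consecutive extremal junctures contains at least one sign change (inconsistent segment). The sign bookkeeping via $\epsilon$ is a clean formalization of what the paper's proof does informally, but it is not a different argument.
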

\begin{proof}
    It suffices to prove the lower bound on the total number of inconsistent segments, as the total number of segments is $|w|\deg(S)$.

    Since $\lambda:J_w\to \R$ is injective by Lemma~\ref{lemma: lambda}, there is a unique juncture $j_M\in J_w$ (resp. $j_m\in J_w$) for which $\lambda(j_M)\geq \lambda(j)$ (resp. $\lambda(j_m)\leq \lambda(j)$) for all $j\in J_w$. Clearly $j_M\neq j_m$.

    For each $w$-boundary component $C$ of degree $d$, there are exactly $d$ junctures in $J_C$ corresponding to $j_M$ (resp. $j_m$), which we call \emph{maximal} (resp. \emph{minimal}). Moreover, the $d$ maximal and $d$ minimal junctures sit in an alternating way on $C$.

    For each maximal (resp. minimal) juncture $j$, the adjacent arc $\gamma\subset F$ must have $\lamhat$-orientation pointing away from (resp. towards) $j$. Thus between each adjacent pair of maximal and minimal junctures, there is at least one segment that is inconsistent. Thus we must have at least $2d$ inconsistent segments on each $w$-boundary component of degree $d$. Taking the sum over all $w$-boundary components completes the proof.
\end{proof}

Another key observation (Lemma~\ref{lemma: sign changes}) is to count the number of consistent segments in each piece $P$. Recall that the valence $d(P)$ of a (disk- or annulus-) piece $P$ is the number of segments on the unique polygonal boundary component of $P$. 
Among these $d(P)$ segments, denote the number of consistent segments as $\sc(P)$, the number of \emph{sign changes} in $P$.

The proof of Lemma~\ref{lemma: sign changes} below justifies the terminology. This is where we make use of the action from the relative stacking.

\begin{lemma}\label{lemma: sign changes}
    For any (disk- or annulus-) piece $P$, the number of sign changes $\sc(P)\in\Z_{\ge0}$ is even. Moreover, $\sc(P)>0$ for each disk-piece $P$.
\end{lemma}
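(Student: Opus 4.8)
The plan is to analyze the polygonal boundary component of a piece $P$ by walking around it and tracking a $\{+,-\}$-valued sign attached to each juncture, recording whether the incident arc of $F$ points towards or away from that juncture. Concretely, suppose $P\subset S_A$ (the case $P\subset S_B$ is symmetric). The polygonal boundary of $P$ has $d(P)$ sides on the $w$-boundary and $d(P)$ turns; number the junctures $j_1,\ldots,j_{2d(P)}$ cyclically, so that consecutive pairs $(j_{2k-1},j_{2k})$ bound a segment $s_k$ and consecutive pairs $(j_{2k},j_{2k+1})$ bound a turn $\gamma_k$ (indices mod $2d(P)$). Each $\gamma_k$ carries a $\lamhat$-orientation, which assigns to its two endpoints opposite signs in $\{+,-\}$ (with the convention that an endpoint gets $+$ if $\gamma_k$ points towards it). A segment $s_k$ is then consistent exactly when the signs at $j_{2k-1}$ and $j_{2k}$ agree (both $+$ or both $-$, which is cases (1)--(2) of the definition), and inconsistent when they disagree. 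So $\sc(P)$ is the number of sign-agreements among the $d(P)$ segment-pairs.

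The parity statement is then a counting argument around the cycle. Since each turn $\gamma_k$ contributes opposite signs at its two ends, going around the whole polygonal boundary the total number of sign changes between \emph{consecutive junctures} — counting both segment-steps and turn-steps — must be even (a cyclic sequence of signs changes sign an even number of times). The turns account for exactly $d(P)$ sign changes. Hence the number of sign changes among the segment-steps is also congruent to $d(P)$ mod $2$... wait, I instead count sign \emph{changes} at segments: the number of inconsistent segments has the same parity as $d(P)-d(P)=0$, i.e. is even, so the number of consistent segments $\sc(P)=d(P)-(\text{inconsistent})$ has parity $d(P)-d(P)=0$. Either way $\sc(P)$ is even; I would phrase this cleanly via "the total number of sign changes around a cycle of signs is even."

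For the second assertion — $\sc(P)>0$ when $P$ is a disk-piece — this is where the relative stacking (via Lemma~\ref{lemma: lambda}) enters, and I expect it to be the main obstacle. The idea: suppose $\sc(P)=0$, so every segment $s_k$ is inconsistent, meaning the signs at the junctures \emph{strictly alternate} $+,-,+,-,\ldots$ all the way around. Now use that $P$ is a disk-piece, so its polygonal boundary, read via $f$, represents the trivial conjugacy class in $A$: the product of the group elements labelling the segments $s_1,\ldots,s_{d(P)}$ (each an $a_i^{\pm1}$) is trivial in $A'$, and under the relative stacking action it fixes the relevant point of $\R$. On the other hand, the $\lamhat$-values at the junctures along $P$ are obtained by applying these group elements successively, and the alternating sign pattern forces the sequence $\lamhat(j_1),\lamhat(j_2),\ldots$ to be alternately a "local max" and "local min" relative to its turn-neighbors; tracking how $\lamhat$ moves along the segments and using injectivity of $\lambda$ and the $A'$-action being orientation-preserving, I would derive that the first and last $\lamhat$-values cannot agree — contradicting that we have returned to the start after applying a product that acts trivially. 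Making this last step precise (carefully distinguishing the behaviour of $\lamhat$ along a segment versus across a turn, and correctly using that the segment labels multiply to the identity in $A'$) is the delicate part; I would set it up by choosing the juncture of $P$ at which $\lamhat$ attains its maximum over $P$ and deriving that both adjacent turns point away from it, which is incompatible with strict alternation of signs unless some segment is consistent.
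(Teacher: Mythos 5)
Your parity argument for the first assertion is correct and amounts to the same count the paper does (the paper attaches a $\pm 1$ to each turn, recording whether the $\lamhat$-orientation matches the topological one, and observes $\sc(P)$ is the number of sign changes in that cyclic sequence; your bookkeeping on junctures is an equivalent repackaging). One small caution: mid-paragraph you first claim the number of inconsistent segments is even, which is wrong — it has the same parity as $d(P)$ — but your final line uses the correct parity, so the conclusion stands.

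The second assertion is where you have a real gap, and you sense it yourself. The closing idea you propose — take the juncture $j$ of $P$ where $\lamhat$ is maximal and argue that ``both adjacent turns point away from it'' — does not work: each juncture is an endpoint of exactly one turn and one segment, so there is only one adjacent turn at $j$. That turn does point away from $j$ (giving one sign), but the segment gives you no order information at all: $\lamhat$ is transported along a segment labelled $\varphi\in A'$ by the rule $\lamhat(j^-)\cdot\varphi=\lamhat(j^+)$, and $\varphi$ is an arbitrary orientation-preserving homeomorphism of $\R$, so $\lamhat$ can jump up or down across a segment in any way. Maximality of $\lamhat(j)$ therefore yields no contradiction with the alternating sign pattern. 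The ingredients you list (triviality of $\varphi_1\cdots\varphi_k$, orientation-preservation, injectivity of $\lambda$) are the right ones, but the mechanism that makes them interact is a \emph{chain} of strict inequalities, not a single extremum: assuming all turns are oriented the same way (say $\lamhat(j_{i-1}^+)>\lamhat(j_i^-)$ for all $i$), apply $\varphi_1$ to the inequality $\lamhat(j_k^+)>\lamhat(j_1^-)$ to get $\lamhat(j_k^+)\cdot\varphi_1>\lamhat(j_1^+)>\lamhat(j_2^-)$, then apply $\varphi_2$, and so on; after $k$ steps one obtains $\lamhat(j_k^+)=\lamhat(j_k^+)\cdot\varphi_1\cdots\varphi_k>\lamhat(j_k^+)$, a contradiction. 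This inductive transport of an inequality by the orientation-preserving segment labels is the step your sketch is missing, and replacing the max-over-$P$ heuristic by it is what the paper does.
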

\begin{proof}
    Equip the polygonal boundary of $P$ with the orientation induced from $P$.
    Recall that the polygonal boundary of $P$ alternates with turns and segments, so we can enumerate them in the cyclic order as
    $\gamma_1,s_1,\ldots, \gamma_k,s_k$,
    where $k=d(P)\in\Z_+$ is the valence of $P$, each $\gamma_i\subset F$ is a turn and each $s_i$ is a segment on some $w$-boundary component of $S$.
    Denote the endpoints of each $s_i$ as $j_i^-$ and $j_i^+$, which are junctures on some $w$-boundary, so that $s_i$ goes from $j_i^-$ to $j_i^+$. Then each $\gamma_i$ with the topological orientation (induced from $P$) goes from $j_{i-1}^+$ to $j_i^-$, where indices are taken mod $k$ and similarly below; see the piece $P$ in the right part of Figure~\ref{fig: coorientation}.
    
    The topological orientation on each $\gamma_i$ may or may not agree with the $\lamhat$-orientation defined earlier. Assign a number $\sign_i\in\{1,-1\}$ for each $1\le i\le k$ so that $\sign_i=1$ if and only if these two orientations on $\gamma_i$ agree.
    
    For each segment $s_i$, the topological orientation on $\gamma_i$ points towards $j_i^-$, and the one of $\gamma_{i+1}$ points away from $j_i^+$. Thus $s_i$ is consistent if and only if $\sign_{i-1}=-\sign_i$.
    
    Thus the number $\sc(P)$ of consistent segments is equal to the number of $i\in\{1,\ldots, k\}$ with $\sign_{i-1}=-\sign_i$; see the pieces $P$ and $P'$ on the right of Figure~\ref{fig: coorientation} as examples.
    This is why we refer to it as the number of sign changes in $P$.
    Then clearly $\sc(P)$ is even.

    Next we show $\sc(P)\neq0$ if $P$ is a disk-piece. 
    To simplify the notation in the proof below, let us assume that $P$ lies in $S_A$, and the other case works by symmetry.
    Then each segment $s_i$ is labeled by some element $\varphi_i\in A$, where $\varphi_i=a_t^{\pm1}$ for a letter $a_t$ in the cyclically reduced expression of $w$, where $1\le t\le n$.
    By Lemma~\ref{lemma: lambda} and the definition of $\lamhat$, we know $$\lamhat(j_i^-)\cdot \varphi_i = \lamhat(j_i^+).$$
    Moreover, since $P$ is a disk-piece, the winding class is trivial, so
    $$\varphi_1\varphi_2\cdots \varphi_k=id.$$
    
    Suppose $\sc(P)=0$. Then we have either $\forall i, \sign_i\equiv 1$ or $\forall i, \sign_i\equiv -1$. Assume without loss of generality that $\sign_i\equiv 1$ for all $i$, as the other case is similar; see the piece $P$ (assuming it is a disk) on the right of Figure~\ref{fig: coorientation} as an illustration.
    That is, for each $i$, the $\lamhat$-orientation of each $\gamma_i$ goes from $j_{i-1}^+$ to $j_i^-$, meaning $$\lamhat(j_{i-1}^+)>\lamhat(j_{i}^-).$$
    
    Then we have $\lamhat(j_k^+)>\lamhat(j_{1}^-)$ by taking $i=1$. As $\varphi_1$ acts by an orientation-preserving homeomorphism of $\R$, we have
    $$\lamhat(j_k^+)\cdot \varphi_1 > \lamhat(j_{1}^-)\cdot \varphi_1 =\lamhat(j_{1}^+)>\lamhat(j_2^-).$$
    Then applying $\varphi_2$ to the inequality $\lamhat(j_k^+)\cdot \varphi_1>\lamhat(j_2^-)$, we similarly obtain
    $$\lamhat(j_k^+)\cdot \varphi_1\varphi_2 > \lamhat(j_2^-)\cdot \varphi_2=\lamhat(j_2^+)>\lamhat(j_3^-).$$
    Inductively, this shows that, for all $1\le \ell\le k$, we have
    $$\lamhat(j_k^+)\cdot \varphi_1\cdots \varphi_\ell > \lamhat(j_{\ell}^+)>\lamhat(j_{\ell+1}^-).$$
    Taking the first inequality with $\ell=k$ and using $\varphi_1\varphi_2\cdots \varphi_k=id$, we observe that
    $$\lamhat(j_k^+)=\lamhat(j_k^+)\cdot \varphi_1\cdots \varphi_k>\lamhat(j_{k}^+),$$
    which gives a contradiction.
\end{proof}

\begin{corollary}\label{cor: sc estimate}
    For any disk- or annulus-piece $P$, we have
    $$\sc(P)\ge 2\chi(P).$$
\end{corollary}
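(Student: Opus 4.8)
The plan is to derive this immediately from Lemma~\ref{lemma: sign changes}, splitting into the two topological types of pieces allowed in a simple normal form. First I would handle the annulus-piece case: there $\chi(P)=0$, and since Lemma~\ref{lemma: sign changes} already records that $\sc(P)\in\Z_{\ge0}$, the inequality $\sc(P)\ge 0=2\chi(P)$ holds trivially. Then I would treat the disk-piece case: there $\chi(P)=1$, and Lemma~\ref{lemma: sign changes} gives both that $\sc(P)$ is even and that $\sc(P)>0$ for a disk-piece, so $\sc(P)$ is a positive even integer and hence $\sc(P)\ge 2=2\chi(P)$.

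I do not expect any genuine obstacle here, since all of the content has already been extracted into Lemma~\ref{lemma: sign changes}; the corollary is just the uniform repackaging $\sc(P)\ge 2\chi(P)$ that will be convenient to plug into the Euler characteristic formula of Lemma~\ref{lemma: Euler char formula} in the next step of the argument for Theorem~\ref{thm: spectral gap from stacking}. The only point worth stating carefully is that these are the only two cases: by Definition~\ref{def: normal form}, in a simple normal form every piece is either a disk-piece ($\chi=1$) or an annulus-piece ($\chi=0$), so the case analysis is exhaustive.

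Concretely, the write-up would read: if $P$ is an annulus-piece then $\chi(P)=0$ and $\sc(P)\ge0=2\chi(P)$ by Lemma~\ref{lemma: sign changes}; if $P$ is a disk-piece then $\chi(P)=1$ and, again by Lemma~\ref{lemma: sign changes}, $\sc(P)$ is a positive even integer, so $\sc(P)\ge2=2\chi(P)$. In either case $\sc(P)\ge2\chi(P)$, as desired.
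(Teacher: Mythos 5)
Your argument is correct and matches the paper's own proof essentially verbatim: both split into the annulus-piece case (where $\chi(P)=0$ and $\sc(P)\ge 0$ is automatic) and the disk-piece case (where Lemma~\ref{lemma: sign changes} gives $\sc(P)$ positive and even, hence $\ge 2=2\chi(P)$). Nothing further to add.
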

\begin{proof}
    If $P$ is an annulus-piece, then $2\chi(P)=0\le \sc(P)$. 
    If $P$ is a disk-piece, then $\chi(P)=1$, and $\sc(P)\ge2$ by Lemma~\ref{lemma: sign changes}.
\end{proof}

Now we are in a place to prove Theorem~\ref{thm: spectral gap from stacking}.
\begin{proof}[Proof of Theorem~\ref{thm: spectral gap from stacking}]
    By Lemma~\ref{lemma: simple normal form}, we may assume that $S$ is in simple normal form. In particular, $\chi^-(S)=\chi(S)$.
    Using the notions above, note that for each piece $P$, the valence $d(P)$ counts the total number of all segments on its polygonal boundary, among which $\sc(P)$ of them are consistent segments. 
    We know the sum of $d(P)$ over all pieces $P$ in $S$ is $|w|\deg(S)$, and the sum of $\sc(P)$ is at most $(|w|-2)\deg(S)$ by Lemma~\ref{lemma: segment count}.
    Combining this with 
    Lemma~\ref{lemma: Euler char formula}
    and Corollary~\ref{cor: sc estimate}, we obtain the following estimate of $-\chi(S)$:    
    \begin{align*}
        -\chi(S)&=\sum_P \left[ \frac{d(P)}{2}-\chi(P)\right]\\
        &\ge \frac{1}{2}\sum_P \left[ d(P)-\sc(P)\right]\\
        &\ge\frac{1}{2} \left[|w|\deg(S)-(|w|-2)\deg(S)\right]\\
        &=\deg(S).\qedhere
    \end{align*}
\end{proof}

The proof presented here is similar in spirit to the one in \cite{DH91}, but the novelty of our approach is the usage of $\lamhat$ as our labels of junctures, and its construction using actions on the line as presented in the next section. Besides, one can also phrase the topological argument above in terms of the LP duality method in \cite[Section 4]{Chen:Kervaire}, where the cost function here is defined using the $\lamhat$-orientation. It can also be phrased using angle structures similar to the method used in \cite{Marchand}.

\section{The existence of relative stackings}\label{sec: rel stacking}
All actions in this section will be right actions, and for homeomorphisms $\lambda_1:X\to Y$ and $\lambda_2:Y\to Z$, the composition will be denoted as $\lambda_1\lambda_2$, rather than $\lambda_2\circ \lambda_1$.
A classical result states that a countable group $G$ admits a faithful action by orientation-preserving homeomorphisms on the real line if and only if it is right-orderable. Given a right-order on $G$, one may construct such an action on $\R$ by first constructing a suitable order-embedding $G\to \R$ so that the natural right action on the image extends to an action by orientation- homeomorphisms on $\R$ \cite{GOD}. This is the so called \emph{dynamical realization} of the order. Conversely, given an embedding $G\to \textup{Homeo}^+(\R)$, one can extract a right-invariant order on $G$ as follows. Choose an enumeration $(x_n)_{n\in \N}$ of $\Q$. Then $f<g$ if and only if for the smallest $n\in \N$ such that $x_n\cdot f\neq x_n\cdot g$, it holds that $x_n\cdot f<x_n\cdot g$. This is clearly a right invariant total order.

Consider two countable right-orderable groups $A$ and $B$.
Denote $G=A\star B$.
Consider an action $\sigma:G\to \textup{Homeo}^+(\R)$ (not necessarily faithful),
and a word $w=a_1b_1\ldots a_nb_n$ such that $a_i\in A\setminus \{id\}, b_i\in B\setminus \{id\}$. 
Recall that a word $w_1$ is a \emph{prefix} of $w$ if it is a nonempty word of the form $a_1b_1\ldots a_i$ or $a_1b_1\ldots a_ib_i$ for $1\leq i\leq n$.
This is a \emph{proper prefix} if it does not equal $w$.
An ordered pair of nonempty words $(w_1,w_2)$ in $G$ is a \emph{prefix pair} if $w_1$ and $w_1w_2$ are both prefixes of $w$ (note that $w_1$ will be a proper prefix of $w$ but it may be the case that $w_1w_2=w$). 
Recall from Definition~\ref{def: relative stacking} that given $x\in \R$, we define the \emph{trajectory} of $x$ under $w$ as the multiset of points: $$\Omega(w,x)=\{x\cdot \sigma(w_1)\mid w_1\text{ is a prefix of }w\}.$$
The trajectory of $x$ under $w$ is \emph{stable} if the multiset $\Omega(w,x)$ has no element that occurs more than once, and $x\cdot \sigma(w)=x$.

Let $\{I_n\}_{n\in X}$ be a collection of pairwise disjoint nonempty open intervals in $\R$ for some countable set $X$ which may be finite or infinite. Let $\tau_n:H\to \textup{Homeo}^+(I_n), n\in X$ be actions of some group $H$.
We can define a \emph{diagonal-product action}: $$\tau:H\to \textup{Homeo}^+(\R)\qquad \tau:=\circledast_{n\in X}\tau_n$$
as the action where each group element fixes each point in $\R\setminus \bigcup_{n\in X}I_n$, and: $$x\cdot \tau(\alpha)=x\cdot \tau_n(\alpha)\qquad \text{ whenever }x\in I_n, \alpha\in H$$
We define the following variation which we also call the diagonal-product, for notational simplicity.
Consider an action $\sigma: H\to \textup{Homeo}^+(\R)$ such that $\sigma$ pointwise fixes a nonempty open interval $I$.
Consider another action $\tau:H\to \textup{Homeo}^+(I)$. Then we define $\sigma\circledast \tau$ also as the diagonal-product action which agrees with $\sigma$ on $\R\setminus I$ and with $\tau$ on $I$. 

Given actions $\sigma: A\to \textup{Homeo}^+(\R)$ and $\tau: B\to \textup{Homeo}^+(\R)$, we define the action
$$\eta:A\star B\to \textup{Homeo}^+(\R)\qquad \eta\defeq \langle \sigma, \tau\rangle$$ as the action generated by these (using the defining property of free products).
We say that $\eta$ is \emph{generated by the actions} $\sigma$and $\tau$, since it is generated by the images of these actions in $\textup{Homeo}^+(\R)$.
Note that $\eta$ may not be faithful, even if $\sigma$ and $\tau$ are.

The purpose of this section is to prove Theorem \ref{thmA: Stacking}. 
If $w$ is a proper power, it is easy to see that a relative stacking cannot exist. So we focus on the hard direction, which is reformulated below.

\begin{theorem}\label{thm:StackingsRF}
Let $A,B$ be countable right-orderable groups and $G=A\star B$. Consider a cyclically reduced word 
$$w=a_1b_1\ldots a_nb_n\in G, \qquad a_i\in A\setminus \{id\}, b_i\in B\setminus \{id\}.$$
Assume that $w$ is not a proper power.
Then there exists an action $\sigma:G\to \textup{Homeo}^+(\R)$ and some $x\in \R$ such that $\Omega(w,x)$ is stable.
\end{theorem}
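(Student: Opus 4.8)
My plan is to reduce the problem to a finite collection of ``one-prefix'' problems and then solve each by an explicit construction. First, observe that a relative stacking asks for two things simultaneously: that $x\cdot\sigma(w)=x$, and that all the points $x\cdot\sigma(u)$, for $u$ a nonempty prefix of $w$, are distinct. The distinctness of $x\cdot\sigma(u_1)$ and $x\cdot\sigma(u_2)$ for two prefixes with, say, $u_1$ a prefix of $u_2$, is equivalent (after translating by $\sigma(u_1)^{-1}$ and setting $y=x\cdot\sigma(u_1)$) to $y\cdot\sigma(v)\neq y$ where $v=u_1^{-1}u_2$; writing $(u_1,v)$ this is exactly the notion of \emph{prefix pair} introduced in the text. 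There are only finitely many prefix pairs $(w_1,w_2)$, and because $w$ is cyclically reduced and \emph{not a proper power}, for every prefix pair the word $w_1w_2w_1^{-1}$ — equivalently $w_2$ read off as a subword of $w$ — is a nontrivial element of $G$ (this is the step where primitivity of $w$ is essential: it rules out $w_2$ being a full period of $w$, which would force $x\cdot\sigma(w_2)=x$). So it suffices to produce, for each prefix pair $(w_1,w_2)$, an action $\sigma$ of $G$ on $\R$ and a point $x$ with $x\cdot\sigma(w)=x$ but $x\cdot\sigma(w_1)\neq x\cdot\sigma(w_1w_2)$, and then ``diagonally combine'' these finitely many actions using the diagonal-product $\circledast$ construction set up in the section: place the $i$-th action on a disjoint open interval $I_i$, use the intermediate value theorem on each $I_i$ to find a fixed point $x_i$ of $\sigma_i(w)$ inside $I_i$ that still violates the $i$-th prefix pair, and then a final blow-up argument patches all the $x_i$ into a single point whose whole trajectory is stable. (This last amalgamation is the ``systematic blow-up'' mechanism advertised in Section~\ref{sec: structure of proof}; I would phrase it as: given an action realizing stability for a subset $T$ of prefix pairs and another realizing it for one more pair, one blows up the orbit of the relevant point and inserts a scaled copy to get an action realizing $T$ together with the new pair.)

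The heart of the matter is then the single-prefix problem: given one proper prefix $u=w_1$ of $w$, build $\sigma:A\star B\to\textup{Homeo}^+(\R)$ and a closed interval $I$ with $I\cdot\sigma(w)\subseteq I$ and $I\cdot\sigma(u)\cap I=\emptyset$. The first condition gives, by IVT, a genuine fixed point of $\sigma(w)$ in $I$; the second guarantees that point is moved by $\sigma(u)$, hence also distinguishes the prefix $u$-endpoint from both the starting point and the $w$-endpoint of the trajectory, which is what we need. To construct such a $\sigma$ I would use the ``dynamical arrangements'' encoding: choose faithful actions of $A$ and of $B$ on $\R$ coming from right-orders (dynamical realizations), then carefully interleave and rescale fundamental domains so that following the letters of $w$ one at a time from $I$ drags $I$ around and returns it inside itself, while following only the letters of the proper prefix $u$ sends $I$ off to a region disjoint from $I$. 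Concretely one tracks the images $I, I\cdot\sigma(a_1), I\cdot\sigma(a_1b_1),\dots$ as a combinatorial sequence of nested/disjoint interval patterns and arranges the generators of $A$ and $B$ to implement the required moves; freeness of $A\star B$ means there is no obstruction to prescribing the generators of the two factors independently, and right-orderability of $A$ and $B$ is what lets us realize the needed local moves by orientation-preserving homeomorphisms.

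The main obstacle I expect is precisely the bookkeeping in this single-prefix construction: one has to arrange the ``drag $I$ around and bring it back strictly inside itself'' behavior for $w$ while simultaneously forbidding the intermediate prefix $u$ from bringing $I$ back to meet itself, and do so uniformly enough that the $\circledast$-amalgamation over all prefix pairs goes through without the later insertions destroying the earlier stability conditions. In particular one must be careful that the blow-up used to add a new prefix pair is supported away from the trajectory points already pinned down, and that the final point $x$ really has $x\cdot\sigma(w)=x$ on the nose (not merely $x\cdot\sigma(w)$ close to $x$) — this is handled by always taking $I$ \emph{closed} with $I\cdot\sigma(w)\subseteq I$ and extracting the fixed point from $\bigcap_m I\cdot\sigma(w^m)$ or directly from IVT applied to $t\mapsto t\cdot\sigma(w)-t$. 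Once the single-prefix building block and the amalgamation lemma are in place, the theorem follows by running the amalgamation over the finite list of prefix pairs of $w$.
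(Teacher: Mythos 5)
Your overall plan matches the paper's proof structure: reduce to finitely many ``one-prefix'' constraints (the paper's Proposition~\ref{Lem:StackingsSimple}, reached from Proposition~\ref{Lem:Stackings} via the conjugation trick of Lemma~\ref{lemma: conjugation for systems}), build each single-prefix solution as a dynamical arrangement producing a closed interval $I$ with $I\cdot\sigma(w)\subseteq I$ and $I\cdot\sigma(w_1)\cap I=\emptyset$, and then amalgamate the finitely many solutions via the blow-up machinery (Lemma~\ref{Lem:System}). Two points in your sketch need correcting before the proof would actually go through.

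First, your attribution of where primitivity of $w$ is used is wrong, and the error is not cosmetic. You claim ``$w$ not a proper power'' is needed to ensure that for each prefix pair $(w_1,w_2)$ the element $w_2$ (equivalently $w_1w_2w_1^{-1}$) is nontrivial. But $w_2$ is a nonempty reduced subword of a cyclically reduced word in $A\star B$ and hence is \emph{always} a nontrivial group element, with no hypothesis on $w$ whatsoever. Your parenthetical (that a full-period prefix $w_2$ would be forced to fix any $w$-fixed point) correctly explains why a stacking \emph{cannot exist} for proper powers, but that is the easy necessity direction and does not tell you where primitivity enters the existence proof. In the paper it enters much deeper, inside Proposition~\ref{Lem:StackingsSimple} itself: the construction splits into the case $w_1 = a_1b_1\cdots a_k$ (no hypothesis needed) and the case $w_1 = a_1b_1\cdots a_k b_k$, where one must make the letters $b_k$ and $b_n$ push a single interval $J_n'$ in opposite directions (Lemma~\ref{lem: LOfact2}), which requires $b_k\neq b_n$. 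When $b_k=b_n$ one cyclically conjugates the system and retries, and ``not a proper power'' is exactly what guarantees this reduction loop halts — otherwise $w$ and $w_1$ would both be powers of a common subword of length $\gcd(|w|,|w_1|)$. If you carry out your plan believing you've already spent the primitivity hypothesis on a triviality, you will stall at this coincidence with nothing left to break it.

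Second, your description of the amalgamation conflates two incompatible constructions. In the main text you propose placing the $k$ single-prefix solutions on $k$ disjoint intervals via $\circledast$, finding a $w$-fixed point in each, and then ``patching all the $x_i$ into a single point.'' That cannot work as stated: under the diagonal-product action each $x_i$ has orbit contained in $I_i$ and sees only the $i$-th action, so it only witnesses the $i$-th constraint, and there is no operation that patches $k$ distinct points in $k$ distinct intervals into a single point witnessing all constraints. What actually works is the nested construction your parenthetical gestures at, which Lemma~\ref{Lem:System} makes precise: blow up the orbit of $x_1$ under $\sigma_1$ into intervals, insert a rescaled copy of $\sigma_2$ into the interval $I_0$ over $x_1$, and take the diagonal $H\to H\oplus H$ action on the product of the blown-up $\sigma_1$ with the inserted copies. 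The essential feature is that the blown-up $\sigma_1^b$ fixes $I_0$ pointwise on the $\Lambda_1$-equations and moves $I_0$ completely off itself on the $\Lambda_1$-inequations, so the inserted $\sigma_2$-point witnesses both lists at once; one then iterates over the finitely many prefix pairs.
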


We remark that in Theorem \ref{thm:StackingsRF}, we do not require the action $\sigma$ to be faithful. However, from such a $\sigma$, one can easily obtain an action satisfying the same conditions that is faithful. Let $\sigma_1:G\to \textup{Homeo}^+(0,1)$ be an action obtained by conjugating $\sigma$ via some homeomorphism $\nu:\R\to (0,1)$, and let $\sigma_2:G\to \textup{Homeo}^+(1,2)$ be a faithful action. Note that the latter exists since free products of left orderable groups are left orderable \cite{GOD}. Then the diagonal-product $\sigma_1\circledast \sigma_2$ is a faithful action of $G$ that satisfies the requirements of Theorem \ref{thm:StackingsRF}, as witnessed by the image $\nu(x)$ of the point $x$ which witnessed this for $\sigma$.

The following proposition will be a key step.

\begin{proposition}\label{Lem:Stackings}
Let $A,B$ be countable right-orderable groups and $G=A\star B$. Consider a cyclically reduced word 
$$w=a_1b_1\ldots a_nb_n\in A\star B, \qquad a_i\in A\setminus \{id\}, b_i\in B\setminus \{id\}.$$
Assume that $w$ is not a proper power.
Let $(w_1,w_2)$ be a prefix pair for $w$.
Then there exists an action of $\sigma:G\to \textup{Homeo}^+(\R)$ and some $x\in \R$ such that:
\begin{enumerate}
\item $x\cdot \sigma(w)=x$.
\item $x\cdot \sigma(w_1w_2)\neq x\cdot \sigma(w_1)$.
\end{enumerate}
\end{proposition}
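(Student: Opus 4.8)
The plan is to build the action $\sigma$ by hand, so that a single interval $I$ witnesses both conditions. Concretely, I want to find $\sigma:G\to\textup{Homeo}^+(\R)$ and a closed interval $I$ such that $I\cdot\sigma(w)\subseteq I$ while $I\cdot\sigma(w_1)\cap I=\emptyset$; then, as the introduction already sketches, the intermediate value theorem applied to $\sigma(w)$ on $I$ produces a fixed point $x\in I$, and since $x\cdot\sigma(w_1)\notin I\ni x$ we get $x\cdot\sigma(w_1)\neq x$, hence $x\cdot\sigma(w_1w_2)\neq x\cdot\sigma(w_1)$ once we also arrange $x\cdot\sigma(w)=x$ (note $w_1w_2$ is a prefix, so either $w_1w_2=w$, giving $x\cdot\sigma(w_1w_2)=x\neq x\cdot\sigma(w_1)$, or $w_1w_2$ is a proper prefix, and we handle it by the same interval-disjointness bookkeeping). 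The first step is therefore to reduce the proposition to this purely dynamical interval-positioning statement and to record exactly which prefixes of $w$ must move $I$ off itself.

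The main construction will be a ``dynamical arrangement'': I start from faithful dynamical realizations $\sigma_0:A\to\textup{Homeo}^+(\R)$ and $\tau_0:B\to\textup{Homeo}^+(\R)$ coming from right-orders on $A$ and $B$, and then perform iterated blow-ups along orbits, inserting rescaled copies of these actions (via the diagonal-product $\circledast$) to gain control of where each letter $a_i$, $b_i$ sends a chosen seed interval. The goal is to lay out a finite chain of disjoint intervals $I=I_0, I_1,\dots,I_{2n-1}$ indexed by the proper prefixes of $w$ (so $I_k$ is "where the prefix of length $k$ lands"), arranged monotonically along $\R$, such that applying the next letter carries $I_k$ into $I_{k+1}$, while applying the full word $w$ carries $I_{2n}=I\cdot\sigma(w)$ back inside $I_0=I$. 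Reading off the positions of these intervals, $I\cdot\sigma(w_1)=I_{|w_1|}$ is a distinct interval from $I=I_0$ (since $w_1$ is a proper, nonempty prefix and the arrangement keeps all the $I_k$ pairwise disjoint), which is exactly condition (2); condition (1) is the return $I\cdot\sigma(w)\subseteq I$.

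The hard part will be honoring the non-proper-power hypothesis. If $w$ were a proper power $v^m$, the periodicity would force the landing interval of the prefix $v$ to coincide with $I$ itself, obstructing the disjointness we need, so the argument must genuinely use that $w$ is primitive; combinatorially this manifests as: the prefixes of $w$ of positive length are all distinct as intervals, and more importantly one can choose the cyclic word structure so that the required "return inside $I$ but never equal to $I$ in between" is realizable. I expect to need a careful case analysis of how consecutive letters of $w$ interact (whether $a_ib_i\cdots$ can be forced forward monotonically without collision), and to invoke right-orderability of $A$ and $B$ repeatedly to find group elements acting with the prescribed sign/displacement on the nested intervals. Making the blow-up bookkeeping precise — specifying the intervals, the inserted actions, and checking that the generated free-product action $\eta=\langle\sigma,\tau\rangle$ has the claimed effect on $I$ — is the technical heart, and I would isolate it as a separate combinatorial lemma about "dynamical arrangements" before assembling the proof of the proposition.
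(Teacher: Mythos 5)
The high-level strategy you outline — build $\sigma$ and an interval $I$ with $I\cdot\sigma(w)\subseteq I$ and the relevant prefix image of $I$ disjoint from where you want it, then apply the intermediate value theorem to $\sigma(w)$ on $I$ — is exactly the skeleton of the paper's argument, and the language of catenations, diagonal-products, and blow-ups is all the right machinery. But the concrete plan you then propose does not work as stated, and the difficulty you would hit is precisely what the paper's more elaborate two-stage structure is designed to get around.

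The gap is the ``monotone chain of all prefix images.'' You propose to arrange disjoint intervals $I_0<I_1<\cdots<I_{2n-1}$ so that the $k$-th letter of $w$ carries $I_{k-1}$ into $I_k$ and the final letter $b_n$ carries $I_{2n-1}$ back inside $I_0$. This is strictly stronger than Proposition \ref{Lem:Stackings} (it is an attempt to realize a relative stacking in one shot), and it is obstructed by orientation-preservation whenever a letter of $w$ repeats. For example, take $w=a_1b_1a_2b_1$ with $a_1\neq a_2$; this is cyclically reduced and not a proper power, yet in your scheme $b_1$ would have to send $I_1$ to $I_2$ (rightward) and $I_3$ to inside $I_0$ (leftward), with $I_1<I_3$ but $I_2>I_0$ — impossible for an orientation-preserving homeomorphism. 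So the monotone arrangement fails for a large class of non-proper-power words, and the non-proper-power hypothesis cannot rescue it in the way you suggest (the periodicity obstruction you describe is not the operative one here). Note also that Proposition \ref{Lem:Stackings} asks only for a single prefix pair; insisting on all prefixes simultaneously is both unnecessary at this stage and, as the example shows, not directly achievable by a single linearly ordered catenation.

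What the paper does instead has two extra moving parts that you would need to add. First, it handles \emph{one} proper prefix at a time (Proposition \ref{Lem:StackingsSimple}, after a conjugation reduction via Lemma \ref{lemma: conjugation for systems}) and only later combines the separate solutions into a single action using the blow-up machinery of Lemma \ref{Lem:System}; this sidesteps the global order constraints that sink the monotone chain. Second, within the single-prefix construction, the catenation is not a one-way chain: it is a symmetric ``meeting in the middle'' arrangement $(I_1,J_1,\dots,I_n,J_n,L_1,\dots,L_k,J_n',I_n',\dots,J_1',I_1')$, where $w$ contracts an interval toward the middle from both sides while the prefix escapes through the $L$-$M$ block; and when the relevant letters collide ($b_k=b_n$ in Case 2), the paper conjugates the system to shift to a cyclic rotation of $w$ — this is exactly where the non-proper-power hypothesis is used, to guarantee the conjugation loop terminates. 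You would need both of these devices, or some substitute for them, to turn your sketch into a proof.
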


First, we will see how we can use Proposition \ref{Lem:Stackings} to supply the proof of Theorem~\ref{thm:StackingsRF}.
The main ingredient in this will be Lemma~\ref{Lem:System} below, for which we need a few definitions.
Let $H$ be a countable group.
Given a fixed variable $y$, we shall consider \emph{systems of equations and inequations over $H$}, or for brevity simply a \emph{system over $H$}. Each equation in such a system will be of the form $y\cdot \alpha=y$ and each inequation will be of the form $y\cdot \beta\neq y$ where $\alpha,\beta\in H$ are fixed group elements. 
A solution to the system will be a pair $(\tau,x)$ where $\tau:H\to \textup{Homeo}^+(\R)$ is an action (not necessarily faithful) and $x\in \R$, so that:
\begin{enumerate}
\item For each equation $y\cdot \alpha=y$ in the system it holds that $x\cdot \tau(\alpha)=x$
\item For each inequation $y\cdot \beta\neq y$ in the system, it holds that $x\cdot \tau(\beta)\neq x$.
\end{enumerate}
Such a system is called a \emph{finite system} if there are finitely many such equations and inequations.
Given a system $\Lambda$, we denote by $\Lambda^=$ the set of equations in $\Lambda$ and by $\Lambda^{\neq}$ the set of inequations in $\Lambda$.
Given a finite collection of finite systems $\Lambda_1,\ldots ,\Lambda_n$, we define a new system
$$\Xi(\Lambda_1,\ldots, \Lambda_n):=(\bigcup_{1\leq i\leq n}\Lambda_i^{\neq })\cup (\bigcap_{1\leq i\leq n}\Lambda_i^=).$$ 

For example, consider the Baumslag--Solitar group $\mathrm{BS}(1,2)=\langle f,g\mid f^{-1}gf=g^2\rangle$, and consider the system $\Lambda=\{y\cdot f=y, y\cdot g\neq y\}$.
Then the action $\tau:\mathrm{BS}(1,2)\to \textup{Homeo}^+(\R)$ given by $t\cdot \tau(f) =2t, t\cdot \tau(g)=t+1$ for $t\in \R$, and the point $0\in \R$ provides a solution. Indeed, one checks that $0\cdot \tau(f)=0$ and $0\cdot \tau(g)=1\neq 0$.

\begin{lemma}\label{Lem:System}
Given a countable group $H$, let $\Lambda_1,\ldots, \Lambda_m$ be finite systems over $H$.
If each $\Lambda_i$ is solvable over $H$, then $\Xi(\Lambda_1, \ldots ,\Lambda_m)$ is solvable over $H$.
\end{lemma}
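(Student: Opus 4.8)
The plan is first to reduce to the case $m=2$, and then to fuse the two given solutions by blowing up one of them at its distinguished point. \textbf{Reduction.} Directly from the definition of $\Xi$, the system $\Xi(\Lambda_1,\ldots,\Lambda_m)$ has inequation part $\Lambda_1^{\neq}\cup\cdots\cup\Lambda_m^{\neq}$ and equation part $\Lambda_1^=\cap\cdots\cap\Lambda_m^=$; in particular it is again a finite system, and $\Xi(\Lambda_1,\ldots,\Lambda_m)=\Xi\big(\Xi(\Lambda_1,\ldots,\Lambda_{m-1}),\Lambda_m\big)$. Since $\Xi(\Lambda_1)=\Lambda_1$, an induction on $m$ reduces everything to the statement that $\Xi(\Lambda_1,\Lambda_2)$ is solvable over $H$ whenever the finite systems $\Lambda_1,\Lambda_2$ over the countable group $H$ are both solvable.

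\textbf{Localizing to a stabilizer.} Choose solutions $(\tau_i,x_i)$ of $\Lambda_i$ for $i=1,2$, and let $G=\{h\in H:x_1\cdot\tau_1(h)=x_1\}$ be the $\tau_1$-stabilizer of $x_1$; it is a countable subgroup of $H$, and it contains every $\alpha$ appearing in an equation of $\Lambda_1^=$. Form the finite system over $G$
$$\Lambda'=\{\,y\cdot\alpha=y:(y\cdot\alpha=y)\in\Lambda_1^=\cap\Lambda_2^=\,\}\ \cup\ \{\,y\cdot\beta\neq y:(y\cdot\beta\neq y)\in\Lambda_2^{\neq}\text{ and }\beta\in G\,\}.$$
Every equation of $\Lambda'$ lies in $\Lambda_2^=$ and every inequation of $\Lambda'$ lies in $\Lambda_2^{\neq}$, so the restriction $(\tau_2|_G,x_2)$ solves $\Lambda'$. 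Conjugating this solution by a homeomorphism $\R\to(0,1)$ and extending by the identity at the endpoints, we obtain an action $\rho\colon G\to\textup{Homeo}^+([0,1])$ fixing $0$ and $1$ together with a point $x_0\in(0,1)$ such that $(\rho,x_0)$ solves $\Lambda'$.

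\textbf{Blow-up and conclusion.} Let $\mathcal O=x_1\cdot\tau_1(H)$, a countable subset of $\R$, and blow up $\R$ along $\mathcal O$ by replacing each point of $\mathcal O$ with a copy of $[0,1]$; the resulting ordered space $\widehat\R$ is homeomorphic to $\R$. Extend $\tau_1$ to an action $\widehat\tau_1\colon H\to\textup{Homeo}^+(\widehat\R)$ as follows: pick $h_y\in H$ with $x_1\cdot\tau_1(h_y)=y$ for each $y\in\mathcal O$, with $h_{x_1}=\id$; identify the inserted interval $I_y$ over $y$ with $[0,1]$ via $h_y$; and let $h\in H$ send $I_y$ onto $I_{y\cdot\tau_1(h)}$ by $\rho\big(h_y\,h\,h_{y\cdot\tau_1(h)}^{-1}\big)$, where the inner element lies in $G$ because it fixes $x_1$; away from the inserted intervals, $\widehat\tau_1$ agrees with $\tau_1$. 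A routine cocycle check shows $\widehat\tau_1$ is a well-defined orientation-preserving action restricting on $I_{x_1}$ to $\rho$. Now set $x=x_0\in I_{x_1}$. For $(y\cdot\alpha=y)\in\Xi(\Lambda_1,\Lambda_2)^==\Lambda_1^=\cap\Lambda_2^=$ we have $\alpha\in G$, so $\widehat\tau_1(\alpha)$ preserves $I_{x_1}$ and acts there by $\rho(\alpha)$, which fixes $x_0$; thus $x\cdot\widehat\tau_1(\alpha)=x$. For $(y\cdot\beta\neq y)\in\Xi(\Lambda_1,\Lambda_2)^{\neq}=\Lambda_1^{\neq}\cup\Lambda_2^{\neq}$: if $\beta\notin G$ then $\widehat\tau_1(\beta)$ carries $I_{x_1}$ onto a different inserted interval, so $x\cdot\widehat\tau_1(\beta)\notin I_{x_1}$, and in particular $\neq x$; if $\beta\in G$ then the inequation cannot come from $\Lambda_1$ (a $\Lambda_1$-inequation is false at $x_1$ for any stabilizer element), hence it lies in $\Lambda'^{\neq}$, and $\widehat\tau_1(\beta)$ acts on $I_{x_1}$ by $\rho(\beta)$, which moves $x_0$; thus $x\cdot\widehat\tau_1(\beta)\neq x$. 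Therefore $(\widehat\tau_1,x)$ solves $\Xi(\Lambda_1,\Lambda_2)$.

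\textbf{Main obstacle.} The manipulation of the sets $\Lambda_i^=$ and $\Lambda_i^{\neq}$ is purely bookkeeping; the real content is the blow-up step, i.e.\ verifying that inserting a countable family of closed intervals into $\R$ yields a space homeomorphic to $\R$, and that the equivariant extension $\widehat\tau_1$ is an honest action. Conceptually, blowing up at $x_1$ localizes the whole problem to the stabilizer $G$, where the only constraints left to arrange are the equations common to both systems (which automatically lie in $G$) together with the $\Lambda_2$-inequations that happen to lie in $G$ — and those are met for free by restricting $\tau_2$.
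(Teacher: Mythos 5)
Your argument is correct, and it shares the paper's two structural moves (the inductive reduction to $m=2$, and blowing up the $\tau_1$-orbit of $x_1$), but the mechanism for putting an action on the inserted intervals is genuinely different. The paper defines two \emph{commuting} actions on the blown-up line: the ``affine blow-up'' $\sigma_1^b$, in which intervals $I_i$ and $I_j$ are identified by the unique orientation-preserving affine map, and the diagonal product $\tau=\circledast_n\tau_n$ of conjugated copies of the full action $\sigma_2\colon H\to\textup{Homeo}^+(\R)$ on the interiors; the desired solution is the diagonal $\alpha\mapsto\tau(\alpha)\sigma_1^b(\alpha)$, whose correctness is immediate from the commutation $\tau(\alpha)\sigma_1^b(\beta)=\sigma_1^b(\beta)\tau(\alpha)$. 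You instead localize to the stabilizer $G=\mathrm{Stab}_{\tau_1}(x_1)$, pass to the restricted action $\tau_2|_G$, and extend to all of $H$ by a coinduction-style cocycle formula using chosen coset representatives $\{h_y\}$. Your construction is more economical — it only ever uses $\tau_2|_G$ (and indeed only the data needed to solve $\Lambda'$), and inequations with $\beta\notin G$ come for free from the blow-up itself rather than from $\sigma_2$ — at the cost of a cocycle verification and a choice of transversal; the paper's construction avoids those choices, uses all of $\sigma_2$, and reads off correctness directly from the commuting pair. Two small points worth tightening: the claim that the blown-up ordered set is homeomorphic to $\R$ deserves a word (the paper sidesteps it by taking lengths $1/2^n$ so the blow-up embeds in $\R$; in your version one argues via completeness and separability of the lexicographic order), and continuity of $\widehat\tau_1$ across endpoints of the inserted intervals should be observed to follow from $\rho$ fixing $\{0,1\}$ pointwise. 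Neither affects the validity of the argument.
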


\begin{proof}
We will show this for $m=2$, and the general case will follow from the same argument using induction, since $$\Xi(\Lambda_1,\ldots, \Lambda_m)=\Xi(\Xi(\Lambda_1,\ldots, \Lambda_{m-1}),\Lambda_m).$$
Consider actions $\sigma_i:H\to \textup{Homeo}^+(\R)$ and $x_i\in \R$ 
that are solutions to the systems $\Lambda_i$, for each $1\leq i\leq 2$.
We fix an enumeration of the orbit $O=x_1\cdot \sigma_1(H)$ as $(p_n)_{n\in \N}$, where $x_1=p_0$. We perform the ``blowup'' construction as follows. We replace each point $p_n$ by a closed interval $I_n$ of length 
$\frac{1}{2^n}$ to obtain a new copy of the real line, upon which the action $\sigma_1$ naturally extends to an action $\sigma_1^b:H\to \textup{Homeo}^+(\R)$ as follows. Whenever $p_i\cdot \sigma_1(\alpha)=p_j$ for some $\alpha\in H$, $\sigma_1^b(\alpha)$ maps the interval $I_i$ to $I_j$ 
by the unique orientation-preserving affine homeomorphism $\eta_{i,j}:I_i\to I_j$.
So the restriction $\sigma_1^b(\alpha)|_{I_i}=\eta_{i,j}$. 

The action $\sigma_1^b$ satisfies the following key property.
Recall that we fixed $x_1=p_0$.
For each equation $y\cdot \alpha_1=y$ and each inequation $y\cdot \alpha_2\neq y$ in $\Lambda_1$, it holds that:
\begin{enumerate}
\item $I_0$ is fixed pointwise by $\sigma_1^b(\alpha_1)$.
\item $(I_0\cdot \sigma_1^b(\alpha_2))\cap I_0=\emptyset$.
\end{enumerate}

Since each open interval $int(I_n)$ is homeomorphic to $\R$, we choose homeomorphisms $\nu_n:\R\to int(I_n)$ such that $\nu_i\eta_{i,j}=\nu_j$ for all $i,j$. 
Using topological conjugacy given by the maps $\nu_n:\R\to int(I_n)$, for each $n\in \N$ we build an action: 
$$\tau_{n}:H\to \textup{Homeo}^+(int(I_n)),\qquad \tau_n=\nu_n^{-1}\sigma_2\nu_n.$$  
We consider the diagonal-product action: 
$$\tau:H\to \textup{Homeo}^+(\R)\qquad \tau=\circledast_{n\in \N}\tau_n.$$
Note that for each pair $\alpha,\beta\in H$, by construction it holds that
$$\tau(\alpha)\sigma_1^b(\beta)=\sigma_1^b(\beta)\tau(\alpha).$$

This naturally induces an action: 
$$\lambda:H\oplus H\to \textup{Homeo}^+(\R),\qquad \lambda(\alpha,\beta)=\tau(\alpha)\sigma_1^b(\beta)=\sigma_1^b(\beta)\tau(\alpha).$$
By our hypothesis on $\sigma_2$, we have a point $z\in int(I_0)$ such that for each equation $y\cdot \alpha_1=y$ and inequation $y\cdot \alpha_2\neq y$ in $\Lambda_2$, it holds that 
$$z\cdot \tau(\alpha_1)=z,\qquad z\cdot \tau(\alpha_2)\neq z.$$
Let $\rho:H\to H\oplus H$ be the ``diagonal embedding'' given by $\rho(\alpha)=(\alpha,\alpha)$ for each $\alpha\in H$.
Then $\rho\lambda$ is the desired action and $z$ is the required point.
\end{proof}

\begin{remark}\label{InfiniteSystem}
Another proof of Lemma \ref{Lem:System} can be given as follows.
Let $I_1,\ldots , I_m$ be pairwise disjoint nonempty open intervals in $\R$.
We consider actions $\sigma_i:H\to \textup{Homeo}^+(I_i)$ and points $x_i\in I_i$ that witness the solution to $\Lambda_i$.
Take the diagonal product $\sigma=\circledast_{1\leq i\leq m}\sigma_i$.
Now consider the restriction of the action of $H$ on the orbit $(x_1,\ldots , x_m)\cdot \sigma(H)$.
Since each orbit $x_i\cdot \sigma_i(H)$ is totally ordered, we can lexicographically order $(x_1,\ldots , x_m)\cdot \sigma(H)$.
Clearly, this is a countable right $H$-invariant totally ordered set. By taking its dynamical realization in $\textup{Homeo}^+(\R)$, we obtain the desired action 
for which the image of $(x_1,\ldots , x_m)$ is the required point. The advantage of this proof is that it also works for any countably infinite set of systems.
\end{remark}

\begin{proof}[Proof of Theorem \ref{thm:StackingsRF} assuming Proposition \ref{Lem:Stackings}]

We enumerate the set of prefix pairs of $w$ as $\{(w_1^{(i)},w_2^{(i)})\mid 1\leq i\leq k\}$.
For each $1\leq i\leq k$, we define a system $\Lambda_i$ consisting of one equation and one inequation: 
$$y\cdot w=y,\qquad y\cdot w_1^{(i)}w_2^{(i)}(w_1^{(i)})^{-1} \neq y.$$
Applying Proposition \ref{Lem:Stackings}, for each $1\leq i\leq k$ we obtain a group action $\sigma_i:G\to \textup{Homeo}^+(\R)$ such that 
there exists some $x_i\in \R$ for which
$$x_i\cdot \sigma_i(w)=x_i,\qquad x_i\cdot \sigma_i(w_1^{(i)}w_2^{(i)})\neq x_i\cdot \sigma_i(w_1^{(i)}).$$
Applying Lemma \ref{Lem:System} to $\Lambda_1,\ldots ,\Lambda_k$, we obtain an action $\sigma:G\to \textup{Homeo}^+(\R)$ which witnesses a solution to the system $\Xi(\Lambda_1,\ldots ,\Lambda_k)$
for $G$ as some $x\in \R$. Note that each $\Lambda_i$ contains the same equation, $y\cdot w=y$, so we have that 
$$\Xi(\Lambda_1,\ldots ,\Lambda_k)=(\bigcup_{1\leq i\leq k}\Lambda_i^{\neq})\cup \{y\cdot w=y\}=\bigcup_{1\leq i\leq k}\Lambda_i.$$
The point $x\in \R$ that witnesses this for the action $\sigma$ satisfies that $\Omega(w,x)$ is stable.
This proves Theorem \ref{thm:StackingsRF}.
\end{proof}

We shall now focus on proving Proposition \ref{Lem:Stackings}.
First we simplify it to a proposition that is more convenient to prove. To do so, we make the following basic observation. It generalizes easily to any system over any group, but we only need this for systems consisting of one equation and one inequation. We will also use this in the proof of Proposition~\ref{Lem:StackingsSimple}.
\begin{lemma}\label{lemma: conjugation for systems}
    Fix a group $H$ and consider a system $\Lambda$ with one equation $y\cdot\alpha=y$ and one inequation $y\cdot\beta\neq y$, for some given $\alpha,\beta\in H$. For any $h\in H$, consider the \emph{$h$-conjugate system} $\Lambda'$ of $\Lambda$ with one equation $y\cdot (h^{-1}\alpha h)=y$ and one inequation $y\cdot (h^{-1}\beta h)\neq y$. Then $\Lambda$ has a solution over $H$ if and only if $\Lambda'$ does.
\end{lemma}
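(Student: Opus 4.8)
The plan is to exhibit an explicit bijection between solutions of $\Lambda$ and solutions of $\Lambda'$ by conjugating the action. Suppose $(\tau, x)$ is a solution of $\Lambda$ over $H$, so that $\tau\col H\to\textup{Homeo}^+(\R)$ is an action with $x\cdot\tau(\alpha)=x$ and $x\cdot\tau(\beta)\neq x$. Since $\tau(h)$ is an orientation-preserving homeomorphism of $\R$, it is in particular a bijection, so the point $x'\defeq x\cdot\tau(h)^{-1}$ is well-defined. I claim that $(\tau, x')$ is a solution of $\Lambda'$. Indeed, using that $\tau$ is a (right) action, $x'\cdot\tau(h^{-1}\alpha h)=x\cdot\tau(h)^{-1}\tau(h)\tau(\alpha)\tau(h)=(x\cdot\tau(\alpha))\cdot\tau(h)=x\cdot\tau(h)=x'$, where the middle equality uses $x\cdot\tau(\alpha)=x$; here I am using the convention from Section~\ref{sec: rel stacking} that composition of homeomorphisms is written in diagrammatic order. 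Similarly, $x'\cdot\tau(h^{-1}\beta h)=(x\cdot\tau(\beta))\cdot\tau(h)$, and since $x\cdot\tau(\beta)\neq x$ and $\tau(h)$ is injective, this is not equal to $x\cdot\tau(h)=x'$. Hence $(\tau,x')$ solves $\Lambda'$.

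For the converse direction, I would simply observe that $\Lambda$ is the $h^{-1}$-conjugate system of $\Lambda'$ (since $(h^{-1})^{-1}(h^{-1}\alpha h)(h^{-1})=\alpha$ and likewise for $\beta$), so applying the construction just described with $h$ replaced by $h^{-1}$ shows that any solution of $\Lambda'$ yields a solution of $\Lambda$. This gives both implications and completes the proof. Alternatively, one checks directly that the map $(\tau,x)\mapsto(\tau, x\cdot\tau(h)^{-1})$ and the map $(\tau,x')\mapsto(\tau,x'\cdot\tau(h))$ are mutually inverse, so the correspondence is a bijection, though for the statement as phrased only the existence of one solution from the other is needed.

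I do not anticipate any real obstacle here; the only points requiring minor care are bookkeeping ones: keeping the right-action convention consistent (so that $x\cdot\tau(gh)=(x\cdot\tau(g))\cdot\tau(h)$ and the conjugate $h^{-1}\alpha h$ is the correct order), and using the injectivity/surjectivity of $\tau(h)$ as a homeomorphism to justify both that $x'$ exists and that the inequation is preserved. The same argument works verbatim for a system with any finite (or even infinite) collection of equations and inequations, and also for systems over $H\oplus H$ or any group, which is why it will be reusable in the proof of Proposition~\ref{Lem:StackingsSimple}; I would remark on this generality but only prove the one-equation-one-inequation case as stated.
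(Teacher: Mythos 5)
Your approach is the same one the paper uses — solve $\Lambda'$ with the same action $\tau$ but a translated basepoint, then appeal to symmetry — but your proposal has a sign error that makes the stated argument internally inconsistent. You define $x'\defeq x\cdot\tau(h)^{-1}$, yet the computation you then carry out concludes $\ldots=x\cdot\tau(h)=x'$, which only holds if $x'=x\cdot\tau(h)$. The latter is the correct choice. Under the paper's convention (right action, composition in diagrammatic order), $\tau$ is a genuine homomorphism and $\tau(h^{-1}\alpha h)=\tau(h)^{-1}\tau(\alpha)\tau(h)$, so the conjugation trick requires pushing $x'$ back by $\tau(h)^{-1}$ into the fixed point $x$, i.e.\ $\tau(h)^{-1}(x')=x$, which gives $x'=x\cdot\tau(h)$, exactly what the paper writes. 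Your intermediate expression $x\cdot\tau(h)^{-1}\tau(h)\tau(\alpha)\tau(h)$ is also off: from the definition $x'=x\cdot\tau(h)^{-1}$ one would instead get $x\cdot\tau(h)^{-1}\tau(h)^{-1}\tau(\alpha)\tau(h)$, which does not cancel. (With the correct $x'=x\cdot\tau(h)$ one gets $x\cdot\tau(h)\tau(h)^{-1}\tau(\alpha)\tau(h)=x\cdot\tau(\alpha)\tau(h)=x\cdot\tau(h)=x'$, the intended cancellation.) So: replace $x\cdot\tau(h)^{-1}$ by $x\cdot\tau(h)$ throughout, and the proof becomes the paper's one-line argument. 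The rest — using injectivity of $\tau(h)$ for the inequation, invoking symmetry for the converse, and the remark about generality — is fine.
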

\begin{proof}
    Suppose $\Lambda$ has a solution $(\tau,x)$, that is, we have $x\cdot\tau(\alpha)=x$ and $x\cdot\tau(\beta)\neq x$.
    Then $(\tau, x\cdot \tau(h))$ is a solution for $\Lambda'$. The other direction works by symmetry.
\end{proof}

Now we show that Proposition \ref{Lem:Stackings} reduces to the following simpler statement.

\begin{proposition}\label{Lem:StackingsSimple}
Let $A,B$ be countable right-orderable groups and $G=A\star B$. Consider a cyclically reduced word
$$w=a_1b_1\ldots a_nb_n\in A\star B, \qquad a_i\in A\setminus \{id\}, b_i\in B\setminus \{id\},$$ 
and $w_1$ a proper prefix of $w$.
Assume that $w$ is not a proper power.
Then there exists an action $\tau:A\star B\to \textup{Homeo}^+(\R)$ and some $x\in \R$ such that $x\cdot \tau(w)=x$ and $x\cdot \tau(w_1)\neq x$.
\end{proposition}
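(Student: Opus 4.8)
The statement to prove, Proposition~\ref{Lem:StackingsSimple}, asks for a single action $\tau\colon A\star B\to\textup{Homeo}^+(\R)$ and a point $x$ with $x\cdot\tau(w)=x$ but $x\cdot\tau(w_1)\ne x$ for a given proper prefix $w_1$. Following the roadmap sketched after Theorem~\ref{thm:StackingsRF}, the cleanest route is to build a closed interval $I\subset\R$ and an action $\tau$ satisfying the two ``interval'' conditions: $I\cdot\tau(w)\subseteq I$ and $I\cdot\tau(w_1)\cap I=\emptyset$. Given such $I$, the intermediate value theorem applied to the orientation-preserving homeomorphism $\tau(w)$ on the compact interval $I$ with $I\cdot\tau(w)\subseteq I$ produces a fixed point $x\in I$ of $\tau(w)$; and since $x\in I$ while $x\cdot\tau(w_1)\in I\cdot\tau(w_1)$, which is disjoint from $I$, automatically $x\cdot\tau(w_1)\ne x$. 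So the entire content is the construction of $(\tau,I)$. I would also first invoke Lemma~\ref{lemma: conjugation for systems} (with $H=A\star B$, $\alpha=w$, $\beta=w_1$, and a suitable $h$) to normalize which prefix $w_1$ is, e.g.\ cyclically rotating $w$ so that the ``cut point'' of the prefix is convenient — this is harmless and may streamline the bookkeeping below.

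**Constructing the action.** The interval $I$ should be carried around by the successive prefixes of $w$ in a controlled pattern. Write the prefixes of $w$ as $u_1=a_1$, $u_2=a_1b_1$, $\dots$, up through $u_{2n}=w$, and think of tracking the images $I, I\cdot\tau(u_1), I\cdot\tau(u_2),\dots$. The idea is to prescribe combinatorially, for the generators $a_i$ occurring in $w$ (acting via the $A$-factor) and $b_i$ (via the $B$-factor), where they should send the relevant intervals, so that: (i) the orbit of $I$ under the prefixes visits a prescribed finite configuration of disjoint-or-nested intervals in $\R$, (ii) after applying all of $w$ we land back inside $I$, and (iii) the intermediate image $I\cdot\tau(w_1)$ lands in a region disjoint from $I$. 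The key point that makes this possible — and the place where the hypotheses get used — is that the letters $a_1,\dots,a_n$ (resp.\ $b_1,\dots,b_n$) generate \emph{right-orderable} groups $A'\le A$, $B'\le B$, so any partial, ``order-consistent'' prescription of how these finitely many elements permute finitely many intervals can be realized by an actual action on $\R$: one extends a right-order on $A'$ (resp.\ $B'$) to a dynamical realization, blows up points into the desired intervals, and takes a diagonal product with an arbitrary action outside. Concretely I expect one builds $\sigma\colon A\to\textup{Homeo}^+(\R)$ and $\tau_B\colon B\to\textup{Homeo}^+(\R)$ separately with the required behavior on the finitely many intervals, then sets $\tau=\langle\sigma,\tau_B\rangle$ as in the definition of actions generated by two actions. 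The non-proper-power hypothesis on $w$ is what guarantees the combinatorial configuration one needs is actually consistent — if $w=v^k$ the forced identifications among the intervals would collapse $I\cdot\tau(w_1)$ back onto $I$ for the appropriate $w_1$, making the construction impossible, matching the remark that relative stackings do not exist for proper powers.

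**Where the difficulty lies.** The easy parts are the IVT argument and the reduction via Lemma~\ref{lemma: conjugation for systems}; the genuinely hard part is designing the combinatorial arrangement of intervals and verifying it is simultaneously (a) realizable by right-order-compatible actions of $A'$ and $B'$ — i.e.\ the constraints imposed on each individual generator $a_i$ or $b_i$, and on products of them that are forced to be the identity (like the full relation $w=1$ when restricted to the interval $I$, which we do \emph{not} want — we want $I\cdot\tau(w)\subseteq I$, a weaker, one-sided condition, precisely so the action need not factor through any quotient), are consistent with \emph{some} bi-ordering-type data on the free factors — and (b) forces $I\cdot\tau(w_1)$ to be disjoint from $I$. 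This is exactly the ``dynamical arrangements'' machinery advertised in Section~\ref{sec: structure of proof}, and I anticipate the proof proceeds by setting up that formalism (a careful encoding of finitely many nested/disjoint intervals together with a consistent assignment of the generators' actions on them), checking a consistency lemma that uses right-orderability of $A'$ and $B'$ together with $w$ not being a proper power, and then reading off $\tau$ and $I$ from a realized arrangement. The subtlety to watch is the interaction at the junctures where the word switches between an $A$-letter and a $B$-letter: the interval must be passed between the two factor-actions coherently, and it is the freedom in the free product (no relations across factors) that makes this gluing automatic once each side is arranged correctly.
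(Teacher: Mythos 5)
Your proposal correctly identifies the overall strategy — build a closed interval $I$ with $I\cdot\tau(w)\subseteq I$ and $I\cdot\tau(w_1)\cap I=\emptyset$, then apply the intermediate value theorem — and correctly locates the roles of right-orderability, the free-product structure (actions on $A$ and $B$ built separately, then combined via $\langle\sigma,\tau_B\rangle$), and the conjugation reduction of Lemma~\ref{lemma: conjugation for systems}. But the proposal stops at the point where the actual work begins. Phrases like ``I expect one builds\dots'', ``I anticipate the proof proceeds by\dots'', and ``the genuinely hard part is designing the combinatorial arrangement\dots'' make clear that the construction of $(\tau,I)$ — which is the entire content of the proposition — is not carried out. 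In the paper, this is done by writing down an explicit \emph{catenation} of intervals $(I_1,J_1,\ldots,I_n,J_n,L_1,M_1,\ldots,L_k,J_n',I_n',\ldots,J_1',I_1')$ and using Lemmas~\ref{Lem:catenations1} and \ref{Lem:catenations2} to march points rightward through the $I_i,J_i$'s via $w$ and leftward through the $I_i',J_i'$'s via $w$, while pushing $I$ across the middle block $L_1,\ldots,L_k$ via $w_1$; the two parities of the prefix ($w_1$ ending in an $A$-letter or in a $B$-letter) genuinely require two different constructions. None of this appears, even in outline, in your proposal.

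Moreover, your account of how the non-proper-power hypothesis is used is not right. You say that for $w=v^k$ ``the forced identifications among the intervals would collapse $I\cdot\tau(w_1)$ back onto $I$,'' but in the paper's construction the intervals are chosen freely and nothing is identified; that sentence describes the easy converse (no relative stacking for proper powers), not the forward direction being proved. The hypothesis enters only in Case 2, where $w_1=a_1b_1\cdots a_kb_k$: if the final letter $b_k$ of the prefix equals $b_n$, the crucial interval $J_n'$ would be asked to host a $B$-action that simultaneously pushes a point forward under $b_n$ and under $b_k$ in conflicting ways. One resolves this by conjugating the system by $b_k^{-1}$ (again Lemma~\ref{lemma: conjugation for systems}), cyclically rotating $w$ and shrinking the discrepancy, and iterating; the non-proper-power hypothesis is exactly what forces this process to terminate, since otherwise $w$ and $w_1$ would both be powers of a common subword of length $\gcd(|w|,|w_1|)$. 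Once $b_k\neq b_n$, Lemma~\ref{lem: LOfact2} supplies a $B$-action on $J_n'$ with $u_2\cdot\tau_n'(b_n)<u_1\cdot\tau_n'(b_k)$, which is what separates $I\cdot\tau(w_1)$ from $I$. Finally, a smaller point: you invoke ``blowing up points into intervals,'' but that device is used in the proof of Lemma~\ref{Lem:System} (combining solutions of several systems), not here; the present proposition is proved directly via catenations, with no blowup needed.
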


\begin{proof}[Proof of Proposition \ref{Lem:Stackings} assuming Proposition \ref{Lem:StackingsSimple}]
The goal is to find a solution $(\sigma,x)$ for the following system $\Lambda$ over $G=A\star B$, where we have one equation $y\cdot w=y$ and one inequation $y\cdot (w_1w_2w_1^{-1})\neq y$.

By Lemma~\ref{lemma: conjugation for systems}, this is equivalent to solving the $w_1$-conjugate system $\Lambda'$ with one equation $y\cdot (w_1^{-1}ww_1)=y$ and one inequation $y\cdot w_2\neq y$.
Note that $w_1^{-1}ww_1$ is a cyclic conjugate of $w$ since $w_1$ is a prefix of $w$. And $w_2$ is a proper prefix of $w_1^{-1}ww_1$, so $\Lambda'$ has a solution by Proposition~\ref{Lem:StackingsSimple}.
\end{proof}

The goal of the rest of the section is to supply the proof of Proposition \ref{Lem:StackingsSimple}.
We will make use of the following elementary lemmas.

\begin{lemma}\label{lem: LOfact1}
Given a nontrivial element $f$ in a countable right-orderable group $H$, a nonempty open interval $I$, and $x,y\in I$ with $x<y$, we can find faithful actions $\nu_1,\nu_2:H\to \textup{Homeo}^+(I)$ such that $y<x\cdot \nu_1(f)$ and $y\cdot \nu_2(f)<x$.
\end{lemma}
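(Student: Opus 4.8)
The plan is to produce both actions from the \emph{dynamical realization} recalled at the start of this section. Since $H$ is countable and right-orderable it carries a right order, and as $f\neq id$ this order makes $f$ comparable to $id$; since the reverse of a right order is again a right order, we obtain a right order $<_+$ with $f>_+ id$ and a right order $<_-$ with $f<_- id$. I will build $\nu_1$ from $<_+$ and $\nu_2$ from $<_-$. The only feature of the dynamical realization I use is that it provides a \emph{faithful} action $\rho\colon H\to\textup{Homeo}^+(\R)$ together with a basepoint $o\in\R$ whose $\rho$-orbit is an order-embedded copy of $(H,<)$ with $o\cdot\rho(id)=o$; in particular $o\cdot\rho(f)>o$ when $f$ is positive for $<$ and $o\cdot\rho(f)<o$ when $f$ is negative.

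To build $\nu_1$, let $\rho_+$ be a dynamical realization of $<_+$ with basepoint $o_+$, and put $z:=o_+\cdot\rho_+(f)$, so $z>o_+$. Because $z>o_+$ and the prescribed points satisfy $x<y$ with $y<\sup I$, there is an orientation-preserving homeomorphism $\psi\colon\R\to I$ with $\psi(o_+)=x$ and $\psi(z)>y$: fix $\psi(o_+)=x$ first, then route $\psi(z)$ anywhere in the nonempty interval $(y,\sup I)$, the required monotonicity being automatic from $o_+<z$. Let $\nu_1$ be the conjugated action $r\cdot\nu_1(g):=\psi\bigl(\psi^{-1}(r)\cdot\rho_+(g)\bigr)$ for $r\in I$. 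Conjugating by the increasing homeomorphism $\psi$ preserves faithfulness and lands in $\textup{Homeo}^+(I)$, and $x\cdot\nu_1(f)=\psi\bigl(o_+\cdot\rho_+(f)\bigr)=\psi(z)>y$.

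For $\nu_2$, run the same argument with $<_-$: now $z:=o_-\cdot\rho_-(f)<o_-$, and we pick an orientation-preserving homeomorphism $\psi\colon\R\to I$ with $\psi(o_-)=y$ and $\psi(z)<x$, routing $\psi(z)$ into the nonempty interval $(\inf I,x)$. The resulting conjugated action $\nu_2$ is faithful and orientation-preserving on $I$, and $y\cdot\nu_2(f)=\psi\bigl(o_-\cdot\rho_-(f)\bigr)=\psi(z)<x$.

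I do not expect a genuine obstacle here; the only points needing care are tracking right (as opposed to left) actions through the conjugation formula, and invoking the standard properties of the dynamical realization — faithfulness, and that the chosen basepoint's orbit realizes the order, so that the sign of $o\cdot\rho(f)-o$ is at our disposal — after which the rescaling $\psi$ simply transports everything into the prescribed window inside $I$.
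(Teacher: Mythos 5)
Your proof is correct and follows essentially the same approach as the paper: conjugate a faithful action of $H$ on $\R$ by a suitable orientation-preserving homeomorphism onto $I$ so as to place a moved point where desired. The paper's phrasing (``for any $s\neq t$, by conjugating a fixed faithful action'') is terser and leaves implicit how to arrange that $f$ moves a point both up and down; your device of using the dynamical realizations of a right order and of its reverse handles that cleanly but amounts to the same conjugation argument.
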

\begin{proof}
Since $f\neq id$, for any $s\neq t\in I$, by conjugating a fixed faithful action, there is another faithful action $\nu:H\to \textup{Homeo}^+(I)$ such that $s\cdot \nu(f)=t$. The existence of $\nu_1$ and $\nu_2$ easily follows from this by choosing $s$ and $t$ appropriately.
\end{proof}

\begin{lemma}\label{lem: LOfact2}
Consider a countable right-orderable group $H$, a nonempty open interval $I$, and $x,y\in I$ with $x<y$. Let $f,g\in H\setminus \{id\}$ be such that $f\neq g$. Then we can find a faithful action $\sigma:H\to \textup{Homeo}^+(I)$ such that $y\cdot \sigma(g)<x\cdot \sigma(f)$.
\end{lemma}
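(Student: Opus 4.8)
The plan is to reduce this two-point, two-element statement to the single-element statement already established in Lemma~\ref{lem: LOfact1}. The crucial point to exploit is that every $\sigma(h)$ is an \emph{orientation-preserving} self-homeomorphism of $I$, hence strictly order-preserving; so I can manipulate the target inequality $y\cdot\sigma(g)<x\cdot\sigma(f)$ freely by applying such maps to both sides.

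First I would rewrite the desired inequality. Applying the order-preserving map $\sigma(f^{-1})=\sigma(f)^{-1}$ to both sides — and using that we are dealing with a right action, with composition written in the order of application as in the opening of Section~\ref{sec: rel stacking} — the inequality $y\cdot\sigma(g)<x\cdot\sigma(f)$ holds if and only if
\[
y\cdot\sigma(gf^{-1})=y\cdot\sigma(g)\cdot\sigma(f^{-1})<x\cdot\sigma(f)\cdot\sigma(f^{-1})=x.
\]
So it suffices to build a faithful action $\sigma\colon H\to\textup{Homeo}^+(I)$ with $y\cdot\sigma(h)<x$, where $h\defeq gf^{-1}$. Now $h\neq id$, since $f\neq g$ (this is the only use of the hypothesis; individual nontriviality of $f$ and $g$ is not actually needed). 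Hence Lemma~\ref{lem: LOfact1}, applied to the nontrivial element $h$, the same interval $I$, and the same points $x<y$, produces a faithful action $\nu_2\colon H\to\textup{Homeo}^+(I)$ with $y\cdot\nu_2(h)<x$; I would simply take $\sigma\defeq\nu_2$. Reversing the first step — applying the orientation-preserving homeomorphism $\sigma(f)$ to both sides of $y\cdot\sigma(gf^{-1})<x$ — then yields $y\cdot\sigma(g)<x\cdot\sigma(f)$, completing the argument.

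There is no real obstacle here beyond the bookkeeping of conventions (right actions, and $\lambda_1\lambda_2$ meaning ``first $\lambda_1$, then $\lambda_2$''), which must be handled consistently so that $y\cdot\sigma(g)\cdot\sigma(f^{-1})=y\cdot\sigma(gf^{-1})$ and $x\cdot\sigma(f)\cdot\sigma(f^{-1})=x$; with this in hand the proof is a two-line reduction. If one preferred not to invoke Lemma~\ref{lem: LOfact1}, the same conclusion is reachable directly — fix a right-invariant order on $H$, conjugate so that a dynamical realization of this order inside $I$ carries $y$ to the left of $x$ under $\sigma(h)$ — but borrowing Lemma~\ref{lem: LOfact1} makes this immediate.
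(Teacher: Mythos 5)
Your proof is correct, but it takes a genuinely different route from the paper's. The paper proves Lemma~\ref{lem: LOfact2} directly: it takes a dynamical realization of $H$ with a free orbit, conjugates by an orientation-reversing map if necessary to arrange $z\cdot\sigma_1(g)<z\cdot\sigma_1(f)$ at some point $z$, finds nearby $x_1<y_1$ by continuity, and then conjugates to carry $(x_1,y_1)$ to $(x,y)$ in $I$. You instead observe that, because $\sigma(f)^{-1}$ is an orientation-preserving (hence strictly increasing) homeomorphism of $I$ and the action is a right action with the paper's composition convention, the target inequality $y\cdot\sigma(g)<x\cdot\sigma(f)$ is \emph{equivalent} to $y\cdot\sigma(gf^{-1})<x$; since $gf^{-1}\neq id$, this is precisely what the $\nu_2$ of Lemma~\ref{lem: LOfact1} provides for the element $gf^{-1}$ and the same pair $x<y$. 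This reduction is shorter and more modular — it avoids rerunning the dynamical-realization argument — and as you note it also shows the hypotheses $f\neq id$, $g\neq id$ are superfluous; only $f\neq g$ is used. Both arguments are elementary, and the conclusion and its role in the rest of Section~\ref{sec: rel stacking} are unaffected; yours is simply a cleaner packaging that factors through the already-established Lemma~\ref{lem: LOfact1}.
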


\begin{proof}
We consider a dynamical realization $\sigma_1:H\to \textup{Homeo}^+(\R)$ of $H$ with a free orbit.
By conjugating with an orientation-reversing homeomorphism if needed, we can assume that there is a point $z\in \R$ such that $z\cdot \sigma_1(g)<z\cdot \sigma_1(f)$. Using continuity, we find points $x_1<y_1$ in a small neighborhood of $z$ such that $y_1\cdot \sigma_1(g)<x_1\cdot \sigma_1(f)$.
Now we consider a homeomorphism $\nu:\R\to I$ such that $\nu(x_1)=x, \nu(y_1)=y$. Upon conjugating $\sigma_1$ by $\nu$, we obtain the required $\sigma$.
\end{proof}

Before proving Proposition \ref{Lem:StackingsSimple}, first we discuss an elementary case as a warm-up.
\begin{example}\label{example: simplest case}
    Let $A,B$ be as above and let $w=ab, w_1=a$ for $a\in A\setminus \{id\}, b\in B\setminus \{id\}$. 
We would like to build an action $\eta:A\star B\to \textup{Homeo}^+(\R)$ which witnesses the conclusion of the Proposition~\ref{Lem:StackingsSimple} for these words, that is, $x\cdot\eta(ab)=x$ and $x\cdot\eta(a)\neq x$ for some $x\in\R$.

Let
$$I_1=[0,2], J_1=[1,3], K=[2,4], J_1'=[3,5], I_1'=[4,6].$$
Using Lemma \ref{lem: LOfact1}, it is straightforward to find actions
$$\sigma_1:A\to \textup{Homeo}^+(I_1),\qquad \tau_1:B\to \textup{Homeo}^+(J_1),$$ 
$$\sigma_1':A\to \textup{Homeo}^+(I_1'),\qquad  \tau_1':B\to \textup{Homeo}^+(J_1'),$$
such that 
$$1\cdot \sigma_1(a)>1.5,\qquad 1.5\cdot \tau_1(b)> 2.5,$$
$$5\cdot \sigma_1'(a)< 4.5,\qquad 4.5\cdot \tau_1'(b)<3.5.$$ 
Now let $\nu:A\to \textup{Homeo}^+(K)$ be an action such that $2.5\cdot \nu(a)> 3.5$.
Let
$$\sigma:A\to \textup{Homeo}^+[0,6],\qquad \sigma=\sigma_1\circledast \nu \circledast \sigma_1' ,$$ 
and 
$$\tau:B\to \textup{Homeo}^+[0,6],\qquad \tau=\tau_1\circledast \tau_1'.$$

We define $\eta=\langle \sigma,\tau\rangle$. We call such an action a \emph{dynamical arrangement}, which is formally defined below.
For 
$$K_1=\left[2.5,3.5\right ], \qquad K_2=[1,5],$$ 
our dynamical arrangement is designed to satisfy that
$$K_1\cdot \eta(ab)\subset K_2\cdot \eta(ab)\subset K_1,\qquad (K_1\cdot \eta(a))\cap K_1=\emptyset.$$ 
In particular, by the intermediate value theorem, $ab$ fixes a point in $K_1$ and $a$ moves every point in $K_1$.
\end{example}

Now we introduce some terminology for the general case. For $n\in \N$, a \emph{catenation of intervals} (see Figure~\ref{fig: catenation}) is an ordered tuple of the form $(I_1,J_1,\ldots ,I_n,J_n)$ or $(I_1,J_1,\ldots ,I_n,J_n,I_{n+1})$ of nonempty open intervals in $\R$ such that for all $k$ (in the given range):
\begin{enumerate}
\item Each interval $I_k,J_k$ has length $2$, and the endpoints lie in $\Z$;
\item $\sup(I_k)=\inf(I_{k+1})$ and $\sup(J_k)=\inf(J_{k+1})$; and
\item $\sup(I_k)$ is the midpoint of $J_k$ and $\inf(J_k)$ is the midpoint of $I_k$.
\end{enumerate}
Note that a catenation is an ordered tuple of intervals, and in some cases we shall use a different indexing system such as for instance, $(J_m,I_m,\ldots,J_1,I_1)$. However, it will still hold that the intervals appear ``from left to right'' starting from $J_m$ and onward until $I_1$.

Given a catenation of the form $(I_1,J_1,\ldots ,I_n,J_n)$ or $(I_1,J_1,\ldots ,I_n,J_n,I_{n+1})$, a \emph{dynamical arrangement} is an action of $A\star B$ which is generated by the diagonal-product of chosen actions of $A$ on the intervals $I_k$'s and the diagonal-product of chosen actions of $B$ on the intervals $J_k$'s.
We shall construct dynamical arrangements that allow us to prove Proposition \ref{Lem:StackingsSimple}.

We record two elementary lemmas concerning actions emerging from catenations.
These are essentially natural generalizations of a key idea in the example above.

\begin{lemma}\label{Lem:catenations1}
Let $A,B$ be countable right-orderable groups. Consider a reduced word 
$$w=a_1b_1\ldots a_kb_k\in A\star B,\qquad a_i\in A\setminus \{id\}, b_i\in B\setminus \{id\}.$$
Let $(I_1,J_1,\ldots ,I_k,J_k)$ be a catenation of intervals. Let $x=\inf(J_1)$, $y=\sup(J_k)$ and let $\epsilon>0$.
We can construct actions
$$\sigma_i:A\to \textup{Homeo}^+(I_i),\qquad \tau_i:B\to \textup{Homeo}^+(J_i),\qquad 1\leq i\leq k,$$
such that the following holds. Let $\nu=\langle \sigma,\tau\rangle$ be the action where $$\sigma=\circledast_{1\leq i\leq k}\sigma_i,\qquad \tau=\circledast_{1\leq i\leq k}\tau_i.$$
Then it holds that $x\cdot \nu(a_1b_1\ldots a_kb_k)\in (y-\epsilon,y)$.
Moreover, an analogous statement holds for words of the form $w=a_1b_1\ldots a_k$.
\end{lemma}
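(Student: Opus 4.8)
The plan is to follow the forward orbit of $x$ under the successive letters of $w$ and to choose the factor actions one at a time, so that this orbit ``walks rightward through the catenation,'' entering each interval in turn. After translating we may assume $I_m=(2m-2,2m)$ and $J_m=(2m-1,2m+1)$ for $1\le m\le k$; then $x=\inf(J_1)$ is the midpoint of $I_1$, $y=\sup(J_k)=2k+1$, only consecutive catenation intervals overlap, and $I_m\cap J_m=(2m-1,2m)$, $J_m\cap I_{m+1}=(2m,2m+1)$. The key point is that any point of one of these overlap regions lies in exactly one of the intervals $I_1,\dots,I_k$ and in exactly one of $J_1,\dots,J_k$, while $x_0:=x$ lies in $I_1$ and in no other catenation interval; hence, at each stage, the diagonal products $\sigma=\circledast_i\sigma_i$ and $\tau=\circledast_i\tau_i$ act on the current orbit point through a single factor.

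Reading $w=a_1b_1\cdots a_kb_k$ from the left, let $x_j$ be the image of $x_{j-1}$ under $\nu=\langle\sigma,\tau\rangle$ applied to the $j$-th letter. I will build $\sigma_1,\tau_1,\sigma_2,\tau_2,\dots$ in this order while maintaining the invariant
$$x_{2m-1}\in\bigl(2m-\tfrac12,\,2m\bigr)\subset I_m\cap J_m,\qquad x_{2m}\in\bigl(2m+\tfrac12,\,2m+1\bigr)\subset J_m\cap I_{m+1}\ \ (2m<2k).$$
Assume the invariant holds through step $j-1$. If the $j$-th letter is $a_m$ (so $j=2m-1$), then $x_{j-1}\in I_m$ and $x_{j-1}$ lies in no other $I_\ell$ --- for $m\ge 2$ this is the previous even step, and for $m=1$ it is the base case above --- while $x_{j-1}<2m-\tfrac12$; since $a_m\neq id$ and $A$ is countable right-orderable, Lemma~\ref{lem: LOfact1} applied to $A$, the interval $I_m$ and the pair of points $x_{j-1}<2m-\tfrac12$ supplies an action $\sigma_m\colon A\to\textup{Homeo}^+(I_m)$ with $x_{j-1}\cdot\sigma_m(a_m)>2m-\tfrac12$, and as this image automatically stays in $I_m$ we get $x_j=x_{j-1}\cdot\sigma_m(a_m)\in(2m-\tfrac12,2m)$, restoring the invariant. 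A letter $b_m$ with $2m<2k$ is treated identically with $(A,I_m,\sigma_m)$ replaced by $(B,J_m,\tau_m)$ and target interval $(2m+\tfrac12,2m+1)$; here one uses that $x_{2m-1}$ lies in $J_m$ and in no other $J_\ell$. For the final letter $b_k$ we instead invoke Lemma~\ref{lem: LOfact1} to pick $\tau_k$ with $x_{2k-1}\cdot\tau_k(b_k)>\max(x_{2k-1},\,y-\epsilon)$ --- possible since $x_{2k-1}<2k<y=\sup J_k$ --- so that $x_{2k}\in(y-\epsilon,y)$. Each interval of the catenation is used exactly once along $w$, so all of $\sigma_1,\dots,\sigma_k,\tau_1,\dots,\tau_k$ are defined, and $x\cdot\nu(w)=x_{2k}\in(y-\epsilon,y)$, as required.

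For a word of the form $w=a_1b_1\cdots a_k$ one repeats the argument verbatim on the catenation $(I_1,J_1,\dots,I_{k-1},J_{k-1},I_k)$ with $y=\sup(I_k)$: the orbit walks through $I_1,J_1,\dots,J_{k-1},I_k$ as before, and the last step applies $\sigma_k(a_k)$ to push $x_{2k-1}$ into $(y-\epsilon,y)\subset I_k$.

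I do not expect a genuine obstacle --- everything is elementary once Lemma~\ref{lem: LOfact1} is in hand --- but the step needing the most care is the bookkeeping: at each stage one must check that the current orbit point lies in the interior of a \emph{single} catenation interval of the right type (so the diagonal product acts through one factor) and strictly left of that interval's right endpoint by a fixed margin (so Lemma~\ref{lem: LOfact1} can carry it into the prescribed subinterval without overshooting into the next catenation interval). Keeping each intermediate displacement below $\tfrac12$ and scaling the last one against $\epsilon$ makes these checks automatic.
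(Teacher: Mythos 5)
Your proof is correct and takes essentially the same route as the paper: the paper also builds the factor actions one interval at a time, choosing $\sigma_i$ so that $p_i\cdot\sigma_i(a_i)$ lands in $\mathrm{int}(J_i)$, then $\tau_i$ so that the image lands in $\mathrm{int}(I_{i+1})$, and finally tuning $\tau_k$ to steer the endpoint into $(y-\epsilon,y)$. Your extra bookkeeping (explicit coordinates, the half-unit margin, and the observation that at each stage the orbit point lies in exactly one $I_\ell$ and one $J_\ell$ so that the diagonal products act through a single factor) merely makes explicit what the paper leaves implicit.
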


\begin{proof}
Fix $x=p_1$ for notational convenience.
First, we construct an action
$$\sigma_1:A\to \textup{Homeo}^+(I_1)\qquad \text{such that }q_1=p_1\cdot \sigma_1(a_1)\in int(J_1).$$
Next, we construct an action
$$\tau_1:B\to \textup{Homeo}^+(J_1)\qquad \text{such that } p_2=q_1\cdot\tau_1(b_1)\in int(I_2).$$
Proceeding inductively, we construct actions
$$\sigma_i:A\to \textup{Homeo}^+(I_i),\qquad \tau_i:B\to \textup{Homeo}^+(J_i),$$ 
such that:
$$q_i=p_i\cdot \sigma_i(a_i)\in int(J_i),\qquad \text{for } 1\leq i\leq k.$$ 
and 
$$p_{i+1}=q_i\cdot \tau_i(b_i)\in int(I_{i+1}),\qquad \text{for } 1\leq i<k.$$
Finally, we choose an action
$$\tau_k:B\to \textup{Homeo}^+(J_k)\qquad \text{such that }p_{k+1}\defeq q_k\cdot \tau_k(b_k)\in (y-\epsilon,y).$$
See Figure~\ref{fig: catenation}. Then the required actions are: $$\sigma=\circledast_{1\leq i\leq k}\sigma_i,\qquad \tau=\circledast_{1\leq i\leq k}\tau_i,\qquad \nu=\langle \sigma,\tau\rangle.$$
\end{proof}

\begin{figure}
	\labellist
	\small \hair 2pt
	\pinlabel $I_1$ at 30 35
    \pinlabel $J_1$ at 60 -5
    \pinlabel $I_2$ at 90 35
    \pinlabel $J_2$ at 120 -5

    \pinlabel $x=p_1$ at 17 23
    \pinlabel $q_1$ at 45 7
    \pinlabel $p_2$ at 75 23
    \pinlabel $q_2$ at 105 7
    \pinlabel $p_3$ at 135 23
    \pinlabel $\cdots$ at 195 25
    \pinlabel $\cdots$ at 225 5

    \pinlabel $I_k$ at 270 35
    \pinlabel $J_k$ at 300 -5

    \pinlabel $p_k$ at 255 23
    \pinlabel $q_k$ at 285 7
    \pinlabel $p_{k+1}$ at 330 23
    \pinlabel $y-\epsilon$ at 320 33
    \pinlabel $y$ at 335 7
    \endlabellist
	\centering
	\includegraphics[scale=1]{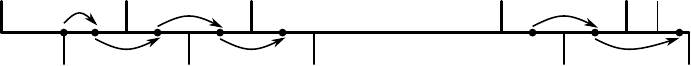}
	\caption{The action on a catenation in Lemma~\ref{Lem:catenations1}.}\label{fig: catenation}
\end{figure}

The following lemma is essentially a ``mirror image'' version of the previous lemma, whose proof is the same via conjugating by an orientation-reversing isometry of $\R$. 
\begin{lemma}\label{Lem:catenations2}
Let $A,B$ be countable right-orderable groups. Consider a reduced word 
$$w=a_1b_1\ldots a_kb_k\in A\star B,\qquad a_i\in A\setminus \{id\}, b_i\in B\setminus \{id\}.$$
Let $(J_k,I_k,\ldots ,J_1,I_1)$ be a catenation of intervals. Let $y=\inf(J_k)$, $x=\sup(J_1)$ and let $\epsilon>0$.
We can construct actions
$$\sigma_i:A\to \textup{Homeo}^+(I_i),\qquad \tau_i:B\to \textup{Homeo}^+(J_i),\qquad 1\leq i\leq k,$$
such that the following holds. 
Let $\nu=\langle \sigma,\tau\rangle$ be the action where
$$\sigma=\circledast_{1\leq i\leq k}\sigma_i,\qquad \tau=\circledast_{1\leq i\leq k}\tau_i.$$
Then it holds that $x\cdot \nu(a_1b_1\ldots a_kb_k)\in (y,y+\epsilon)$.
Moreover, an analogous statement holds for words of the form $w=a_1b_1\ldots a_k$.\qed
\end{lemma}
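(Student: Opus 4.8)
The plan is to deduce Lemma~\ref{Lem:catenations2} from Lemma~\ref{Lem:catenations1} by conjugating everything by an orientation-reversing isometry of $\R$, as the sentence preceding the statement indicates. The key observation is that reflecting the real line turns the ``left-to-right'' catenation $(J_k,I_k,\ldots,J_1,I_1)$ into a ``left-to-right'' catenation of the form $(I_1',J_1',\ldots,I_k',J_k')$ required by Lemma~\ref{Lem:catenations1}, interchanges the distinguished endpoints $x=\sup(J_1)$ and $y=\inf(J_k)$ with the $\inf$ and $\sup$ of the corresponding extreme intervals, and converts the half-open approximation interval $(y-\epsilon,y)$ into $(y,y+\epsilon)$.

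In detail, I would fix $c\in\Z$ and set $\rho\col\R\to\R$, $\rho(t)=c-t$, an orientation-reversing involutive isometry. Write $I_j'\defeq\rho(I_j)$ and $J_j'\defeq\rho(J_j)$. Since $\rho$ is an isometry carrying integer points to integer points, each $I_j',J_j'$ again has length $2$ with integer endpoints, and the adjacency and midpoint relations in the definition of a catenation are preserved, up to the relabeling of indices that the definition explicitly permits: read left to right, the $\rho$-image of $(J_k,I_k,\ldots,J_1,I_1)$ is $(I_1',J_1',\ldots,I_k',J_k')$, a catenation in the sense of Lemma~\ref{Lem:catenations1}. Applying that lemma to $w=a_1b_1\cdots a_kb_k$ and this catenation, with the same $\epsilon$, yields actions $\sigma_i'\col A\to\textup{Homeo}^+(I_i')$, $\tau_i'\col B\to\textup{Homeo}^+(J_i')$ and $\nu'=\langle\sigma',\tau'\rangle$, where $\sigma'=\circledast_i\sigma_i'$ and $\tau'=\circledast_i\tau_i'$, such that $\inf(J_1')\cdot\nu'(w)\in(\sup(J_k')-\epsilon,\sup(J_k'))$; here $\inf(J_1')=\rho(\sup(J_1))=\rho(x)$ and $\sup(J_k')=\rho(\inf(J_k))=\rho(y)$.

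Then I would conjugate back: set $\sigma_i(g)\defeq\rho\,\sigma_i'(g)\,\rho^{-1}$ on $I_i=\rho^{-1}(I_i')$, and likewise $\tau_i$ on $J_i$, so that with $\sigma=\circledast_i\sigma_i$, $\tau=\circledast_i\tau_i$ and $\nu=\langle\sigma,\tau\rangle$ one has $\nu(g)=\rho\,\nu'(g)\,\rho^{-1}$ for all $g\in A\star B$; indeed, conjugation by a homeomorphism commutes with forming diagonal-products and with the free-product generation $\langle-,-\rangle$, and it preserves the right-action axioms since the paper's composition convention is left-to-right. Consequently $x\cdot\nu(w)=\rho^{-1}\big(\rho(x)\cdot\nu'(w)\big)$, and since $\rho^{-1}=\rho$ is orientation-reversing it carries $(\rho(y)-\epsilon,\rho(y))$ onto $(y,y+\epsilon)$; hence $x\cdot\nu(w)\in(y,y+\epsilon)$, as required. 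The ``moreover'' clause for words $w=a_1b_1\cdots a_k$ follows in exactly the same way from the corresponding ``moreover'' clause of Lemma~\ref{Lem:catenations1}. There is no genuine obstacle here; the only step requiring care is the bookkeeping of how the reflection reverses the index order of the catenation, interchanges $\inf\leftrightarrow\sup$, and turns $(y-\epsilon,y)$ into $(y,y+\epsilon)$.
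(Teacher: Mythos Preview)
Your proposal is correct and follows exactly the approach the paper indicates: the paper gives no proof at all beyond the remark that this lemma ``is essentially a `mirror image' version of the previous lemma, whose proof is the same via conjugating by an orientation-reversing isometry of $\R$,'' and your argument spells out precisely that conjugation, with the correct bookkeeping on how the reflection reverses the catenation, swaps $\inf\leftrightarrow\sup$, and carries $(\rho(y)-\epsilon,\rho(y))$ to $(y,y+\epsilon)$. The only point worth noting explicitly is that conjugating an element of $\textup{Homeo}^+(I_i')$ by the orientation-reversing $\rho$ again yields an orientation-preserving homeomorphism (two reversals cancel), so the resulting $\sigma_i,\tau_i$ genuinely land in $\textup{Homeo}^+$; this is implicit in your argument but harmless to state.
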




We are now ready to provide the final proof of this section.

\begin{proof}[Proof of Proposition \ref{Lem:StackingsSimple}]
The proof consists of two cases.
\begin{enumerate}
\item[{\bf Case $1$}:] $w_1=a_1b_1\ldots a_k$ for some $1\leq k\leq n$.
\item[{\bf Case $2$}:] $w_1=a_1b_1\ldots a_kb_k$ for some $1\leq k< n$.
\end{enumerate}

The proof strategy in both cases is to construct an action $\sigma:G\to \textup{Homeo}^+(\R)$ for which there is a closed interval
$I$ such that 
$$I\cdot \sigma(w)\subseteq I,\qquad (I\cdot \sigma(w_1))\cap I=\emptyset.$$
The former fact implies that $w$ admits a fixed point in $I$ due to the intermediate value theorem, and the latter fact implies that $w_1$ does not fix any point in $I$. Upon choosing $x$ to be the $w$-fixed point in $I$, this will finish the proof. In the second case, we will have to deal with more technicalities than the first, and that is the only instance where we use the fact that $w$ is not a proper power.

{\bf Proof of Case $1$}: 
The proof in this case is similar to that of Example~\ref{example: simplest case} considered above.
Consider a catenation (see Figure~\ref{fig: arrangement})
$$(I_1,J_1,\ldots ,I_n,J_n, L_1,M_1,\ldots, M_{k-1},L_k, J_n',I_n',\ldots,J_1',I_1').$$ 
We remark that the choice of indices is deliberate.
We let: 
$$x_1=\inf(J_1),\qquad x_2=\sup(J_1'),$$ 
$$y_1=\sup(J_n),\qquad y_2=\inf(J_n'),$$ 
$$z_1=y_1-\frac{1}{4},\qquad z_2=y_2+\frac{1}{4}.$$
See Figure~\ref{fig: arrangement}. Note that $z_1\in int(J_n\cap L_1)$, $z_2\in int(J_n'\cap L_k)$. 

Applying Lemmas~\ref{Lem:catenations1} and \ref{Lem:catenations2}, we choose faithful group actions
$$\nu_i:A\to \textup{Homeo}^+(I_i),\qquad \nu_i':A\to \textup{Homeo}^+(I_i'),\qquad 1\leq i\leq n,$$
$$\tau_i:B\to \textup{Homeo}^+(J_i),\qquad \tau_i':B\to \textup{Homeo}^+(J_i'),\qquad 1\leq i\leq n,$$
$$\delta_i:A\to \textup{Homeo}^+(L_i),\qquad 1\leq i\leq k,$$ $$\pi_i:B\to \textup{Homeo}^+(M_i),\qquad 1\leq i\leq k-1,$$
and build the action 
$$\sigma:G\to \textup{Homeo}^+(\R),\qquad \sigma=\langle \eta,\psi\rangle,$$ 
where 
$$\eta=(\circledast_{1\leq i\leq n}\nu_i)\circledast (\circledast_{1\leq i\leq k}\delta_i)\circledast (\circledast_{1\leq j\leq n}\nu_i'),$$
$$\psi=(\circledast_{1\leq i\leq n}\tau_i) \circledast (\circledast_{1\leq i\leq k-1}\pi_i)\circledast (\circledast_{1\leq j\leq n}\tau_i'),$$ 
and which satisfies  (see Figure~\ref{fig: arrangement})
\begin{enumerate}
\item $x_1\cdot \sigma(a_1b_1\ldots a_nb_n)\in (z_1,y_1)$.
\item $x_2\cdot \sigma(a_1b_1\ldots a_nb_n)\in (y_2,z_2)$.
\item $z_1\cdot \sigma(a_1b_1\ldots a_k)>z_2$.
\end{enumerate}

\begin{figure}
	\labellist
	\small \hair 2pt
	\pinlabel $I_1$ at 30 35
    \pinlabel $J_1$ at 60 -5
    \pinlabel $x_1$ at 22 10
    \pinlabel {$\sigma(a_1b_1\cdots a_n b_n)$} at 120 -7
    \pinlabel $J_n$ at 180 -5
    \pinlabel $z_1$ at 188 27
    \pinlabel $y_1$ at 218 10

    \pinlabel $L_1$ at 210 45
    \pinlabel {$\sigma(a_1b_1\cdots a_k)$} at 262 50
    \pinlabel $L_k$ at 315 45
    
    \pinlabel $I_1'$ at 525 35
    \pinlabel $J_1'$ at 495 -5
    \pinlabel $x_2$ at 534 10
    \pinlabel $J_n'$ at 345 -5
    \pinlabel {$\sigma(a_1b_1\cdots a_n b_n)$} at 420 -7
    
    \pinlabel $y_2$ at 307 10
    \pinlabel $z_2$ at 327 27

    \endlabellist
	\centering
	\includegraphics[scale=0.75]{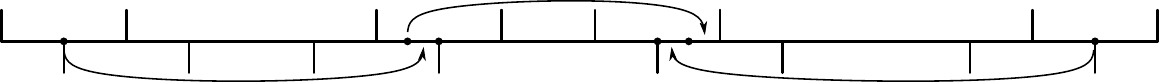}
	\caption{The arrangement for Case $1$ in the Proof of Proposition~\ref{Lem:StackingsSimple}.}\label{fig: arrangement}
\end{figure}

Let $I=[z_1,z_2]$.
We claim that the interval $I$ satisfies the required conditions.
From the first two parts and the fact that $x_1<z_1<y_1<y_2<z_2<x_2$, it follows that
$$I\cdot \sigma(a_1b_1\ldots a_nb_n)\subseteq I,$$
and hence $I$ contains a fixed point for the element $a_1b_1\ldots a_nb_n$.
However, the third part ensures that
$$(I\cdot \sigma(a_1b_1\ldots a_k))\cap I=\emptyset,$$
and hence no point in $I$ is fixed by $a_1b_1\ldots a_k$.
This settles the proof for this case.

{\bf Proof of Case $2$}: First we claim that we can assume without generality that $b_k\neq b_n$. Suppose $b_k=b_n$, then by Lemma~\ref{lemma: conjugation for systems}, it suffices to solve the $b_k^{-1}$-conjugate system, which has one equation $y\cdot (b_nwb_n^{-1})=y$ and one inequation $y\cdot (b_kw_1b_k^{-1})\neq y$. Note that
$$b_nwb_n^{-1}=b_na_1b_1\ldots a_n,\quad \text{ and}\quad b_kw_1b_k^{-1}=b_ka_1b_1\ldots a_k,$$
and the latter is again a proper prefix of the former and of even length. Next we check if $a_n=a_k$ and continue this process. 
If this never halts, then there is a subword $u$ whose length $|u|=\gcd(|w|,|w_1|)$ such that $w$ and $w_1$ are both powers of $u$, contradicting our assumption that $w$ is not a proper power.
Thus up to swapping the factors $A$ and $B$, we may assume without loss of generality that $b_k\neq b_n$.

From now on, assume $b_k\neq b_n$. Consider the catenation 
$$(I_1,J_1,\ldots ,I_n,J_n, L_1,M_1,\ldots, M_{k-1},L_k, J_n',I_n',\ldots,J_1',I_1').$$ 
We let
$$x_1=\inf(J_1),\qquad x_2=\sup(J_1'),\qquad y_1=\sup(J_n)-\frac{1}{2},\qquad y_2=\sup(J_n),$$ 
$$p_1=\inf(J_n'),\qquad p_2=\sup(J_n').$$ 
See Figure~\ref{fig: arrangement2}. Applying Lemmas \ref{Lem:catenations1} and \ref{Lem:catenations2}, we choose faithful groups actions:
$$\nu_i:A\to \textup{Homeo}^+(I_i),\qquad \nu_i':A\to \textup{Homeo}^+(I_i'),\qquad 1\leq i\leq n,$$
$$\tau_i:B\to \textup{Homeo}^+(J_i),\qquad 1\leq i\leq n,$$
$$\tau_i':B\to \textup{Homeo}^+(J_i'),\qquad 1\leq i<n.$$
We emphasize the occurrence of $i<n$ above. In particular, we will choose $\tau_n'$ at a later stage. Also choose the following actions using Lemmas \ref{Lem:catenations1} and \ref{Lem:catenations2}
$$\delta_i:A\to \textup{Homeo}^+(L_i),\qquad 1\leq i\leq k,$$ 
$$\pi_i:B\to \textup{Homeo}^+(M_i),\qquad 1\leq i\leq k-1,$$
and build the action 
$$\sigma':G\to \textup{Homeo}^+(\R)$$
given by $\sigma'=\langle \eta, \psi\rangle$, where
$$\eta=(\circledast_{1\leq i\leq n}\nu_i)\circledast (\circledast_{1\leq i\leq k}\delta_i)\circledast (\circledast_{1\leq i\leq n}\nu_i'),$$
$$\psi=(\circledast_{1\leq i\leq n}\tau_i)\circledast (\circledast_{1\leq i\leq k-1}\pi_i)\circledast (\circledast_{1\leq i< n}\tau_i'),$$ 
and which satisfies (see Figure~\ref{fig: arrangement2}):
$$x_1\cdot \sigma'(a_1b_1\ldots a_nb_n)\in (y_1,y_2),$$ 
$$u_1\defeq y_1\cdot \sigma'(a_1b_1\ldots a_k)\in (p_1,p_2),$$
$$u_2\defeq x_2\cdot \sigma'(a_1b_1\ldots a_n)\in (p_1,p_2).$$
Note that $u_1<u_2$, since the midpoint of $J_n'$ lies in $(u_1,u_2)$.
Using Lemma \ref{lem: LOfact2}, we construct an action $\tau_n':B\to \textup{Homeo}^+(J_n')$ such that (see the top of Figure~\ref{fig: arrangement2}): $$v_2=u_2\cdot \tau_n'(b_n)<u_1\cdot \tau_n'(b_k)=v_1$$

\begin{figure}
	\labellist
	\small \hair 2pt
	\pinlabel $I_1$ at 30 35
    \pinlabel $J_1$ at 60 -5
    \pinlabel $x_1$ at 22 10
    \pinlabel {$\sigma'(a_1b_1\cdots a_n b_n)$} at 120 -7
    \pinlabel $J_n$ at 180 -5
    \pinlabel $y_1$ at 188 27
    \pinlabel $y_2$ at 218 10

    \pinlabel $L_1$ at 210 45
    \pinlabel {$\sigma'(a_1b_1\cdots a_k)$} at 262 50
    \pinlabel $L_k$ at 315 45
    
    \pinlabel $I_1'$ at 525 35
    \pinlabel $J_1'$ at 495 -5
    \pinlabel $x_2$ at 534 10
    \pinlabel $J_n'$ at 345 -5
    \pinlabel {$\sigma'(a_1b_1\cdots a_n)$} at 420 -7
    
    \pinlabel $p_1$ at 307 10
    \pinlabel $u_1$ at 330 10
    \pinlabel $u_2$ at 360 27
    \pinlabel $p_2$ at 380 27

    \pinlabel $J'_n:$ at 245 95
    \pinlabel $p_1$ at 275 95
    \pinlabel $u_1$ at 303 85
    \pinlabel $p_2$ at 415 95
    \pinlabel $u_2$ at 388 103
    \pinlabel $v_2$ at 328 85
    \pinlabel $v_1$ at 363 105
    \pinlabel $\tau'_n(b_n)$ at 363 75
    \pinlabel $\tau'_n(b_k)$ at 330 115
    \endlabellist
	\centering
	\includegraphics[scale=0.75]{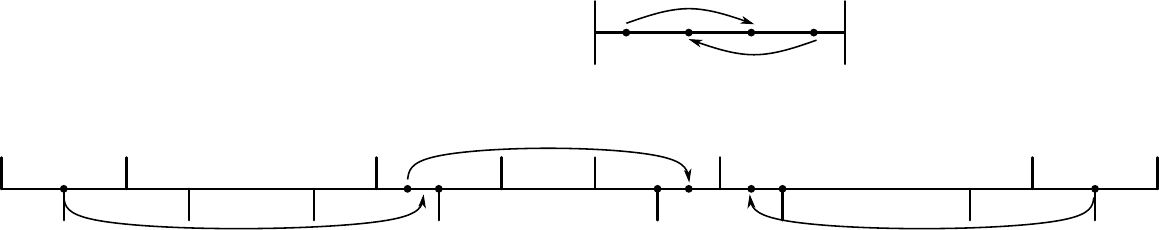}
	\caption{The arrangement for Case $2$ in the Proof of Proposition~\ref{Lem:StackingsSimple}, where the crucial interval $J'_n$ has been magnified. The positions of $v_1,v_2$ relative to $u_1,u_2$ are irrelevant, and here we show one possibility.}\label{fig: arrangement2}
\end{figure}

Since the action $\psi$ pointwise fixes $J_n'$ by definition, we can define $\psi'=\psi\circledast \tau_n'$.
We then define $\sigma=\langle \eta, \psi'\rangle$.
We claim that our construction satisfies the requirements for $I=[y_1,v_2]$.
To see this, note that 
$$x_2\cdot\sigma(a_1b_1\ldots a_nb_n)=u_2\cdot\sigma(b_n)=u_2\cdot\tau'_n(b_n)=v_2,$$
so
$$(x_1,x_2)\cdot \sigma(a_1b_1\ldots a_nb_n)\subset (y_1,v_2).$$
Since $I=[y_1,v_2]\subset (x_1,x_2)$, we deduce that
$$I\cdot \sigma(a_1b_1\ldots a_nb_n)\subseteq I.$$
Moreover, $$y_1\cdot \sigma(a_1b_1\ldots a_kb_k)=u_1\cdot \sigma(b_k)=u_1\cdot \tau_n'(b_k)=v_1> v_2.$$
Therefore, it follows that 
$$(I\cdot \sigma(a_1b_1\ldots a_kb_k))\cap I=\emptyset.$$
So the interval $I=[y_1,v_2]$ contains a $\sigma(a_1b_1\ldots a_nb_n)$-fixed point, but no point in $I$ is fixed by $\sigma(a_1b_1\ldots a_kb_k)$.
This settles the proof for this case.
\end{proof}

\begin{remark}
Let $A,B,w$ be as in the statement of Theorem \ref{thm:StackingsRF} (so $w$ is not a proper power).
Using the methods in our section, we can actually show something much stronger. The right action of $G=A\star B$ on the set of right cosets of $H=\langle w\rangle$ in $G$ admits a $G$-invariant order.
This means that the group $H$ is \emph{right relatively convex} in $G$. Equivalently, $G$ admits an action $\sigma:G\to \textup{Homeo}^+(\R)$ and a point $x\in \R$ such that the stabilizer of 
$x$ in $G$ is precisely $H$. Let $\{\alpha_n\mid n\in \N\}$ be an enumeration of a system of coset representatives of the nontrivial right cosets of $H$ in $G$.
Then using an argument similar to the one above, for each $i\in \N$ we can find an action $\sigma_i:G\to \textup{Homeo}^+(\R)$ and a point $x_i\in \R$ such that $x_i\cdot \sigma(w)=x_i$ and $x_i\cdot \sigma_i(\alpha_i)\neq x_i$ (we call this system $\Lambda_i$). Then applying the argument presented in Remark \ref{InfiniteSystem} which finds a simultaneous solution for the countably infinite set of systems $\{\Lambda_i\mid i\in\N\}$, we find the required action.
\end{remark}

\section{Proof of the main results}\label{sec: proof of main results}
Now we have proved the two main ingredients, Theorems~\ref{thmA: spectral gap from stacking} and \ref{thmA: Stacking}. Using them, we first deduce Theorem~\ref{thmA: spectral gap theorem}.
\begin{proof}[Proof of Theorem~\ref{thmA: spectral gap theorem}]
    Here $S$ has no $2$-sphere or disk components, so $-\chi^-(S)=-\chi(S)$.
    
    First consider the case where $w$ is not a proper power.
    All notions involved are preserved by conjugation, and since $w$ is not conjugate into $A$ or $B$, we may assume that $w$ is expressed as a cyclically reduced word. Then by Theorem~\ref{thmA: Stacking}, there is a relative stacking for $w$, and hence by Theorem~\ref{thmA: spectral gap from stacking} we have $-\chi^-(S)\ge \deg(S)$. If $A$ or $B$ are uncountable, pass to the countable subgroups $A'$ and $B'$ containing the letters in $w$, then they are right-orderable since they are subgroups of $A$ and $B$ respectively, so there is a relative stacking for $w\in G'=A'\star B'$, and the result follows using Theorem~\ref{thm: spectral gap from stacking} instead of Theorem~\ref{thmA: spectral gap from stacking}.
    
    Now suppose $w=u^k$ is a proper power, where $u\in G$ is not a proper power and $k\ge2$. Note that $u$ is not conjugate into $A$ or $B$ since $w$ is not, so the conclusion holds for $u$. 
    Any $w$-admissible surface $S$ of degree $d=\deg(S)$ is naturally also a $u$-admissible surface of degree $kd$, and the notion of boundary-incompressibility is preserved.
    From what we have shown for $u$, we obtain $-\chi^-(S)\ge kd=k\deg(S)\ge \deg(S)$.
\end{proof}

Finally, we deduce Theorem~\ref{thmA: main} from Theorem~\ref{thmA: spectral gap theorem}.
\begin{proof}[Proof of Theorem~\ref{thmA: main}]
    We first show the second assertion, namely, for any $w\in G$ not conjugate into $A$, the natural map $A\inj A\star B$ induces an \emph{injection} $A\inj (A\star B)/\llangle w \rrangle$. 
    This is obvious if $w$ is conjugate to some $b\in B$ since $(A\star B)/\llangle w \rrangle= A\star (B/\llangle b\rrangle)$. So we assume $w$ is not conjugate into $B$ either. 
    
    Suppose the map is not injective, namely, there is an $a\in A\setminus \{id\}$ such that $a\in \llangle w\rrangle$. Then by Example~\ref{example: equations in groups}, there are equations of the form (\ref{eqn: group equation}), and we consider one with minimal length $k\ge1$. As explained in Example~\ref{example: equations in groups}, this gives rise to a boundary-incompressible $w$-admissible surface $S$ which is a sphere with $k+1$ boundary components and of degree 
    $$\deg(S)=\sum_{i=1}^k |n_i|\ge k.$$
    Note that $-\chi^-(S)=-\chi(S)=(k+1)-2=k-1$.
    By Theorem~\ref{thmA: spectral gap theorem}, we get
    $$k-1=-\chi^-(S)\ge \deg(S)\ge k,$$
    which is a contradiction.

    This implies that $w$ does not normally generate $A\star B$ when $w$ is not conjugate into $A$. If $w$ is conjugate to some $a\in A$, then $(A\star B)/\llangle w \rrangle= (A/\llangle a\rrangle)\star B$, which is nontrivial. Hence $A\star B$ has normal rank greater than $1$ for any $A$ and $B$ right-orderable and nontrivial.
\end{proof}

With essentially the same proof, we deduce the following:
\begin{corollary}\label{cor: torsion}
    If $A$ and $B$ are right-orderable, and $u\in G=A\star B$ is not conjugate into $A$ or $B$ and is not a proper power, then for any $k\ge2$, the image of $u$ in $G/\llangle u^k \rrangle$ has order $k$.
\end{corollary}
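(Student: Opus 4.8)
The plan is to follow the proof of Theorem~\ref{thmA: main}, using the spectral gap estimate of Theorem~\ref{thmA: spectral gap theorem} to rule out the powers of $u$ that would make the order of its image smaller than $k$. Since $u^k\in\llangle u^k\rrangle$, the image $\bar u$ of $u$ in $G/\llangle u^k\rrangle$ satisfies $\bar u^k=id$, so its order divides $k$; it therefore suffices to prove that $u^m\notin\llangle u^k\rrangle$ for every $m$ with $0<m<k$. So I would assume for contradiction that $u^m\in\llangle u^k\rrangle$ for such an $m$ and write an equation $u^m=\prod_{i=1}^{\ell}(g_i u^{kn_i}g_i^{-1})$ with $g_i\in G$ and $n_i\in\Z\setminus\{0\}$. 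By Example~\ref{example: equations in groups} applied with $w=u$ (legitimate since $u$ is not conjugate into $A$ or $B$), this equation gives a $u$-admissible surface $S_0$: a connected genus-zero surface with $\ell+1$ boundary components, all of them $u$-boundary, representing the conjugacy classes of $u^{-m}$ and of the $u^{kn_i}$.

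The one ingredient beyond the proof of Theorem~\ref{thmA: main} is to keep track of the \emph{signed degree} $\Sigma(S)\defeq\sum_j q_j\in\Z$ of a $u$-admissible surface $S$ whose $u$-boundary components represent $u^{q_1},\dots,u^{q_r}$. For $S_0$ we have $\Sigma(S_0)=-m+k\sum_i n_i\equiv -m\not\equiv 0\pmod k$. The key point is that $\Sigma$ is invariant under the boundary compression of Remark~\ref{rmk:boundary-compression}: that operation replaces two $u$-boundary components of signed degrees $n$ and $-m'$ (with $m',n\in\Z_+$) by a single one of signed degree $n-m'$, or removes both and caps off with a disk when $n=m'$, so $\sum_j q_j$ is unchanged. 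Hence, if $S_0$ is boundary-compressible, I would compress it repeatedly; each compression strictly decreases the number of $u$-boundary components and keeps the surface connected and of genus zero (a subsurface of a planar surface is planar, and capping a hole with a disk preserves planarity). Since $\Sigma$ stays congruent to $-m\not\equiv0\pmod k$, it is never $0$, so the surface can never lose \emph{all} of its $u$-boundary and therefore stays $u$-admissible throughout; and since the number of $u$-boundary components strictly decreases, the process terminates at a boundary-incompressible, connected, genus-zero $u$-admissible surface $S^\ast$ with $r\ge1$ boundary components. In fact $r\ge2$: if $r=1$ then $S^\ast$ is a disk whose single boundary represents $u^q$ with $q\neq0$, forcing $u^q=id$ and contradicting that $u$ has infinite order.

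Finally I would apply Theorem~\ref{thmA: spectral gap theorem} to $S^\ast$, whose hypotheses hold ($A$ and $B$ are right-orderable, $u$ is not conjugate into $A$ or $B$, and $S^\ast$ has no sphere or disk component as it is connected with $r\ge2$ boundary components), obtaining $-\chi(S^\ast)\ge\deg(S^\ast)$. But a connected genus-zero surface with $r$ boundary components has $-\chi(S^\ast)=r-2$, whereas $\deg(S^\ast)=\sum_j|q_j|\ge r$ since each of the $r$ boundary components is a $u$-boundary of degree at least $1$. This forces $r-2\ge r$, a contradiction, which finishes the argument. The step I expect to demand the most care is the compression bookkeeping in the middle paragraph — verifying that repeatedly simplifying $S_0$ cannot destroy its $u$-admissibility — for which the mod-$k$ invariance of the signed degree is exactly the needed device.
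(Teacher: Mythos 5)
Your proposal is correct and follows essentially the same strategy as the paper: form a genus-zero $u$-admissible surface from the equation and apply Theorem~\ref{thmA: spectral gap theorem} to a boundary-incompressible representative to derive a contradiction from the Euler characteristic bound. The only cosmetic difference is that the paper secures boundary-incompressibility by minimizing $\ell$ over \emph{all} residues $m\not\equiv 0\pmod k$, whereas you compress explicitly and invoke the invariance of the signed degree mod $k$ to guarantee the surface never loses its $u$-boundary — this is the same modular-arithmetic device, just spelled out rather than packaged in a minimality claim.
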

\begin{proof}
    Denote by $\bar{u}$ the image of $u$ in  $G/\llangle u^k \rrangle$. Clearly $\bar{u}^k=id$. It remains to show $u^m\notin \llangle u^k \rrangle$ whenever $m$ is not divisible by $k$.
    
    Suppose there is such $m$ with $u^m\in \llangle u^k \rrangle$, then we can get an equation
    $$
        u^m= (g_1 u^{kn_1} g_1^{-1})(g_2 u^{kn_2} g_2^{-1})\cdots (g_\ell u^{kn_\ell} g_\ell^{-1}),
    $$
    where $\ell\in \Z_+$, each $g_i\in G$ and $n_i\neq0\in\Z$ for all $1\le i\le \ell$.
    This yields a surface map $f:S\to X$ for $X=K(G,1)$ such that $S$ is a sphere with $\ell+1$ boundary components representing $u^m, u^{kn_1}, \ldots, u^{kn_\ell}$ respectively. This is a $u$-admissible surface of degree 
    $$\deg(S)=|m|+k\sum_{i=1}^\ell|n_i|\ge k\ell\ge \ell.$$

    Among all choices of $m$ not divisible by $k$ and all equations of the form above, choose one with minimal length $\ell\ge1$. Then the corresponding $S$ is boundary-incompressible by minimality. Thus by Theorem~\ref{thmA: spectral gap theorem}, we have
    $$\ell-1=-\chi(S)\ge \deg(S)\ge \ell,$$
    which gives a contradiction.
\end{proof}

Under the stronger assumption that $A$ and $B$ are locally indicable, Howie \cite{Howie_LocInd} showed that $A\star B/\llangle w\rrangle$ is locally indicable if and only if it is torsion-free and if and only if $w$ is not a proper power. It is natural to ask for an analog, replacing local indicability by right-orderability, and Corollary~\ref{cor: torsion} gives a partial implication.
\begin{question}
    If $A$ and $B$ are right-orderable, and $w\in G=A\star B$ is not conjugate into $A$ or $B$ and is not a proper power. Is $G/\llangle w\rrangle$ torsion-free? Is it right-orderable?
\end{question}

\section{Applications to Dehn surgery}\label{sec: Dehn surgery}
Now we give the detailed proof of Theorem~\ref{thmA: Dehn surgery}. The construction of such manifolds was pointed out to us by Nathan Dunfield, who also explained the fact that they admit degree-one maps to a reduced manifold of the form in Corollary~\ref{cor: connected sum}.

\begin{construction}\label{construction}
    Let $M=\#_{i=1}^k M_i$ be the connected sum of $k\ge2$ closed oriented $3$-manifolds $M_i$ so that each $\pi_1(M_i)$ is nontrivial and left-orderable. Fix any null-homotopic hyperbolic knot $K$ in $M$, which exists by \cite[Prop 4.2]{BoileauWang}. 
    Let $M_K(\gamma)$ be the result of applying a Dehn surgery with slope $\gamma$ for some simple loop $\gamma$ on the boundary of a solid torus neighborhood $N(K)$ of $K$.
\end{construction}

There are many manifolds $M_i$ meeting the requirement in the construction above, including infinitely many $\Z$-homology spheres, say by \cite[Proposition 1(2)]{LSpace}.

The Dehn surgery $M_K(\gamma)$ only depends on $[\gamma]\in H_1(T;\Z)$, where $T$ is the boundary torus of $N(K)$. Usually one chooses a basis $[\mu],[\lambda]$ for $H_1(T;\Z)$ represented by the meridian $\mu$, which is the unique loop up to homotopy that bounds a disk in the solid torus $N(K)$, and by the longitude $\lambda$, which is some loop such that $[\mu]\cap[\lambda]=1$. There is a $\Z$-worth of choices for $\lambda$, but when $M$ is a $\Z$-homology sphere, there is a unique choice of $\lambda$ so that it is null-homologous in $M\setminus N(K)$. So when $M$ is a $\Z$-homology sphere, we write $M_K(p/q)=M_K(\lambda)$ if
$[\gamma]=p[\mu]+q[\lambda]$ for some coprime integers $p,q\in\Z$, where $p/q\in\Q\cup\{\infty\}$. Then $M_K(1/n)$ is again a $\Z$-homology sphere for any $n\in\Z$. We record this together with another well-known fact.

\begin{lemma}\label{lemma: surgery facts}
    If each $M_i$ is a $\Z$-homology sphere in Construction \ref{construction}, then $M_K(1/n)$ is a $\Z$-homology sphere for any $n\in\Z$. Moreover, if $K$ is a hyperbolic knot in $M$, then for $n$ sufficiently large, $M_K(1/n)$ is hyperbolic and there are infinitely different homeomorphism classes as $n$ varies.
\end{lemma}
\begin{proof}
    We have explained the first fact above. 
    The second fact follows from Thurston's hyperbolic Dehn surgery theorem \cite{Thurstonnotes}: All but finitely many choices of the slopes $p/q$ yield hyperbolic manifolds $M_K(p/q)$, and the hyperbolic volume converges to the hyperbolic volume of $M\setminus N(K)$ as $p^2+q^2\to\infty$.
    The hyperbolic volume of $M_K(p/q)$ is a topological invariant by the Mostow rigidity.
\end{proof}

\begin{lemma}\label{lemma: M' normal rank}
    For each $M'\defeq M_K(\gamma)$ in Construction \ref{construction}, there is a degree-one map $f:M'\to M$. In particular, the normal rank of $\pi_1(M')$ is at least $2$.
\end{lemma}
\begin{proof}
    Any degree-one map of closed orientable manifolds induces a surjection of fundamental groups, so the normal rank of $\pi_1 M'$ is no less than that of $\pi_1 M$, which is at least $2$ by Corollary~\ref{cor: connected sum}. 
    
    The existence of a degree-one map is due to \cite[Prop 3.2]{BoileauWang}, which we restate below removing an unnecessary assumption on the irreducibility of $M$.
\end{proof}

\begin{proposition}[{Boileau--Wang \cite[Prop 3.2]{BoileauWang}}]
    Let $K$ be a null-homotopic knot in a closed orientable connected manifold $M$, and let $M'=M_K(\gamma)$ be the result of a Dehn surgery along $K$. Then there is a degree-one map $f:M'\to M$.
\end{proposition}
\begin{proof}
    We sketch their construction of the degree-one map to explain why irreducibility of $M$ is unnecessary. This is literally a summary of their original proof.

    The fact that $K$ is null-homotopic implies that it bounds a (singular) disk $\varphi: D^2\to M$ whose image $D=\varphi(D^2)$ has the homotopy type of a graph, and so does a small neighborhood $N(D)$ of $D$. It follows that for any loop $\gamma$ on a small solid torus neighborhood $N(K)\subset N(D)$, there is a null-homotopy of $K$ supported inside $N(D)$.

    Remove from $M$ the small open neighborhood $N(K)\subset N(D)$ of $K$ and denote the result as $\widehat{M}$. Then $M'=\widehat{M}\cup (S^1\times D^2)$. Define $f|_{\widehat{M}}=id_{\widehat{M}}$, so once we extend it to a continuous map $f:M'\to M$ it must have degree one.

    To extend $f$ over $S^1\times D^2$, say $\{1\}\times D^2$ bounds the surgery slope $\gamma$, we first extend $f$ over $\{1\}\times D^2$ as a null-homotopy supported inside $N(D)$. Then we extend to the rest of the solid torus using the fact that $N(D)$ has trivial $\pi_2$, since it has the homotopy type of a graph.
\end{proof}

In particular, by Lemma~\ref{lemma: M' normal rank}, any manifold $M_K(\gamma)$ from Construction~\ref{construction} cannot be obtained by a Dehn surgery along a knot in $S^3$. Actually, as Tye Lidman explained to us, one can replace $S^3$ by $S^2\times S^1$ or any lens space using the argument below.
\begin{proposition}\label{prop: cobordism}
    If $Y^3$ is a $\Z$-homology sphere such that $\pi_1 Y$ has normal rank at least $2$, then $Y$ cannot be obtained by a Dehn surgery along any knot in $S^3$, $S^2\times S^1$ or any lens space.
\end{proposition}
\begin{proof}
    Suppose $Y$ can be obtained by a Dehn surgery along a knot $K$ in $X$, where $X$ is a $3$-manifold with abelian fundamental group. This includes the cases of $X$ being $S^3$, $S^2\times S^1$ and lens spaces. Let $Z$ be the complement of an open tubular neighborhood of $K$, and denote its torus boundary by $T=\partial Z$. Then this means that there are two Dehn fillings along loops $\alpha,\beta\subset T$ that result in $X$ and $Y$ respectively. So 
    $$\pi_1 X=\pi_1 Z/\llangle \alpha\rrangle\quad \text{and}\quad \pi_1 Y=\pi_1 Z/\llangle \beta\rrangle.$$
    Let $\gamma$ be a loop in $T$ such that $\{\beta,\gamma\}$ form a basis of $\pi_1 T\cong\Z^2$.
    Then 
    $$\pi_1 Z/\llangle\pi_1T\rrangle=\pi_1 Z/\llangle \beta,\gamma \rrangle=\pi_1 Y/\llangle\bar{\gamma}\rrangle,$$
    where $\bar{\gamma}$ is the image of $\gamma$ in $\pi_1 Y$.
    In particular, $\pi_1 Z/\llangle\pi_1T\rrangle$ must be perfect, since it is a quotient of a perfect group $\pi_1 Y$.
    On the other hand, $\pi_1 Z/\llangle\pi_1T\rrangle$ is also a quotient of $\pi_1 X=\pi_1 Z/\llangle \alpha\rrangle$, so it must be abelian.
    A perfect abelian group must be trivial, so 
    $$1=\pi_1 Z/\llangle\pi_1T\rrangle=\pi_1 Y/\llangle \bar{\gamma}\rrangle,$$
    contradicting the fact that $\pi_1 Y$ has normal rank at least $2$.
\end{proof}

Combining the results above we obtain the following theorem, which is a more precise version of Theorem~\ref{thmA: Dehn surgery}.
\begin{theorem}
    Given $M$ and $K$ as in Construction~\ref{construction}, where we choose each $M_i$ to be a $\Z$-homology sphere, then $M'=M_K(1/n)$ is a hyperbolic $\Z$-homology sphere for all $n$ sufficiently large, and $\pi_1(M')$ has normal rank at least $2$. 
    
    As a consequence, each such $M'$ cannot be obtained by a Dehn surgery along any knot in $S^3$, $S^2\times S^1$ or any lens space. Moreover, this gives infinitely many homeomorphism classes by varying $n$ in the family $M_K(1/n)$.
\end{theorem}
\begin{proof}
    By Lemma~\ref{lemma: surgery facts}, $M'=M_K(1/n)$ is a hyperbolic $\Z$-homology sphere for all $n$ sufficiently large, and we get infinitely many homeomorphism classes as we vary $n$. Each such $M'$ has normal rank at least $2$ by Lemma~\ref{lemma: M' normal rank}. The assertion about Dehn surgery follows from Proposition~\ref{prop: cobordism}.
\end{proof}

\section*{Acknowledgments}
The authors are grateful to Henry Wilton for suggesting us to think about a relative version of the stacking argument in \cite{LouderWilton:stacking}. 
The authors are also in debt to Doron Puder for suggesting and explaining the stacking argument of Louder--Wilton and pointing out the (different) generalization of stacking in \cite{Millard} in some helpful discussions that motivated the study which eventually led us to the argument here.
The authors are grateful to Francesco Fournier-Facio for a careful reading of an early draft of the paper, and for providing valuable feedback.
The authors also thank Denis Osin for helpful discussions.
The authors are also very grateful to Ian Agol, Nathan Dunfield, Tye Lidman, Yi Ni, and Hongbin Sun for explaining applications of our results to $3$-manifold topology.
LC is partially supported by NSF DMS-2506702. YL is partially supported by NSF DMS-2552707.

\bibliographystyle{alpha}
\bibliography{wiegold}

\end{document}